\definecolor{darkblue}{rgb}{0.13,0.13,0.39}
\definecolor{darkpurple}{RGB}{102,0,102}
\newtheorem{theorem}{Theorem}[section]
\newtheorem{lemma}[theorem]{Lemma}
\newtheorem{proposition}[theorem]{Proposition}
\newtheorem{corollary}[theorem]{Corollary}
\theoremstyle{remark}
\newcommand{\rd}[1] {\mathrm{d}#1}
\newcommand{\dd}[2] {\mathrm{d}#1\mathrm{d}#2}
\newcommand{\W}[2]{W(\mathrm{d}#1,\mathrm{d}#2)}
\newcommand{\mW}[3]{W^{\otimes #1}(\mathrm{d}#2,\mathrm{d}#3)}
\newcommand{\db}[1] {\mathrm{d}\mathbf{#1}}
\newcommand{\mb}[1] {\mathbf{#1}}
\newcommand{\R}{\mathbb{R}}
\newcommand{\E}{\mathbb{E}}
\newcommand{\x}{\mathbf{x}}
\newcommand{\y}{\mathbf{y}}
\newcommand{\z}{\mathbf{z}}
\newcommand{\sP}{\mathscr{P}}
\newcommand{\1}{\mathbf{1}}
\newcommand{\V}{\Vert}
\newcommand{\p}{\prime}
\tikzset{
  addarrows/.style={
    decorate,
    decoration={markings,
    mark=at position 2cm with {\arrowreversed[line width=1pt]{stealth}},
    mark=at position 6cm with {\arrowreversed[line width=1pt]{stealth}},
    mark=at position 10cm with {\arrowreversed[line width=1pt]{stealth}},
    mark=at position 14cm with {\arrowreversed[line width=1pt]{stealth}},
    mark=at position 18cm with {\arrowreversed[line width=1pt]{stealth}},
    mark=at position 22cm with {\arrowreversed[line width=1pt]{stealth}}
  }},
  addarrowsvert/.style={
    decorate,
    decoration={markings,
    mark=at position 2cm with {\arrow[line width=1pt]{stealth}},
    mark=at position 10cm with {\arrow[line width=1pt]{stealth}}
  }},
  cross/.style={cross out, draw, 
    minimum size=2*(#1-\pgflinewidth), 
    inner sep=0pt, outer sep=0pt}
}
\begin{document}

\title[Continuity and Strict Positivity of the multi-layer extension of the SHE]{Continuity and strict positivity of the multi-layer extension of the stochastic heat equation} 
\author[C. H. Lun]{Chin Hang Lun}
\address{C. H. Lun}
\email{chinhanglun@gmail.com}

\author[J. Warren]{Jon Warren}
\address{
  J. Warren,
  Department of Statistics,
  University of Warwick,
  Coventry,
  CV4 7AL,
  UK}
\email{j.warren@warwick.ac.uk} 
  
\maketitle
\thispagestyle{empty}

\begin{abstract}
  We prove the continuity and strict positivity of the multi-layer extension to the stochastic heat equation introduced in \cite{OW11} which form a hierarchy of partition functions for the continuum directed random polymer. 
  This shows that the corresponding free energy (logarithm of the partition function) is well defined.
  This is also a step towards proving the conjecture stated at the end of the above paper that an array of such partition functions has the Markov property.
\end{abstract}

\section{Introduction}\label{sec:intro}

In \cite{OW11} O'Connell and Warren introduced the following: for each $n = 1,2,\ldots$, $t>0$ and $x$, $y\in\R$ define
\begin{equation}
  Z_n(t,x,y) = p_t(x-y)^n \bigg(1 + \sum_{k=1}^\infty \int_{\Delta_k(t)} \int_{\R^k} R_k(\mb{s},\mb{y^\prime}; t,x,y) \;\mW{k}{\mb{s}}{\mb{y}^\prime} \bigg),
  \label{eq:ZnChaos}
\end{equation}
where $\Delta_k(t) = \{0 < s_1 < s_{2} < \cdots < s_k < t\}$, $\mb{s} = (s_1,\ldots,s_k)$, $\mb{y}^\prime = (y_1^\prime,\ldots,y_k^\prime)$ and $R_k(\mb{s}, \mb{y}^\prime ; t,x,y)$ is the $k$-point correlation function for a collection of $n$ non-intersecting Brownian bridges each of which starts at $x$ at time 0 and ends at $y$ at time $t$, see Section \ref{sec:nonIntersectingBM}.
$p_t(x-y)$ is the heat kernel $(2\pi t)^{-1/2} e^{-(x-y)^2/2t}$.
The integral is a $k$-fold stochastic integral with respect to space-time white noise, see Section \ref{sec:prelim} for the definition of such integrals.
It was shown in \cite{OW11} by considering local times of non-intersecting Brownian bridges that the infinite sum in the definition is convergent in $L^2$ with respect to the white noise.

Observe that $u = Z_1$ is the solution to the (multiplicative) stochastic heat equation (SHE) with delta initial data:
\begin{equation}
 \begin{cases}
  \partial_t u(t,x,y) = \Big( \frac{1}{2} \Delta_y + \dot{W}(t,y) \Big) u(t,x,y), \quad t\in(0,\infty), y\in\mathbb{R}, \\
  u(0,x,y) = \delta(x-y), \quad x\in\mathbb{R}.
 \end{cases}
 \label{eq:SHEDeltaX}
\end{equation}
By a solution to the above we mean a random field $u$ which satisfies almost surely the mild form 
\begin{equation}
  u(t,x,y) = p_t(x-y) + \int_0^t \int_\R p_{t-s}(y-y^\prime) u(s,x,y^\prime) \;\W{s}{y^\prime}.
  \label{eq:SHEMild}
\end{equation}
Iterating equation (\ref{eq:SHEMild}) multiple times gives the chaos expansion (\ref{eq:ZnChaos}) for $n=1$.
One can express the solution $u(t,x,y)$ in a more suggestive notation:
\begin{equation}
  u(t,x,y) = p_t(x-y) \mathbb{E}_{x,y;t}^b \bigg[ \mathscr{E}\mathrm{xp} \bigg( \int_0^t W(s,b_s) \;\rd{s} \bigg) \bigg],
  \label{eq:SHEFeymannKac}
\end{equation}
where $b$ is a Brownian bridge that starts at $x$ at time 0 and ends at $y$ at time $t$ and $\E_{x,y;t}^b$ denotes the corresponding expectation.
$\mathscr{E}\mathrm{xp}$ is the \emph{Wick exponential} defined by
\[
  \mathscr{E}\mathrm{xp}(M_t) := \exp\big( M_t - \frac{1}{2}\langle M,M\rangle_t \big),
\]
for a martingale $M$.
The Feynman--Kac formula (\ref{eq:SHEFeymannKac}) is not rigorous as it is unclear how one would define the integral of the white noise along a Brownian path and moreover to exponentiate such an expression.
However, Taylor expanding the exponential, then switching the expectation with the infinite sum and evaluating the expectation, one obtains the chaos expansion of $u$.
With this in mind, (\ref{eq:SHEFeymannKac}) can be thought of as a short hand for the chaos expansion (\ref{eq:ZnChaos}) in the case $n=1$.
On the other hand, one can obtain a rigorous expression by replacing $W$ in (\ref{eq:SHEFeymannKac}) with a smoothed version of the space-time white noise.
Indeed, Bertini and Cancrini showed in \cite{BC95} that such expression has a meaningful limit as one takes away the smoothing and that the limit solves the SHE.
With this Feynman--Kac interpretation, one can think of the solution to the stochastic heat equation as the partition function (up to a multiplication by the heat kernel) of the continuum directed random polymer \cite{AKQ12}.

Analogously, we write
\begin{equation}
  Z_n(t,x,y) = p_t(x-y)^n \mathbb{E}_{x,y;t}^X \bigg[ \mathscr{E}\mathrm{xp} \bigg( \sum_{i=1}^n \int_0^t W(s,X_s^i) \;\rd{s} \bigg) \bigg],
  \label{eq:ZnFeymannKac}
\end{equation}
where $(X_s^1,\ldots,X_s^n, 0\leq s\leq t)$ denotes the trajectories of the above mentioned collection of $n$ non-intersecting Brownian bridges and $\E_{x,y;t}^X$ is the corresponding expectation.
In the same manner as in the $n=1$ case, (\ref{eq:ZnFeymannKac}) should be thought of as the short hand for the chaos expansion (\ref{eq:ZnChaos}).
Therefore, in view of (\ref{eq:ZnFeymannKac}) one can interpret $Z_n$ as the partition function (up to a factor of the heat kernel) of a natural extension of the continuum directed random polymer involving multiple non-intersecting Brownian paths.

Since the work of Bertini and Giacomin \cite{BG97}, it is widely accepted that the logarithm of $u$ is the Cole--Hopf solution to the KPZ equation \cite{KPZ86},
\begin{equation}
  \partial_t h(t,x) = \partial_x^2 h(t,x) + \big(\partial_x h(t,x)\big)^2 + \dot{W}(t,x),
  \label{eq:KPZ}
\end{equation}
with narrow wedge initial condition.
This solution arises as the scaling limit of the corner growth model under weak asymmetry.
The Cole--Hopf solution to the KPZ equation via the Feynman--Kac formula (\ref{eq:SHEFeymannKac}) can be seen as the free energy of the continuum directed random polymer.
With this interpretation the Cole--Hopf solution can be regarded as the continuum analogue of the longest increasing subsequence of a random permutation, length of the first row of a random Young diagram, directed last passage percolation and free energy of a discrete/semi-discrete polymer in random media etc., see \cite{BDJ99}, \cite{BDJ99b}, \cite{BOO00}, \cite{Joh99}, \cite{Joh01}, \cite{PS02}, \cite{Joh03}, \cite{COSZ14} and the references therein.
In each of these discrete models, there is further structure provided either by multiple non-intersecting up-right paths on lattices, multi-layer growth dynamics or Young diagrams constructed from the RSK correspondence.
The work in the above mentioned references have shown that in some cases, utilisation of this additional structure have lead to derivations of exact formulae for the distribution of quantities of interest.
The above mentioned discrete models provide examples of what is called \emph{integrability} or \emph{exact solvability}.
The motivation for introducing the partition functions $Z_n$, which are the continuum analogue of the structures mentioned above, is that they should provide insight to the integrable structure in the continuum setting. 

The main result of this paper is that the continuum partition functions possess some nice regularity properties.

\begin{theorem}
  For all $n\geq 1$, the function $(t,x,y)\mapsto Z_n(t,x,y)$ has a version that is continuous over $(0,\infty)\times\R\times\R$. 
  Moreover,
  \[
    \mathbb{P}[Z_n(t,x,y) > 0 \text{ for all } t>0 \text{ and } x,y\in\R] = 1.
  \]
  \label{thm:ZnRegularity}
\end{theorem}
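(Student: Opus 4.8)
\emph{Strategy.} Fix throughout a compact block $K=[a,b]\times[-L,L]^2\subset(0,\infty)\times\R\times\R$. Since $(t,x,y)\mapsto p_t(x-y)^n$ is smooth and bounded between two positive constants on $K$, every assertion reduces to the corresponding one for the bracketed series in \eqref{eq:ZnChaos}. The argument splits into continuity of a version of $Z_n$, non-negativity, and finally strict positivity, the last being where the real work lies.

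\emph{Continuity.} The plan is to write the series in \eqref{eq:ZnChaos} as $1+\sum_{k\ge1}I_k(t,x,y)$, with $I_k$ the $k$-fold Wiener integral of $R_k(\cdot\,;t,x,y)$. By the Wiener isometry, $\E\big[(I_k(t,x,y)-I_k(t',x',y'))^2\big]$ is the squared $L^2(\Delta_k(t\vee t')\times\R^k)$-norm of $R_k(\cdot\,;t,x,y)-R_k(\cdot\,;t',x',y')$, and I would bound this by $C_k\,d^\alpha$ for a fixed $\alpha\in(0,1]$, where $d=|t-t'|+|x-x'|+|y-y'|$ and the $C_k$ decay factorially in $k$. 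The input is a quantitative version of the estimates of \cite{OW11}: $R_k$ is an occupation density of the family of $n$ non-intersecting Brownian bridges, so it depends on $t$ and on the (coincident) endpoints only through the law of that family, and the local-time computations that give $\sum_k\|R_k\|_2^2<\infty$ in \cite{OW11}, applied to a difference of two such laws, should yield the increment bound with factorially decaying constants. Summing over $k$ gives $\E\big[(Z_n(t,x,y)-Z_n(t',x',y'))^2\big]\le C\,d^\alpha$ on $K$. To obtain a continuous version one needs sufficiently high moments; since $I_k$ lies in the $k$-th Wiener chaos, hypercontractivity gives $\|I_k(t,x,y)-I_k(t',x',y')\|_{L^{2m}}\le(2m-1)^{k/2}\|I_k(t,x,y)-I_k(t',x',y')\|_{L^2}$, and the factorial decay of $C_k$ makes $\sum_k(2m-1)^{k/2}C_k^{1/2}<\infty$ for every $m$, so $\E\big[(Z_n(t,x,y)-Z_n(t',x',y'))^{2m}\big]\le C_m\,d^{m\alpha}$ on $K$. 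Choosing $m$ with $m\alpha>3$ and applying Kolmogorov's continuity criterion gives a version of $Z_n$ continuous on $K$; letting $K$ exhaust $(0,\infty)\times\R^2$ completes this step.

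\emph{Non-negativity.} Let $\rho_\epsilon$ be a smooth probability density on $\R$ concentrating at $0$ and put $W_\epsilon(s,y)=\int_\R\rho_\epsilon(y-z)\,\W{s}{z}$, a noise white in time and smooth in space. Set
\[
  Z_n^\epsilon(t,x,y)=p_t(x-y)^n\,\E_{x,y;t}^X\bigg[\exp\bigg(\sum_{i=1}^n\int_0^tW_\epsilon(s,X^i_s)\,\rd{s}-\tfrac12\sum_{i,j=1}^n\int_0^t(\rho_\epsilon*\rho_\epsilon)(X^i_s-X^j_s)\,\rd{s}\bigg)\bigg],
\]
a bona fide, strictly positive random field (the inner expression is the Wick exponential of the Gaussian $\sum_i\int_0^tW_\epsilon(s,X^i_s)\,\rd{s}$ conditionally on the paths). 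Expanding the exponential and taking expectations as in \cite{BC95,OW11}, $Z_n^\epsilon$ has a chaos expansion whose kernels are mollified versions of the $R_k$ and converge to them in $L^2$; hence $Z_n^\epsilon(t,x,y)\to Z_n(t,x,y)$ in $L^2$ for each fixed $(t,x,y)$. As $Z_n^\epsilon\ge0$, this gives $Z_n(t,x,y)\ge0$ almost surely for each fixed point, and then, intersecting over a countable dense set and using the continuity above, $Z_n\ge0$ everywhere, almost surely.

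\emph{Strict positivity.} This is the main obstacle, and I would attack it by importing the structure behind Mueller's strict positivity theorem for the SHE into the multi-layer setting. First extend the partition function to distinct endpoints: let $Z_n(t,\x,\y)$ be the analogue of \eqref{eq:ZnChaos} with the $n$ non-intersecting Brownian bridges running from an ordered configuration $\x=(x_1<\dots<x_n)$ to an ordered configuration $\y=(y_1<\dots<y_n)$, so that $Z_n(t,x,y)$ is a normalised limit as $\x\to(x,\dots,x)$, $\y\to(y,\dots,y)$; the previous steps go through for these. Splitting the ensemble at an intermediate time $s\in(0,t)$, where it occupies an ordered configuration $\mathbf{z}$ and the noise decomposes into the independent pieces $W|_{[0,s]}$, $W|_{[s,t]}$ over which the Wick exponential factorises, yields the Chapman--Kolmogorov identity
\[
  Z_n(t,\x,\y)=\int_{z_1<\dots<z_n}Z_n^{(1)}(s,\x,\mathbf{z})\,Z_n^{(2)}(t-s,\mathbf{z},\y)\;\db{z},
\]
with $Z_n^{(1)},Z_n^{(2)}$ built from the two noise pieces and all integrands non-negative. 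Hence if $Z_n(t_0,\x_0,\y_0)=0$, then for every $s\in(0,t_0)$ and every $\mathbf{z}$ at least one of $Z_n^{(1)}(s,\x_0,\mathbf{z})$, $Z_n^{(2)}(t_0-s,\mathbf{z},\y_0)$ vanishes (by continuity), so a single zero propagates into a large family of zeros of the two independent fields. It then remains to prove the pointwise statement $\mathbb{P}[Z_n(t,\x,\y)>0]=1$ for fixed $t,\x,\y$; a continuity and compactness argument upgrades this to ``$Z_n$ is strictly positive everywhere, simultaneously, for each of the two environments'', and the identity then forces $Z_n(t_0,\x_0,\y_0)>0$, the coincident-endpoint case following by control of the diagonal limit. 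For the pointwise positivity I would combine the flow structure above with a smallness-of-noise-in-a-box estimate --- on a small space-time box, the field $Z_n$ should be boundable below by a heat-flow term times the Wick exponential of a bounded noise functional, which is bounded below by a positive constant with positive probability, and the flow then boosts this to probability one --- using the $n=1$ strict positivity of Mueller and the ordered structure of the ensemble, via the $L^2$ approximation $Z_n^\epsilon\to Z_n$, to run the comparison. Establishing this last lower bound for $n\ge2$, where the non-intersection constraint couples the layers and there is no term-by-term comparison principle, is the step I expect to be hardest.
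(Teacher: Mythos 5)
Your strict-positivity plan contains a step that is false as stated, and it is precisely the step that carries the entire content of the theorem. You propose to prove the pointwise statement $\mathbb{P}[Z_n(t,\x,\y)>0]=1$ for each fixed $(t,\x,\y)$ and then assert that ``a continuity and compactness argument upgrades this'' to strict positivity at all points simultaneously. No such upgrade exists: a continuous non-negative random field can be almost surely strictly positive at every fixed point and still have a non-empty random zero set (consider $|B_z|$ for a Brownian motion $B$, positive a.s.\ at each fixed $z>0$). Ruling out such random zeros is exactly what Mueller-type arguments are designed for, and they are quantitative: one needs, on each of $m$ small time slabs, a lower bound on the deterministic part of the evolution together with a tail estimate showing that positivity on a spatial box survives with probability at least $1-\delta(m)$ where $(1-\delta(m))^m\to 1$, and then an induction across the slabs using a comparison principle. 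This is how the paper proceeds for the extension $M_n$: non-negativity of $K_n=\det[u(t,x_i,y_j)]$ yields the weak comparison principle of Lemma \ref{lem:weakComparison}, the GUE eigenvalue estimate of Lemma \ref{lem:positiveLemma1} bounds the deterministic term $J_n$ below, and moment plus modulus-of-continuity bounds on the stochastic term give Lemma \ref{lem:positiveInductionStep}; strict positivity of $Z_n$ then follows through \eqref{eq:MnBoundary}. Your proposal has no substitute for this mechanism --- the ``smallness-of-noise-in-a-box'' lower bound that would drive it is the very step you concede is unestablished for $n\geq 2$ --- and without it the Chapman--Kolmogorov zero-propagation argument is circular, since it needs simultaneous positivity of the two sub-environments, which is the statement being proved.

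The continuity step also rests on an unproven core. The estimate you assume --- that $\Vert R_k(\cdot;t,x,y)-R_k(\cdot;t',x',y')\Vert^2_{L^2(\Delta_k\times\R^k)}\leq C_k\,d^{\alpha}$ with $C_k$ decaying factorially in $k$, uniformly on compacts and up to coincident endpoints --- does not follow from the local-time computations of \cite{OW11}, which control $\int R_k^2$ at a single $(t,x,y)$ only (Lemmas \ref{lem:localTimeFormula} and \ref{lem:localTimeExpMoments}); producing H\"older bounds for differences of $k$-point correlation functions of non-intersecting bridge ensembles with varying horizon and endpoints is the analytic heart of the problem, not a routine variant. The paper circumvents it by passing to $M_n(t,\x,\y)$, showing it solves the mild equation \eqref{eq:MnDelta} driven by the Dyson kernel $Q_t$, proving the kernel increment estimates of Theorem \ref{thm:DysonKernelCty} (via the Harish--Chandra/Itzykson--Zuber formula and Johansson's contour representations), and then handling the delta-type data by restarting at a positive time $\tau$ together with the symmetry of Proposition \ref{prop:MnSymmetry} to get continuity in the backward variable. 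Your hypercontractivity step is fine in itself (it would replace the paper's Lemma \ref{lem:LpChaos}), and the mollified Feynman--Kac argument for non-negativity is plausible (the paper instead uses $K_n=\det[u]$ and path switching), but as written both the continuity estimate and the positivity lower bound are assertions rather than proofs, so the proposal does not yet establish the theorem.
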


Now define for $n=1,2,\ldots$
\begin{equation}
  h_n(t,x) = \log\left( \frac{Z_n(t,0,x)}{Z_{n-1}(t,0,x)} \right),
  \label{eq:nthKPZ}
\end{equation}
with the convention that $Z_0 \equiv 1$, then $h_1(t,x)$ is the Cole--Hopf solution to the KPZ equation with narrow wedge initial data.
An immediate corollary to the above theorem is

\begin{corollary}
  For all $n\geq 1$, $h_n$ is well defined and it is a continuous function of $(t,x)$ over $(0,\infty)\times\R$.
\end{corollary}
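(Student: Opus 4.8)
The plan is to deduce the corollary directly from Theorem \ref{thm:ZnRegularity}. Work on the almost sure event $\Omega_0$ on which, simultaneously for every $m\in\{1,\dots,n\}$, the field $(t,x,y)\mapsto Z_m(t,x,y)$ coincides with its continuous version \emph{and} satisfies $Z_m(t,x,y)>0$ for all $t>0$ and $x,y\in\R$; since this is a finite intersection (over $m$) of the almost sure events furnished by Theorem \ref{thm:ZnRegularity}, we have $\mathbb{P}[\Omega_0]=1$. On $\Omega_0$, restricting the continuous three-variable version of $Z_m$ to the affine slice $\{x=0\}$ shows that $(t,x)\mapsto Z_m(t,0,x)$ is continuous on $(0,\infty)\times\R$, and it is strictly positive there. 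With the convention $Z_0\equiv 1$ the same statements hold trivially for $m=0$.

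Next, on $\Omega_0$ the quotient $(t,x)\mapsto Z_n(t,0,x)/Z_{n-1}(t,0,x)$ is well defined, because the denominator never vanishes, and it is a continuous, strictly positive function of $(t,x)$ on $(0,\infty)\times\R$, being a ratio of continuous functions with non-vanishing denominator. Composing with $\log$, which is continuous on $(0,\infty)$, shows that $h_n$ as defined in \eqref{eq:nthKPZ} is a well-defined, continuous function of $(t,x)$ on $(0,\infty)\times\R$ for every realisation in $\Omega_0$. Since $\mathbb{P}[\Omega_0]=1$, this proves the corollary. (For $n=1$ one recovers that $h_1(t,x)=\log Z_1(t,0,x)$ is continuous, consistent with its identification as the Cole--Hopf solution of the KPZ equation with narrow wedge data.)

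There is essentially no obstacle once Theorem \ref{thm:ZnRegularity} is available. The only point that genuinely needs the strength of the theorem, rather than a weaker pointwise statement, is that the strict positivity holds for \emph{all} space-time points simultaneously with probability one; this is precisely what permits the logarithm of the ratio to be formed globally on $(0,\infty)\times\R$ for almost every realisation, so that $h_n$ is a bona fide continuous random field rather than merely defined pointwise almost surely.
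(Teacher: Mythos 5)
Your argument is correct and is exactly the route the paper takes: the corollary is stated there as an immediate consequence of Theorem \ref{thm:ZnRegularity}, obtained by intersecting the almost sure continuity and everywhere strict positivity events for $Z_1,\dots,Z_n$, restricting to $x=0$, and composing the ratio with the logarithm. Nothing is missing.
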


The collection $\{h_n, n\geq 1\}$ represents a multi-layer extension to the free energy of the continuum directed random polymer.
It is the analogue in the setting of the KPZ of the multi-layer PNG or its discrete counterpart studied in \cite{PS02} and \cite{Joh03} respectively.

We mention here the work of \cite{CH13}.
The authors showed the existence of a collection of random continuous curves such that the lowest indexed curve is distributed as the time $t$ Cole--Hopf solution to the KPZ with narrow wedge initial data.
It is believed (see \cite[Conjecture 2.17]{CH13}) that for each $t>0$ fixed, their collection of curves is equal to $\{h_n(t,x): n\geq 1, x\in\R\}$ defined by (\ref{eq:nthKPZ}).
Proving this will give an alternative proof of the continuity and strict positivity of $Z_n$ at a fixed time $t$.
In this paper, we provide a direct proof of this and furthermore our proof gives a stronger result since $t$ can vary over $(0,\infty)$.

There has been other recent work on multiple polymer paths and the multilayer process in the stochastic heat equation setting. 
In \cite{DL15} and \cite{DL16}, in a manifestation of the exact solvability, the Bethe Ansatz is used to make exact and asymptotic distributional statements.
More recently in \cite{CN17}, it was shown that directed polymer models involving multiple non-intersecting random walks, each with the same starting and end points, moving through a space-time disordered environment converges to $Z_n$ defined by \eqref{eq:ZnChaos}.

The continuity and strict positivity of $u=Z_1$ was proved by considering its mild form which suggests that to prove Theorem \ref{thm:ZnRegularity} one could consider the evolution equation satisfied by $Z_n$.
By considering a smooth space-time potential, the authors in \cite{OW11} showed that $Z_n$ should satisfy a certain SPDE, see \cite[Proposition 3.3 and 3.7]{OW11}, however unfortunately it is not immediately obvious that this SPDE makes sense in the white noise setting.
Instead, we shall show that a natural extension of $Z_n$ does satisfy a rigorous evolution equation which can be regarded as a multi-dimensional stochastic heat equation.
This allows us to derive the continuity and strict positivity of the extension and from which Theorem \ref{thm:ZnRegularity} follows as a corollary.

Denote by $W_n$ the Weyl chamber $\{\mb{x}\in\R^n: x_1\geq x_2\cdots\geq x_n\}$, then for $n = 1,2,\ldots$, $t>0$ and $\mb{x}$, $\mb{y}\in W_n$ define
\begin{equation}
  K_n(t,\mb{x},\mb{y}) = p_n^*(t,\mb{x},\mb{y}) \bigg(1 + \sum_{k=1}^\infty \int_{\Delta_{k}(t)} \int_{\R^k} R_k(\mb{s},\mb{y}^\prime; t,\mb{x},\mb{y}) \;\mW{k}{\mb{s}}{\mb{y}^\prime} \bigg),
  \label{eq:KnChaos}
\end{equation}
where $R_k$ is the $k$-point correlation function of a collection of $n$ non-intersection Brownian bridges which starts at $\mb{x}$ at time 0 and ends at $\mb{y}$ at time $t$.
$p_n^*(t,\mb{x},\mb{y}) = \det[p_t(x_i-y_j)]_{i,j=1}^n$ is by the Karlin--McGregor formula \cite{KM59} the transition density of Brownian motion killed at the boundary of $W_n$.
It was proved in \cite[Proposition 3.2]{OW11} that $K_n$ also satisfies a Karlin--McGregor type formula:
\begin{equation}
  K_n(t,\x,\y) = \det[u(t,x_i,y_j)]_{i,j=1}^n,
  \label{eq:KnKarlinMcGregor}
\end{equation}
where each term in the determinant are solutions to (\ref{eq:SHEDeltaX}) each driven by the same white noise.
Now, define for $t>0$, $\x$, $\y\in W_n^\circ$
\begin{equation}
  M_n(t,\mb{x},\mb{y}) = \frac{K_n(t,\mb{x},\mb{y})}{\Delta(\mb{x})\Delta(\mb{y})},
  \label{eq:MnDefn}
\end{equation}
where $\Delta(\mb{x}) = \prod_{1\leq i<j\leq n} (x_i-x_j)$ is the Vandermonde determinant.
It follows from (\ref{eq:KnChaos}) that $M_n$ has chaos expansion
\begin{equation}
  M_n(t,\mb{x},\mb{y}) = \frac{p_n^*(t,\mb{x},\mb{y})}{\Delta(x)\Delta(y)} \bigg(1 + \sum_{k=1}^\infty \int_{\Delta_{k}(t)} \int_{\R^k} R_k(\mb{s},\mb{y}^\prime; t,\mb{x},\mb{y}) \;\mW{k}{\mb{s}}{\mb{y}^\prime} \bigg).
  \label{eq:MnChaos1}
\end{equation}
By (\ref{eq:KnKarlinMcGregor}) and the continuity of the solution to the stochastic heat equation, it is easy to see that $K_n(t,\x,\y)$ is almost surely continuous on $(0,t)\times W_n\times W_n$ and is zero on the boundary of $W_n\times W_n$.
It follows that $M_n(t,\x,\y)$ is continuous in the interior $W_n^\circ\times W_n^\circ$.
By \cite[Lemma 5.11]{BBO09}, $p_n^*(t,\x,\y)/\Delta(\x)\Delta(\y)$ is a smooth function of $(\x,\y)$ over $\R^n\times\R^n$ and since the $k$-point correlation function $R_k$ extends continuously to the boundary of the Weyl chamber, see Section \ref{sec:nonIntersectingBM}, we see from its chaos expansion (\ref{eq:MnChaos1}) that $M_n(t,\x,\y)$ is defined for $\x$, $\y\in\partial W_n$.
This also suggests that $M_n(t,\x,\y)$ is a continuous function on $W_n\times W_n$.
Furthermore, from (\ref{eq:KnKarlinMcGregor}) we see that $M_n$ being a ratio of determinants is a permutation symmetric function of its spatial variables, that is for any permutations $\pi$, $\sigma$ of $\{1,\ldots,n\}$, $M_n(t,\pi\x,\sigma\y) = M_n(t,\x,\y)$.
Hence, we can extend $M_n$ by symmetry to a function on $\R^n\times\R^n$ and we will show that there exists a version of $M_n$ that is almost surely strictly positive and continuous on the whole of $\R^n\times\R^n$ and for all $t>0$.
Moreover, when all the $\x$ coordinates are equal and likewise for $\y$, $M_n$ agrees up to a multiplicative constant with $Z_n$, that is 
\begin{equation}
  M_n(t,a\mb{1},b\mb{1}) = c_{n,t} Z_n(t,a,b),
  \label{eq:MnBoundary}
\end{equation}
where $c_{n,t} := \big(\prod_{i=1}^{n-1} i!\big)^{-1} t^{-n(n-1)/2}$ and $\mb{1} = (1,\ldots,1)$.
Equation (\ref{eq:MnBoundary}) was shown to hold in \cite{OW11} but there the continuity of $M_n$ on the boundary of $W_n$ was only established in an $L^2$ sense; here we extend it to almost sure continuity.
Note that (\ref{eq:KnKarlinMcGregor}) suggests that $K_n(t,\x,\y)$ and $M_n(t,\x,\y)$ can be regarded as the stochastic analogue of $p_n^*(t,\x,\y)$ and $p_n^*(t,\x,\y)/\Delta(\x)\Delta(\y)$ respectively where the latter has limit at the boundary equal to $c_{n,t} p_t(a-b)^n$.

In Section \ref{sec:existence}, we will show that for all $(t,\x,\y)\in(0,\infty)\times\R^n\times\R^n$, $M_n(t,\x,\y)$  satisfies almost surely the mild equation
\begin{align}
  M_n(t,\mb{x},\mb{y}) 
  &= \frac{p_n^*(t,\x,\y)}{\Delta(\x)\Delta(\y)} + \frac{1}{(n-1)!} \int_0^t \int_{\mathbb{R}^n} Q_{t-s}(\y,\y^\prime) M_n(s,\x,\y^\prime) \;\rd{\y_*^\prime} \:\W{s}{y_1^\prime} \notag \\
  &=: J_n(t,\x,\y) + I_n(t,\x,\y),
  \label{eq:MnDelta}
\end{align}
where
$\rd{\y_*^\prime} = \rd{y_2}\ldots\rd{y_n}$ and
\begin{equation}
  Q_t(\x,\y) = \frac{\Delta(\y)}{\Delta(\x)} p_n^*(t,\x,\y) = \frac{\Delta(\y)}{\Delta(\x)} \det[p_t(x_i-y_j)]_{i,j=1}^n,
  \label{eq:QtDefn}
\end{equation}
is the transition density of Dyson's Brownian motion starting from $\x\in W_n$ and ending at $\y\in W_n$.
It satisfies
\begin{equation}
  Q_t(a\mb{1},\y) = c_{n,t} \Delta(\y)^2 \prod_{i=1}^n p_t(y_i-a).
  \label{eq:QtBoundary}
\end{equation}
We can extend $Q_t$ by symmetry to a function on $\R^n\times\R^n$ and so the integral over $\R^n$ in the mild equation (\ref{eq:MnDelta}) is defined.

Consider also the following integral equation for $(t,\y) \in(0,\infty)\times\R^n$,
\begin{align}
  M_n^g(t,\y)
  &= \frac{1}{n!}\int_{\mathbb{R}^n} g(\y^\prime) Q_t(\y,\y^\prime) \;\mathrm{d}\y^\prime \notag \\
  &\qquad + \frac{1}{(n-1)!} \int_0^t \int_{\mathbb{R}^n} Q_{t-s}(\y,\y^\prime) M_n^g(s,\y^\prime) \;\rd{\y_*^\prime} \:\W{s}{y_1^\prime} \notag \\
  &=: J_n(t,\y) + I_n(t,\y),
  \label{eq:MnBounded}
\end{align}
where $g: \mathbb{R}^n\to\mathbb{R}$ is permutation symmetric and may be random but independent of the white noise.
The function $g$ is the initial condition for equation (\ref{eq:MnBounded}) in the sense that
\[
  \lim_{t\to 0} \frac{1}{n!} \int_{\mathbb{R}^n} g(\y^\prime) Q_t(\y,\y^\prime) \;\rd{\y^\prime} = \lim_{t\to 0} \int_{W_n} g(\y^\prime) Q_t(\y,\y^\prime) \;\rd{\y^\prime} = g(\y).
\]
On the other hand, we say that $M_n(t,\x,\y)$ is the solution started from a delta initial data at $\x$ even though strictly speaking it is the ratio of $K_n(t,\x,\y)$, which can be shown to satisfy an integral equation similar to (\ref{eq:MnBounded}) with delta initial condition, and the product of Vandermonde determinants $\Delta(\x)\Delta(\y)$.

We now state the main results regarding the solutions of (\ref{eq:MnDelta}) and (\ref{eq:MnBounded}) from which Theorem \ref{thm:ZnRegularity} follows as a corollary by (\ref{eq:MnBoundary}). 
Let $\mathscr{B}_b(\mathbb{R})$ be the collection of Borel measurable subsets of $\mathbb{R}$ with finite Lebesgue measure and let $W = \big(W_t(A), t\geq 0, A\in\mathscr{B}_b(\R)\big)$ be space-time white noise on a complete probability space $(\Omega,\mathscr{F},\mathbb{P})$ endowed with a right-continuous filtration $(\mathscr{F}_t)_{t\geq 0}$ such that $W$ is $\mathscr{F}_t$-adapted and $W_t(A) - W_s(A)$ is independent of $\mathscr{F}_s$ for all $A\in\mathscr{B}_b(\R)$.
From now on we fix this filtered probability space $(\Omega,\mathscr{F},(\mathscr{F}_t)_{t\geq 0},\mathbb{P})$.
We use $\mathbb{E}$ to denote the expectation with respect to $\mathbb{P}$ and for $p\geq 1$, $\Vert\cdot\Vert_p = \left( \mathbb{E}[|\cdot|^p] \right)^{1/p}$ denotes the $L^p(\Omega)$ norm.
Throughout this paper, $c_p\leq 2\sqrt{p}$ is the constant appearing in the Burkholder--Davis--Gundy inequality. 

\begin{theorem} 
  \begin{itemize}
    \item[(a)] Suppose that $g$ is $\mathscr{F}_0$-measurable and symmetric and satisfies for all $p\geq 2$, $\sup_{\y\in\mathbb{R}^n} \Vert g(\y)\Vert_p \leq K_{p,g} < \infty$, then there exists a solution $\big(M_n^g(t,\y), (t,\y)\in [0,\infty)\times\mathbb{R}^n\big)$ to the integral equation (\ref{eq:MnBounded}) that is unique (in the sense of versions) in the class of all random fields $\big(v(t,\y), (t,\y)\in[0,\infty)\times\R^n\big)$ that satisfy $\sup_{(t,\y)\in[0,T]\times\R^n} \Vert v(t,\y)\V_p < \infty$ for all $T>0$.
        The solution satisfies for all $p\geq 2$ 
  \begin{equation} 
    \Vert M_n^g(t,\y) \Vert_p^2 < 4 K_{p,g}^2 e^{A^2 c_p^4 t},
    \label{eq:MnPthMoment}
  \end{equation}
for a constant $A>0$ depending on $n$.
  
Moreover, $M_n^g$ has a version such that $(t,\y)\mapsto M_n^g(t,\y)$ is locally H\"older continuous on $(0,\infty)\times\mathbb{R}^n$ with indices $\alpha<1/2$ in space and $\alpha<1/4$ in time.
  \item[(b)]  The chaos expansion (\ref{eq:MnChaos1})  defines a  solution $ \big(M_n(t,\x,\y), (t,\x,\y) \in(0,\infty)\times\mathbb{R}^n\times\R^n\big)$ to the integral equation (\ref{eq:MnDelta}) such that for all $p\geq 2$ and $T>0$
  \begin{equation}
    \sup_{\x,\y\in\R^n} \Vert M_n(t,\x,\y) \Vert_p^2 \leq C_{n,p,T} t^{-n^2}, \text{ for } t \leq T,
    \label{eq:MnPthMomentDelta}
  \end{equation}
  for some constant $C_{n,p,T}$.   $M_n$ is a fundamental solution to \eqref{eq:MnBounded} in the sense that whenever $g$ satisfies the assumptions of part (a) of this theorem, then, 
   \[
    M_n^g(t,\y) := \frac{1}{n!} \int_{\R^n} g(x)M_n(t,\x,\y) \Delta(x)^2 \;\rd{x}.
  \] 
  is the  unique solution to \eqref{eq:MnBounded} with initial condition $g$.

  Moreover, $M_n$ has a version such that $(t,\x,\y) \mapsto M_n(t,\x,\y)$ is locally H\"older continuous on $(0,\infty)\times \mathbb{R}^n\times\mathbb{R}^n$ with indices $\alpha<1/2$ in space and $\alpha<1/4$ in time.
  
  \end{itemize}
  \label{thm:MnMain}
\end{theorem}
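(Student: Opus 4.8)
The plan is to prove part (a) by a Picard iteration in $L^{p}(\Omega)$, and then to read off the continuity assertion in part (b) from part (a) by restarting the equation at a positive time; the moment bound (\ref{eq:MnPthMomentDelta}) will come separately from $L^{p}$-estimates on the chaos series (\ref{eq:MnChaos1}). Everything rests on heat-kernel type estimates for the reduced kernel
\[
  \bar{Q}_{t}(\y,z):=\int_{\R^{n-1}}Q_{t}\big(\y,(z,y_{2}',\ldots,y_{n}')\big)\,\rd{y_{2}'}\cdots\rd{y_{n}'},
\]
which is the kernel actually multiplying the white noise in $I_{n}$. I would establish, uniformly in $\y\in\R^{n}$, the bound $\int_{\R}\bar{Q}_{t}(\y,z)^{2}\,\rd{z}\le C_{n}t^{-1/2}$, the spatial increment estimate $\int_{0}^{t}\!\int_{\R}|\bar{Q}_{t-s}(\y,z)-\bar{Q}_{t-s}(\y',z)|^{2}\,\rd{z}\,\rd{s}\le C_{n}|\y-\y'|$, and a time increment estimate of order $|t-t'|^{1/2}$. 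These should follow from the identity $Q_{t}(\y,\y')=\Delta(\y')^{2}\,p_{n}^{*}(t,\y,\y')/(\Delta(\y)\Delta(\y'))$, the smoothness of $(\x,\y)\mapsto p_{n}^{*}(t,\x,\y)/(\Delta(\x)\Delta(\y))$ from \cite[Lemma 5.11]{BBO09} (which removes the apparent singularity of $Q_{t}$ on $\partial W_{n}$), and the Gaussian decay of the determinantal kernel, with the powers of $t$ pinned down by the scaling $Q_{t}(\y,\y')=t^{-n/2}Q_{1}(\y/\sqrt{t},\y'/\sqrt{t})$; in the case $n=2$ one computes $\bar{Q}_{t}((a,a),z)=p_{t}(a-z)\big(1+(z-a)^{2}/t\big)$, confirming directly that $\int_{\R}\bar{Q}_{t}(\y,z)^{2}\,\rd{z}\sim t^{-1/2}$ up to and including the boundary.

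Granting these estimates, part (a) is the standard stochastic heat equation argument. Set $M_{n}^{(0)}=J_{n}$ and $M_{n}^{(m+1)}=J_{n}+I_{n}[M_{n}^{(m)}]$. Applying the Burkholder--Davis--Gundy inequality to the one-dimensional white-noise integral in $I_{n}$ followed by Minkowski's integral inequality (using $Q\ge 0$) gives, with $\phi_{v}(s):=\sup_{\y'}\|v(s,\y')\|_{p}$ and a constant $A=A(n)$,
\[
  \|I_{n}[v](t,\y)\|_{p}^{2}\le c_{p}^{2}A_{n}^{2}\int_{0}^{t}\Big(\int_{\R}\bar{Q}_{t-s}(\y,z)^{2}\,\rd{z}\Big)\phi_{v}(s)^{2}\,\rd{s}\le A^{2}c_{p}^{2}\int_{0}^{t}(t-s)^{-1/2}\phi_{v}(s)^{2}\,\rd{s}.
\]
Since $\int_{\R^{n}}Q_{t}(\y,\y')\,\rd{\y'}=n!$ yields $\sup_{\y}\|J_{n}(t,\y)\|_{p}\le K_{p,g}$, the quantity $a(t):=\sup_{s\le t}\sup_{\y}\|M_{n}(s,\y)\|_{p}^{2}$ obeys the Volterra inequality $a(t)\le 2K_{p,g}^{2}+2A^{2}c_{p}^{2}\int_{0}^{t}(t-s)^{-1/2}a(s)\,\rd{s}$; iterating the kernel $(t-s)^{-1/2}$ and summing the resulting Beta integrals, the even powers summing to $e^{A^{2}c_{p}^{4}t}$ and the odd ones to the error-function term, one recovers exactly the right-hand side of (\ref{eq:MnPthMoment}) after adjusting $A$. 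The same estimate makes $(M_{n}^{(m)})$ a Cauchy sequence in $\{v:\sup_{[0,T]\times\R^{n}}\|v\|_{p}<\infty\}$ for each $T$, giving existence, and applied to the difference of two solutions it gives uniqueness. The H\"older statement follows from the Kolmogorov--Chentsov theorem in the parabolic metric $|t-t'|^{1/2}+|\y-\y'|$: increments of $J_{n}$ are controlled by differentiating the smooth kernel $p_{n}^{*}(t,\x,\y)/(\Delta(\x)\Delta(\y))$, and increments of $I_{n}$ by Burkholder--Davis--Gundy, Minkowski, the kernel increment bounds above (the change in the upper limit of the $\rd{s}$-integral contributing $\int_{t'}^{t}(t-s)^{-1/2}\,\rd{s}\lesssim|t-t'|^{1/2}$) and the moment bound just proved; taking $p$ large absorbs the $\varepsilon$-losses and produces indices $<1/2$ in space and $<1/4$ in time.

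For part (b) I would first show the series (\ref{eq:MnChaos1}) converges in every $L^{p}(\Omega)$ with $\sup_{\x,\y}\|M_{n}(t,\x,\y)\|_{p}^{2}\le C_{n,p}t^{-n^{2}}$: the $L^{2}$-norm of the $k$-th term is controlled by the local-time estimates for non-intersecting Brownian bridges of \cite{OW11}, one passes to $L^{p}$ via Nelson's hypercontractivity, $\|F_{k}\|_{p}\le(p-1)^{k/2}\|F_{k}\|_{2}$, and the series is summed, scaling fixing the power $t^{-n^{2}}$. That this series solves (\ref{eq:MnDelta}) is obtained in Section \ref{sec:existence} by mollifying the white noise and passing to the limit, and uniqueness within the class (\ref{eq:MnPthMomentDelta}) follows because formally iterating (\ref{eq:MnDelta}) regenerates the chaos series while the growth bound forces the remainder to vanish. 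Local H\"older continuity on $(0,\infty)\times\R^{n}\times\R^{n}$ is then deduced from part (a) by restarting: for fixed $t_{0}>0$ the field $\y\mapsto M_{n}(t_{0},\x,\y)$ is $\mathscr{F}_{t_{0}}$-measurable, symmetric and satisfies $\sup_{\y}\|M_{n}(t_{0},\x,\y)\|_{p}\le(C_{n,p}t_{0}^{-n^{2}})^{1/2}<\infty$, while $(t,\y)\mapsto M_{n}(t_{0}+t,\x,\y)$ solves (\ref{eq:MnBounded}) for the time-shifted white noise with this initial datum, so part (a) furnishes a H\"older version on $(t_{0},\infty)$; letting $t_{0}\downarrow 0$ covers all of $(0,\infty)$.

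The main obstacle is the reduced-kernel analysis of the first paragraph --- the sharp $t^{-1/2}$ scaling and the increment bounds, \emph{uniformly} over $\y\in\R^{n}$ and in particular through the degeneracy of $Q_{t}$ on $\partial W_{n}$ --- together with, for part (b), the sharp $t^{-n^{2}}$ moment bound, which demands precise control of the correlation functions $R_{k}$ near the diagonal, and a careful check that the restarted field genuinely solves (\ref{eq:MnBounded}) driven by the shifted noise.
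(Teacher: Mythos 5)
Your plan for part (b)'s continuity has a genuine gap: the restart argument you describe treats the starting point $x$ as a frozen parameter. For each fixed $x$ it produces (via part (a) applied to the shifted noise with initial datum $M_n(t_0,x,\cdot)$) a H\"older version of $(t,\y)\mapsto M_n(t_0+t,x,\y)$, but the theorem asserts local H\"older continuity jointly on $(0,\infty)\times\R^n\times\R^n$, i.e.\ in $x$ as well, and nothing in your proposal bounds $\Vert M_n(t,x,y)-M_n(t,x',y)\Vert_p$. Neither (\ref{eq:MnDelta}) nor (\ref{eq:MnBounded}) gives direct access to increments in $x$, since $x$ enters only through the singular term $p_n^*(t,x,y)/\Delta(x)\Delta(y)$ and through the correlation functions inside the chaos series. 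The paper closes exactly this point with a separate idea: the symmetry $R_k(\mathbf{s},\mathbf{y};t,x,y)=R_k(t-\mathbf{s},\tilde{\mathbf{y}};t,y,x)$ combined with time reversal of the white noise (Lemma \ref{lem:timeReversal}) yields Proposition \ref{prop:MnSymmetry}, namely that for fixed $y$ the fields $(t,x)\mapsto M_n(t,x,y)$ and $(t,x)\mapsto M_n(t,y,x)$ are equal in law; hence $L^p$-increments in the backward variable $x$ coincide with $L^p$-increments in the forward variable, which your (and the paper's) estimates do control, and Kolmogorov's criterion can then be applied in all $2n+1$ variables. Without this transposition-in-law argument, or some substitute, your construction only yields a version continuous in $(t,\y)$ for each fixed $x$, which is strictly weaker than the statement.

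Otherwise your architecture matches the paper's: Picard iteration driven by the reduced kernel $K_t(x,y_1)=\int_{\R^{n-1}}Q_t(x,y)\,\rd{y_2}\cdots\rd{y_n}$, whose $t^{-1/2}$ bound and increment estimates are precisely Lemma \ref{lem:integralofKSquared} and Theorem \ref{thm:DysonKernelCty}; the error-function moment bound obtained by summing Beta integrals; and, for (\ref{eq:MnPthMomentDelta}), control of the chaos series by the local-time formulas of \cite{OW11}, where your hypercontractivity step is a legitimate alternative to the paper's iterated Burkholder--Davis--Gundy bound (Lemma \ref{lem:LpChaos}). But note two further places where you assert rather than argue. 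First, the uniform-in-$\y$ kernel estimates through the degeneracy of $Q_t$ at $\partial W_n$ are the technical core of the paper (proved via the Harish--Chandra formula and Johansson's contour-integral kernel in Propositions \ref{prop:L1Derivative} and \ref{prop:UniformBoundDerivative} and Lemma \ref{lem:SupK}); you correctly identify them as the main obstacle but do not supply them. Second, you claim the chaos series solves (\ref{eq:MnDelta}) by mollifying the noise and passing to the limit, whereas the paper proves this by direct substitution of (\ref{eq:MnChaos1}) into (\ref{eq:MnDelta}), using the semigroup structure of the correlation functions $R_k$ and the stochastic Fubini theorem; the mollification route would require its own nontrivial convergence argument that you do not provide. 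Also, the paper's uniqueness for (b) is obtained by observing that the difference of two solutions solves (\ref{eq:MnBounded}) with $g\equiv 0$, which is cleaner than ``iterating the equation and killing the remainder'' and avoids any issue with the blow-up of moments at $t=0$.
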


\begin{theorem}
  Let $g$ be as in Theorem \ref{thm:MnMain}(a) with the additional property that $g$ is non-negative almost surely and $\mathbb{P}[g(\y)>0 \text{ for some } \y\in\R^n] = 1$.
  Then the solution $M_n^g$ to (\ref{eq:MnBounded}) satisfies
  \[
    \mathbb{P}[ M_n^g(t,\y) >0 \text{ for all } t>0 \text{ and } \y\in\mathbb{R}^n] = 1.
  \]
  Let $M_n$ be the random field defined by (\ref{eq:MnChaos1}) then
  \[
    \mathbb{P}[ M_n(t,\x,\y) >0 \text{ for all } t>0 \text{ and } \x,\y\in\mathbb{R}^n] = 1.
  \]
  \label{thm:strictPositivity}
\end{theorem}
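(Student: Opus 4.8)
The plan is to adapt to the mild equation~\eqref{eq:MnBounded} the strict-positivity argument of Mueller for the stochastic heat equation. We begin with two preliminaries. First, \emph{non-negativity}: $M_n=K_n/(\Delta\Delta)\ge0$ on $\R^n\times\R^n$, since $K_n=\det[u(t,\cdot,\cdot)]$ (cf.~\eqref{eq:KnKarlinMcGregor}) is the partition function of $n$ non-colliding Brownian bridges carrying strictly positive Boltzmann weights, hence non-negative on $W_n\times W_n$, and this extends by continuity and permutation symmetry to $\R^n\times\R^n$; non-negativity of $M_n^g$ for $g\ge0$ follows by the same mechanism, most easily by mollifying the white noise. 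Second, since $W$ is independent of $\mathscr{F}_0$, after conditioning on $\mathscr{F}_0$ we may treat $g$ as a fixed, non-negative, non-trivial (not almost everywhere zero) function; the consequence we use is that
\[ J_n(t,\y)=\tfrac1{n!}\int_{\R^n}g(\y^\prime)\,Q_t(\y,\y^\prime)\,\rd{\y^\prime}>0\qquad\text{for all }t>0,\ \y\in\R^n, \]
because $Q_t(\y,\cdot)$ — the symmetric extension of $\tfrac{\Delta(\cdot)}{\Delta(\y)}p_n^*(t,\y,\cdot)$ — is strictly positive at almost every $\y^\prime$ (strict total positivity of the heat kernel on $W_n^\circ$). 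Moreover $(t,\y)\mapsto J_n(t,\y)$ is continuous on $(0,\infty)\times\R^n$, hence bounded below by a positive constant on every compact set; it is this strictly positive deterministic drive that will keep $M_n^g$ from touching $0$.

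The heart of the proof is a quantitative lower bound of Mueller type. Fixing $0<a<b<\infty$ and a closed box $B\subset\R^n$, the aim is to produce constants $c,\lambda>0$ such that
\[ \mathbb{P}\Big[\inf_{(t,\y)\in[a,b]\times(B+[-N,N]^n)}M_n^g(t,\y)\le e^{-cN}\Big]\le e^{-\lambda N}\qquad(N\ge1). \]
Granted this, the Borel--Cantelli lemma gives $M_n^g>0$ on $[a,b]\times\R^n$ almost surely, and intersecting these events over $k\in\mathbb{N}$ (with $a=1/k$, $b=k$) yields $\mathbb{P}[M_n^g(t,\y)>0\text{ for all }t>0,\ \y\in\R^n]=1$. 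I would prove the displayed estimate by a recursion along a geometric sequence of time scales, exactly as in Mueller's scheme: by~\eqref{eq:MnBounded}, the value of $M_n^g$ a short time $\tau$ later equals $J_n$ — strictly positive and bounded below — plus a stochastic increment which, by the Burkholder--Davis--Gundy inequality together with the moment bound~\eqref{eq:MnPthMoment}, is small with Gaussian-type tails; here one uses that $\int_\R\big(\int_{\R^{n-1}}Q_\tau(\y,\y^\prime)\,\rd{\y_*^\prime}\big)^2\rd{y_1^\prime}$ is $O(\tau^{-1/2})$ uniformly in $\y$ — the same short-time scaling as for the scalar heat kernel, since over short times the Dyson particles decouple. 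Feeding these estimates into the recursion, and using the spatial H\"older regularity from Theorem~\ref{thm:MnMain} to upgrade pointwise bounds to bounds uniform over $B$, produces the estimate.

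With strict positivity of $M_n^g$ in hand, the statement for $M_n$ (delta initial data) follows by restarting. For each $\eta>0$ and $\x\in\R^n$, the field $\y^\prime\mapsto M_n(\eta,\x,\y^\prime)$ is $\mathscr{F}_\eta$-measurable, non-negative, satisfies $\sup_{\y^\prime}\Vert M_n(\eta,\x,\y^\prime)\Vert_p<\infty$ by~\eqref{eq:MnPthMomentDelta}, and is almost surely not identically zero, since $\mathbb{E}[M_n(\eta,\x,\y^\prime)]=p_n^*(\eta,\x,\y^\prime)/(\Delta(\x)\Delta(\y^\prime))>0$ for $\y^\prime\in W_n^\circ$; by the flow (Markov) property of~\eqref{eq:MnBounded}, $(s,\y)\mapsto M_n(\eta+s,\x,\y)$ solves~\eqref{eq:MnBounded} with this initial datum and the time-shifted white noise, so the first part gives $M_n(\eta+s,\x,\y)>0$ for all $s>0$ and $\y\in\R^n$, almost surely; letting $\eta\downarrow0$ and using continuity of $M_n$ in $\x$ over a countable dense set completes the proof. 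I expect the main obstacle to be the recursion itself — running Mueller's scheme with the Dyson kernel $Q_t$ in place of the heat kernel. Two features demand care: the noise acts only in the coordinate $y_1$, so positivity must spread in the remaining coordinates purely through the deterministic kernel $Q_t$; and $Q_t$ (together with $M_n$) degenerates near $\partial W_n$, where the Vandermonde weights vanish, so the short-time estimate above has to hold uniformly up to the boundary and the diagonal. This is handled either by first running the recursion on compact subsets of $W_n^\circ$ and then covering a neighbourhood of $\partial W_n$ — and of the diagonal, where~\eqref{eq:MnBoundary} enters — by symmetry and continuity, or by carrying the Vandermonde weights through the estimates explicitly. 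That the scaling in $\tau$ matches the scalar case is what makes Mueller's argument transferable.
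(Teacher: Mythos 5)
Your overall scheme (Mueller-type iteration: deterministic kernel spreads positivity over short time steps, stochastic perturbation controlled by the moment bound \eqref{eq:MnPthMoment} plus a quantitative Kolmogorov estimate, then a restart for the delta data) is the same as the paper's, but the two steps you flag as delicate are exactly where your proposal fails as written. First, the boundary of the Weyl chamber: you propose to run the recursion on compact subsets of $W_n^\circ$ and then cover a neighbourhood of $\partial W_n$ and the diagonal ``by symmetry and continuity''. Continuity only transfers \emph{non-negativity} to the boundary, never strict positivity --- $p_n^*(t,x,y)$ itself is positive on $W_n^\circ$ and vanishes on $\partial W_n$ --- and the boundary (the diagonal points entering \eqref{eq:MnBoundary}) is precisely where the theorem is needed for $Z_n$. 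What is required is a lower bound for the mass $\int Q_s(x,y)\mathbf{1}_{(-h,h)^n}(y)\,\rd{y}$ that is uniform in the starting point $x$ up to and including $\partial W_n$; the paper obtains this in Lemma \ref{lem:positiveLemma1} by realising $Q_s$ through Hermitian Brownian motion and using Weyl's eigenvalue inequality, so that positivity is propagated directly at every point of $\R^n$, with the lower constant decaying geometrically per step and $(1-\delta(m))^m\to1$. Your alternative fix (``carry the Vandermonde weights explicitly'') is left unspecified, so the heart of the argument --- why positivity spreads uniformly up to the boundary --- is missing. Relatedly, your reduction ``treat $g$ as not almost everywhere zero, hence $J_n>0$'' silently strengthens the hypothesis ($g>0$ at some point a.s.); the paper instead treats general $g$ by restarting at a small time $\tau$, where $M_n^g(\tau,\cdot)$ is continuous, and proves a.s. non-triviality of that profile by a contradiction argument.

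Second, the delta-initial-data restart has two genuine gaps. You infer that $\y'\mapsto M_n(\eta,\x,\y')$ is ``almost surely not identically zero'' from $\mathbb{E}[M_n(\eta,\x,\y')]>0$; positive mean plus non-negativity only gives $\mathbb{P}[M_n(\eta,\x,\y')>0]>0$ at each fixed $\y'$, not almost-sure non-triviality of the random profile, so this step needs the paper's contradiction argument (or an equivalent). And your final passage ``letting $\eta\downarrow0$ and using continuity of $M_n$ in $\x$ over a countable dense set'' suffers from the same defect as the boundary step: positivity at a countable dense set of $\x$ plus continuity yields only $M_n\ge0$ at the remaining $\x$, whereas the claim is strict positivity for \emph{all} $\x$ simultaneously. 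The paper resolves this by first using the time-reversal symmetry (Proposition \ref{prop:MnSymmetry}) to get $\mathbb{P}[M_n(\tau,\x,0)>0\ \text{for all}\ \x]=1$, then exploiting joint continuity to produce random boxes $B_N$ on which $M_n(\tau,\x,\cdot)\ge (1/N)\mathbf{1}_{(-1/N,1/N)^n}$, and applying the already-proved positivity for deterministic indicator data together with the weak comparison principle (Lemma \ref{lem:weakComparison}). Without these ingredients your argument establishes positivity only on $W_n^\circ$, for a dense set of $\x$, and under a strengthened hypothesis on $g$.
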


Comparing (\ref{eq:MnDelta}) and (\ref{eq:MnBounded}) with (\ref{eq:SHEMild}), we see that they have a similar form to the mild equation of the SHE which has been well studied.
It has been shown for various initial data that the solution is H\"older continuous with indices up to $1/2$ in space and up to $1/4$ in time.
For example, the case with initial data having bounded moments was studied by Walsh in \cite{Wa86}.
Bertini and Cancrini stated the H\"older continuity in \cite{BC95} for a class of initial data which includes a delta function.
More recently, Chen and Dalang \cite{CD13b} proved the H\"older continuity for a non-linear SHE with initial data $\mu$ being a signed Borel measure over $\mathbb{R}$ such that $(|\mu|\ast p_t)(x) < \infty$ for all $t>0$ and $x\in\R$. 
For other variants of the SHE see for example \cite{CJKS14}, \cite{Sh94}, \cite{SS02} and the references therein.

In each case the tool used to prove the continuity of the solution is Kolmogorov's continuity criterion.
Denote the stochastic integral term of (\ref{eq:SHEMild}) by $I(t,y)$ then the key is to show that 
\[
  \E[|I(t,y) - I(t^\p,y^\p)|^p] \leq C\big( |y-y^\p|^{p/2} + |t-t^\p|^{p/4} \big),
\]
for $p$ large enough.
This in turn requires showing some continuity estimate for the heat kernel and in our case, estimates for the kernel $Q_t$, see Theorem \ref{thm:DysonKernelCty} below.
These estimates get increasingly involved for increasingly less regular initial data due to the $p$th moments $\E[|u(t,y)|^p]$ of the solution being unbounded as $t\downarrow 0$ or as $y\to\infty$ or both.
However for certain initial data such as a delta function, even though the $p$th moments blow up as time $t\downarrow 0$, they are for any fixed positive times uniformly bounded in space and thus one can in effect isolate the effects of the initial data by solving the equation for a small time and then start afresh with the current solution as the new initial condition.
This is the case with $M_n(t,\x,\y)$.
We will show that for all positive times $t$, $\E[|M_n(t,\x,\y)|^p]$ is bounded uniformly in space for all $p$ which puts us in the situation of (\ref{eq:MnBounded}) with $g$ having uniformly bounded $p$th moments for which continuity is easier to obtain.

The strict positivity of the solution to the stochastic heat equation was first proved by Mueller in \cite{Mu91}.
He showed that if the initial data $f$ is non-negative, continuous with compact support with $f(x)>0$ for some $x\in\mathbb{R}$, then for all $t>0$ 
\[
  \mathbb{P}[ u(t,x) > 0 \text{ for every } x\in\mathbb{R}] = 1.
\]
Bertini and Cancrini proved a weak comparison principle using the Feynman--Kac formula and used it to extend Mueller's result to a delta type initial data.
Shiga in \cite{Sh94} proved the stronger statement
\[
  \mathbb{P}[ u(t,x)>0 \text{ for every } x\in\mathbb{R} \text{ and every } t>0 ] = 1,
\]
for initial data being continuous functions such that the tails grow no faster than $e^{\lambda|x|}$ for all $\lambda>0$.
More recently, Moreno Flores in \cite{Fl14} proved the strict positivity of the solution for delta initial conditions, using a convergence result of a discrete polymer model to the SHE, see \cite{AKQ14}.
Chen and Kim \cite{CK14} further generalised the strict positivity result to the fractional SHE, which includes as a special case the SHE considered here, for measure-valued initial data by adapting Shiga's method.

In all of the proofs above (except for the polymer proof) a key result is a large deviation estimate on the stochastic integral term of the solution. 
Mueller proved such result using the fact that integrals of the type $\int_0^t \int_\mathbb{R} f(s,y) \;\W{s}{y}$ can be considered as a time-changed Brownian motion.
Chen and Kim using a method of \cite{CJK12} derived a similar estimate for the fractional SHE using Kolmogorov's continuity criterion.
We will adapt the approach of \cite{CK14} since we will first derive the necessary estimates in order to prove H\"older continuity anyway.

The outline of the paper is as follows.
In Section \ref{sec:whiteNoise} we first briefly recall integration with respect to space-time white noise and multiple stochastic integrals.
In Section \ref{sec:LpBound} we derive an upper bound on the $L^p(\Omega)$ norm of stochastic integrals which will be used repeatedly in this paper and we discuss briefly non-intersecting Brownian bridges in Section \ref{sec:nonIntersectingBM}.
We then prove some estimates on the transition density $Q_t$ in Section \ref{sec:estimatesQt} which are central to the proof of existence and continuity.
A key to the proof of the estimates is the Harish-Chandra/Itzykson--Zuber formula \cite{IZ80}.
The existence, uniqueness and moment estimates part of Theorem \ref{thm:MnMain} will be proved in Section \ref{sec:existence}.
The proof of H\"older continuity is in Section \ref{sec:cty}.
Finally, in Section \ref{sec:positivity} we prove a strong comparison principle for the integral equation (\ref{eq:MnBounded}) of which Theorem \ref{thm:strictPositivity} is a corollary.

\subsection*{Acknowledgements}
C.H.L would like to thank Roger Tribe for numerous helpful discussions on SPDEs. The research of C.H.L was supported by EPSRC grant number EP/H023364/1 through the MASDOC DTC.

\section{Preliminaries}\label{sec:prelim} 

\subsection{White Noise and Stochastic Integration}\label{sec:whiteNoise}

In this section we briefly recall the Walsh stochastic integral with respect to white noise, see for example \cite{Wa86}, \cite{Kh09} and \cite{Da99} for details.
Let $\mathscr{B}_b(\mathbb{R}^d)$ be the collection of Borel measurable subsets of $\mathbb{R}^d$ with finite Lebesgue measure.
A \emph{white noise} on $\mathbb{R}^d$ is a mean zero Gaussian random field $\{\dot{W}(A)\}_{A\in\mathscr{B}_b(\mathbb{R}^d)}$ with covariance function
\[
  \mathbb{E}[\dot{W}(A) \dot{W}(B)] = |A\cap B|, \quad\text{for all } A,B\in\mathscr{B}_b(\mathbb{R}^d),
\]
where $|\cdot|$ denotes the Lebesgue measure on $\mathbb{R}^d$.
We will only consider the case $d = 2$ and we interpret one of the dimensions as time.
More precisely, we define a \emph{space-time white noise} $\big(W_t(A), t\geq 0, A\in\mathscr{B}_b(\mathbb{R})\big)$ by $W_t(A) := \dot{W}([0,t]\times A)$ on a filtered probability space $(\Omega,\mathscr{F},(\mathscr{F}_t)_{t\geq 0},\mathbb{P})$ as described above Theorem \ref{thm:MnMain}.

A random field $f$ is elementary if it is of the form
\[
  f(s,y) = X1_{(a,b]}(s)1_A(y),
\]
where $X$ is bounded and $\mathscr{F}_a$-measurable and $A\in\mathscr{B}(\mathbb{R})$.
A simple function is a finite linear combination of elementary functions.
We say that a random field $f$ is predictable if it is measurable with respect to the $\sigma$-algebra generated by the simple functions and we say that $f\in\mathscr{P}_2$ if it is predictable and $f\in L^2(\Omega\times[0,\infty)\times\R)$.
According to Walsh's theory, \cite{Wa86}, $\{W_t(A)\}$ belongs to a suitable class of integrators called worthy martingale measures and the integral
\[
  \int_0^\infty \int_\R f(s,y) \;\W{s}{y},
\]
is defined for all $f\in\sP_2$.

We will make use of the following stochastic Fubini theorem,  \cite[Theorem 2.6]{Wa86}. 
\begin{lemma}
\label{lem:stochfubini}
Suppose that $f: \Omega \times [0,\infty) \times \R \times \R \rightarrow \R$ is predictable and that
\[
\E \left[ \int_0^\infty \int_\R \int_\R f(s,x,y)^2 \; \mu(\rd{x})\;\rd{y}\;\rd{s} \right]
\]
is finite where $\mu$ is a given finite measure on $\R$. 
Then
\[
\int_\R \left( \int_0^\infty \int_\R f(s,x,y) \;\W{s}{y}\right) \mu(\rd{x}) = \int_0^\infty \int_\R \left( \int_\R f(s,x,y) \; \mu(\rd{x})\right) \;\W{s}{y}
\]
\end{lemma}

Now we turn our attention to multiple stochastic integrals which appear in the chaos series in the introduction.
Let $k>1$.
We say that $f\in L^2_S([0,t]^k\times\mathbb{R}^k)$ if $f\in L^2([0,t]^k\times\mathbb{R}^k)$ such that $f(\pi\mb{s},\pi\mb{y}) = f(\mb{s},\mb{y})$ for all $(\mb{s},\mb{y}) \in[0,t]^k\times\mathbb{R}^k$ and $\pi\in S_k$ where $S_k$ is the set of permutations of $\{1,\ldots,k\}$ and $\pi\mb{s} = (s_{\pi 1},\ldots,s_{\pi k})$.
Let $A_1,\ldots,A_k$ be disjoint subsets of $[0,t]\times\mathbb{R}$.
An elementary function in $L_S^2([0,t]^k\times\mathbb{R}^k)$ is a function of the form 
\begin{equation}
  f(\mb{s},\mb{y}) = \sum_{\pi\in S_k} \prod_{i=1}^k 1\{(s_{\pi i},y_{\pi i}) \in A_i\}.
  \label{eq:kElementary}
\end{equation}
For such $f$ we define the $k$-fold integral by
\[
  (f\cdot W)_k(t) = \int_{[0,t]^k} \int_{\mathbb{R}^k} f(\mb{s},\mb{y}) \;\mW{k}{\mb{s}}{\mb{y}} = k! \prod_{i=1}^k \dot{W}(A_i).
\]
It can be shown that linear combinations of functions of the form (\ref{eq:kElementary}) are dense in $L_S^2([0,t]^k\times\mathbb{R}^k)$ and that for an elementary $f$, the integral $(f\cdot W)_k$ satisfies an It\^o isometry, hence for a general $f\in L_S^2([0,t]^k\times\mathbb{R}^k)$, we define $(f\cdot W)_k = \lim_{n\to\infty} (f_n\cdot W)_k$ where $\{f_n\}_{n\geq 1}$ is a sequence of elementary functions such that $f_n\to f$ in $L^2([0,t]^k\times\R^k)$.
The resulting integral is a mean zero random variable with covariance given by
\begin{equation}
  \mathbb{E}[ (f\cdot W)_k(t) (g\cdot W)_k(t)] = (f,g)_{L^2([0,t]^k\times\mathbb{R}^k)}.
  \label{eq:covarKFoldIntegral}
\end{equation}
For $f\in L^2([0,t]^k\times\mathbb{R}^k)$ that are not symmetric, we define its integral by first symmetrising $f$ via
\[
  \tilde{f}(\mb{s},\mb{y}) := \frac{1}{k!} \sum_{\pi\in S_k} f(\pi\mb{s},\pi\mb{y}),
\]
and then define
\[
  (f\cdot W)_k(t) = (\tilde{f}\cdot W)_k(t).
\]
Finally, for functions $f$ defined on $\Delta_k(t)\times\R^k$, for example the $k$-point correlation function $R_k$ appearing in (\ref{eq:ZnChaos}) and (\ref{eq:KnChaos}), we first extend it to a function on $[0,t]^k$ by setting it to be zero for $\mb{s}\notin\Delta_k(t)$ and then define
\[
  \int_{\Delta_k(t)} \int_{\mathbb{R}^k} f(\mb{s},\mb{y}) \;\mW{k}{\mb{s}}{\mb{y}} := (\tilde{f}\cdot W)_k(t).
\]

Now define a time reversed white noise $\tilde{W}$ by $\tilde{W}([0,s]\times A) = \dot{W}([t-s,t]\times A)$, $s\leq t$ and $A\in\mathscr{B}_b(\mathbb{R})$.
We will need the following result for the proof of continuity in Section \ref{sec:ctyInitialData}.

\begin{lemma}
  Let $f\in L_S^2([0,t]^k\times\mathbb{R}^k)$ then
  \[
    \int_{[0,t]^k} \int_{\mathbb{R}^k} f(\mb{s},\mb{y}) \;\mW{k}{\mb{s}}{\mb{y}} = \int_{[0,t]^k} \int_{\mathbb{R}^k} f(t-\mb{s},\mb{y}) \;\tilde{W}^{\otimes k}(\rd{\mb{s}},\rd{\mb{y}}) \quad \mathrm{a.s.,}
  \]
  where $t-\mb{s} = (t-s_1,\ldots,t-s_k)$.
  \label{lem:timeReversal}
\end{lemma}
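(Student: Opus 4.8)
The plan is to reduce the identity to the elementary functions for which both sides of the multiple stochastic integral are defined by explicit formulas, and then pass to the limit. First I would recall that the map $f \mapsto f(t-\cdot,\cdot)$ is a linear isometry of $L_S^2([0,t]^k\times\R^k)$ onto itself, since it merely reflects the time coordinates; in particular it sends (linear combinations of) elementary functions of the form \eqref{eq:kElementary} to (linear combinations of) elementary functions, with the blocks $A_i = (a_i,b_i]\times C_i$ replaced by their time-reflected versions $\tilde A_i = [t-b_i,t-a_i)\times C_i$. So it suffices, by density of simple functions in $L_S^2$, continuity of both integral maps (the It\^o isometry \eqref{eq:covarKFoldIntegral} applied to $W$ and to $\tilde W$, noting $\tilde W$ is again a space-time white noise on $[0,t]\times\R$ with the same total mass), and the fact that $L^2$ convergence of the integrands forces $L^2(\Omega)$ convergence of both sides, to verify the claimed equality for a single elementary function $f$.

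For such an $f$ given by \eqref{eq:kElementary} with disjoint blocks $A_1,\dots,A_k$, the left-hand side equals $k!\prod_{i=1}^k \dot W(A_i)$ by definition, while the right-hand side, being the $k$-fold $\tilde W$-integral of the elementary function built from the disjoint blocks $\tilde A_i$, equals $k!\prod_{i=1}^k \tilde W(\tilde A_i)$. Thus the whole statement comes down to the single-integral identity $\dot W(A_i) = \tilde W(\tilde A_i)$ for a product block $A_i = (a,b]\times C$, i.e. $\tilde W\big([t-b,t-a)\times C\big) = \dot W\big((a,b]\times C\big)$. This is immediate from the defining relation $\tilde W([0,s]\times C) = \dot W([t-s,t]\times C)$: writing $\tilde W\big([t-b,t-a)\times C\big) = \tilde W([0,t-a]\times C) - \tilde W([0,t-b]\times C) = \dot W([a,t]\times C) - \dot W([b,t]\times C) = \dot W((a,b]\times C)$, where the half-open/closed distinctions are irrelevant since white noise is atomless. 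One small point to be careful about: the disjointness of the $A_i$ is preserved under time reflection, so the $\tilde A_i$ are again disjoint and the product formula on the $\tilde W$ side is legitimate; and permutation symmetry of $f$ is preserved, so $f(t-\cdot,\cdot)$ indeed lies in $L_S^2$.

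I do not expect a serious obstacle here; the only thing that needs a little care is bookkeeping with the symmetrisation and the density argument — specifically checking that an $L^2$-approximating sequence $f_n \to f$ of elementary functions yields a time-reflected sequence $f_n(t-\cdot,\cdot)$ that is still elementary and still $L^2$-approximates $f(t-\cdot,\cdot)$, so that the defining limits on both sides match up. The reflection being an isometry takes care of this at once. If one wanted to be completely pedantic one would also note the almost-sure (rather than merely $L^2$) nature of the conclusion follows because, having identified the two $L^2(\Omega)$ limits, they agree $\mathbb P$-a.s.; there is no need for a pathwise argument. The hard part, such as it is, is purely notational rather than mathematical.
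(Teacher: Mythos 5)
Your argument is correct and follows essentially the same route as the paper's proof: verify the identity for elementary functions of the form (\ref{eq:kElementary}) directly from the definitions of the $k$-fold integral and of $\tilde{W}$, then pass to general $f\in L_S^2$ by density and the It\^o isometry. You merely spell out the bookkeeping (time-reflection is an isometry preserving disjointness and symmetry, and $\dot W(A)=\tilde W(\tilde A)$ on product blocks) that the paper leaves implicit.
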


\begin{proof}
  The result in the case when $f$ is an elementary function of the form (\ref{eq:kElementary}) follows from the definition of the integral and the definition of $\tilde{W}$.
  For general $f\in L_S^2([0,t]^k\times\mathbb{R}^k)$, let $\{f_n\}_{n\geq 1}$ be a sequence of elementary functions converging to $f$. 
  The result of the lemma holds for $(f_n\cdot W)_k(t)$ for all $n$ and by taking limits we see that the result also holds for $(f\cdot W)_k(t)$.
\end{proof}

\subsection{\texorpdfstring{$L^p$}{Lp} Bounds on Stochastic Integrals}\label{sec:LpBound}

The following estimate is a useful bound on the $L^p(\Omega)$ norm of stochastic integrals; it can be considered as a version of \cite[Lemma 2.4]{CK12} or \cite[Lemma 3.3]{FK09} adapted to the present setting. 
Recall that for brevity we denote $\rd{\y_*^\prime} = \rd{y_2^\p}\ldots\rd{y_n^\p}$ and $c_p\leq 2\sqrt{p}$ is the constant appearing in the Burkholder--Davis--Gundy inequality.

\begin{lemma}
  Define a random field $\big( f(t,\y); (t,\y)\in(0,\infty)\times\mathbb{R}^n \big)$ by
  \[
    f(t,\y) = \int_0^t \int_{\mathbb{R}^n} \Gamma_{t-s}(\y,\y^\prime) w(s,\y^\prime) \;\rd{\y_*^\prime} \:\W{s}{y_1^\prime},
  \]
  for a suitable random field $w$ and $\Gamma_t(\y,\y^\prime)$ is a non-random and non-negative measurable function on $(0,\infty)\times\mathbb{R}^n\times\mathbb{R}^n$ such that $\int_{\mathbb{R}^{n-1}} \Gamma_{t-s}(\y,\y^\prime) w(s,\y^\prime) \;\rd{\y_*^\prime}$ is integrable in the sense of Walsh for all $(t,\y)\in(0,\infty)\times\mathbb{R}^n$.
  Then for all integers $p\geq 2$, $t\geq 0$ and $\y\in\mathbb{R}^n$
  \begin{align*}
    \Vert f(t,\y) \Vert_p^2 \leq c_p^2 \int_0^t \int_\mathbb{R} \bigg( \int_{\mathbb{R}^{n-1}} \Gamma_{t-s}(\y,\y^\prime) \Vert w(s,\y^\prime) \Vert_p \;\rd{\y_*^\prime} \bigg)^2 \mathrm{d}y_1^\prime \mathrm{d}s.
  \end{align*}
  \label{lem:integral}
\end{lemma}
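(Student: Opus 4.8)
The plan is to regard $f(t,\y)$, for fixed $t$ and $\y$, as the terminal value of a one-parameter continuous martingale, apply the Burkholder--Davis--Gundy (BDG) inequality to it, and then collapse the resulting bracket term by two successive uses of Minkowski's integral inequality.

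First I would fix $t>0$ and $\y\in\R^n$ and abbreviate
\[
  g(s,y_1^\p) := \int_{\R^{n-1}} \Gamma_{t-s}(\y,\y^\p)\, w(s,\y^\p)\;\rd{\y_*^\p}.
\]
By the hypothesis on $\Gamma$ and $w$, this random field is integrable in the sense of Walsh on $[0,t]\times\R$, i.e. $g\in\sP_2$ there (predictability is preserved under the ordinary integration against $\rd{\y_*^\p}$). Consequently $M_r := \int_0^r\int_\R g(s,y_1^\p)\;\W{s}{y_1^\p}$, $r\in[0,t]$, is a continuous square-integrable martingale with $M_t = f(t,\y)$ whose quadratic variation, by Walsh's theory of worthy martingale measures, is $\langle M\rangle_t = \int_0^t\int_\R g(s,y_1^\p)^2\;\rd{y_1^\p}\,\rd{s}$.

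Next, the BDG inequality with constant $c_p$ gives
\[
  \V f(t,\y)\V_p = \V M_t\V_p \;\le\; c_p\,\big\V \langle M\rangle_t^{1/2}\big\V_p \;=\; c_p\,\big\V \langle M\rangle_t\big\V_{p/2}^{1/2},
\]
so that $\V f(t,\y)\V_p^2 \le c_p^2\,\big\V \int_0^t\int_\R g(s,y_1^\p)^2\,\rd{y_1^\p}\,\rd{s}\big\V_{p/2}$. Since $p\ge 2$ the space $L^{p/2}(\Omega)$ is a Banach space, so Minkowski's integral inequality applied in $L^{p/2}(\Omega)$, and then once more in $L^p(\Omega)$ to the definition of $g$ (using here that $\Gamma\ge 0$, as it is in every application, $\Gamma$ being the non-negative Dyson kernel $Q_t$), yields
\[
  \big\V \langle M\rangle_t\big\V_{p/2}
  \;\le\; \int_0^t\int_\R \V g(s,y_1^\p)\V_p^2\,\rd{y_1^\p}\,\rd{s}
  \;\le\; \int_0^t\int_\R \bigg( \int_{\R^{n-1}} \Gamma_{t-s}(\y,\y^\p)\,\V w(s,\y^\p)\V_p\;\rd{\y_*^\p} \bigg)^2 \rd{y_1^\p}\,\rd{s}.
\]
Combining the two displays gives the assertion.

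The computations are routine bookkeeping with the two integral-Minkowski steps; the only point requiring genuine care is the first one — verifying that $g$ really belongs to $\sP_2$, hence that $M$ is a bona fide martingale with the stated bracket — which is exactly where the Walsh-integrability assumption on $\int_{\R^{n-1}}\Gamma_{t-s}(\y,\y^\p)w(s,\y^\p)\,\rd{\y_*^\p}$ is used. The non-negativity of $\Gamma$ enters only in the final Minkowski step, to put the bound in the stated form (for signed $\Gamma$ one would obtain $|\Gamma|$ there instead).
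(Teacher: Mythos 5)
Your argument is correct and is essentially the same as the paper's proof: apply the Burkholder--Davis--Gundy inequality to the martingale $r\mapsto\int_0^r\int_{\R^n}\Gamma_{t-s}(\y,\y^\prime)w(s,\y^\prime)\;\rd{\y_*^\prime}\:\W{s}{y_1^\prime}$ and then use Minkowski's integral inequality twice. Your extra remarks on verifying Walsh integrability of the inner integral and on the non-negativity of $\Gamma$ (versus writing $|\Gamma|$ for signed kernels) are sensible but do not change the route.
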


\begin{proof}
  Fix $t$ and $\y$, then by the Burkholder--Davis--Gundy inequality applied to the martingale $\big( \int_0^r \int_{\R^n} \Gamma_{t-s}(\y,\y^\p) w(s,\y^\p) \;\rd{\y_*^\p} \:\W{s}{y_1^\p}$, $r\in [0,t] \big)$, we have
  \[
    \V f(t,\y) \Vert_p^2 \leq c_p^2 \bigg\V \int_0^t \int_\R \bigg( \int_{\R^{n-1}} \Gamma_{t-s}(\y,\y^\prime) w(s,\y^\prime) \;\rd{\y_*^\prime} \bigg)^2 \rd{y_1^\prime} \rd{s} \bigg\V_{p/2}.
  \]
  Applying Minkowski's integral inequality \cite[Corollary 1.30]{Kal02} twice, we obtain 
  \begin{align*}
    \V f(t,\y) \V_p^2 
    &\leq c_p^2 \int_0^t \int_\R \bigg\V \int_{\R^{n-1}} \Gamma_{t-s}(\y,\y^\prime) w(s,\y^\prime) \;\rd{\y_*^\prime} \bigg\V_p^2 \rd{y_1^\prime}\rd{s} \\
    &\leq c_p^2 \int_0^t \int_\R \bigg(\int_{\R^{n-1}} \Gamma_{t-s}(\y,\y^\p) \V w(s,\y^\p) \V_p \;\rd{\y_*^\p} \bigg)^2 \rd{y_1^\p}\rd{s},
  \end{align*}
  as required.
\end{proof}

\begin{lemma}
  For all $k\geq 1$ and $f\in L^2(\Delta_k(t)\times\R^k)$ we have
  \[
    \bigg\V \int_{\Delta_k(t)} \int_{\R^k} f(\mb{s},\mb{y}) \;\mW{k}{\mb{s}}{\mb{y}} \bigg\V_p^2 \leq c_p^{2k} \int_{\Delta_k(t)} \int_{\R^k} f(\mb{s},\y)^2 \;\rd{\y}\rd{\mb{s}}.
  \]
  \label{lem:LpChaos}
\end{lemma}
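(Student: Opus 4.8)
The plan is to realise the symmetrised $k$-fold white-noise integral over the simplex as an ordinary \emph{iterated} It\^o integral, and then to run an induction on $k$ in which the Burkholder--Davis--Gundy inequality and Minkowski's integral inequality are applied once at each level of integration, exactly as in the proof of Lemma \ref{lem:integral}.

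The first step is to prove the identity
\[
  \int_{\Delta_k(t)}\int_{\R^k} f(\mb{s},\mb{y}) \;\mW{k}{\mb{s}}{\mb{y}} = \int_0^t\!\!\int_\R\cdots\int_0^{s_2}\!\!\int_\R f(s_1,\ldots,s_k,y_1,\ldots,y_k) \;\W{s_1}{y_1}\cdots\W{s_k}{y_k}.
\]
By the construction recalled in Section \ref{sec:whiteNoise}, both sides are linear and $L^2(\Omega)$-continuous in $f\in L^2(\Delta_k(t)\times\R^k)$ --- the left by the isometry of the $k$-fold integral, the right by the iterated It\^o isometry, which identifies the $L^2(\Omega)$-norm-squared of the right-hand side with $\int_{\Delta_k(t)}\int_{\R^k} f(\mb{s},\mb{y})^2\,\db{y}\,\db{s}$ --- so it suffices to check the identity on a dense family of elementary functions supported in $\Delta_k(t)\times\R^k$, where it reduces to the bookkeeping observation that the symmetrisation $\tilde f$ restricted to the ordered chamber equals $(k!)^{-1}f$, so that the factor $k!$ in the definition of $(f\cdot W)_k$ for elementary functions is cancelled while the $k!$ mutually disjoint permuted chambers reassemble into the iterated integral. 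I expect this step --- and in particular the verification that, for each fixed outermost pair $(s_k,y_k)$, the inner $(k-1)$-fold integral defines a predictable random field that is integrable in the sense of Walsh --- to be the only genuinely non-routine part; it is a standard but slightly fiddly measurability argument (approximate $f$ by simple functions and check that predictability survives the partial integration).

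Granting the representation, the bound follows by induction on $k$. For $k=1$ it is the one-dimensional instance of Lemma \ref{lem:integral} (Burkholder--Davis--Gundy applied to $r\mapsto\int_0^r\int_\R f\,\W s y$, then Minkowski's integral inequality), giving $\big\Vert\int_0^t\int_\R f\,\W s y\big\Vert_p^2 \le c_p^2\int_0^t\int_\R f(s,y)^2\,\dd y s$. For the inductive step write the $k$-fold iterated integral as $F_k(t) = \int_0^t\int_\R\Phi(s,y)\;\W s y$, where for fixed $(s,y)$ the integrand $\Phi(s,y)$ is the $(k-1)$-fold iterated integral of $(s_1,\ldots,s_{k-1},y_1,\ldots,y_{k-1})\mapsto f(s_1,\ldots,s_{k-1},s,y_1,\ldots,y_{k-1},y)$ over $\Delta_{k-1}(s)\times\R^{k-1}$. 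Applying Burkholder--Davis--Gundy to the outer martingale and then Minkowski's integral inequality, as in the proof of Lemma \ref{lem:integral}, gives
\[
  \Vert F_k(t)\Vert_p^2 \le c_p^2 \int_0^t\!\!\int_\R \Vert\Phi(s,y)\Vert_p^2\;\dd y s,
\]
and the induction hypothesis bounds $\Vert\Phi(s,y)\Vert_p^2$ by the appropriate power of $c_p$ times $\int_{\Delta_{k-1}(s)}\int_{\R^{k-1}} f(s_1,\ldots,s_{k-1},s,y_1,\ldots,y_{k-1},y)^2\,\db{y}'\,\db{s}'$; substituting this in and using Tonelli's theorem to reassemble $\{s_1<\cdots<s_{k-1}<s<t\}$ into $\Delta_k(t)$ closes the induction and yields the stated estimate. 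Beyond the initial representation step the argument is a mechanical $k$-fold iteration of the computation already carried out for Lemma \ref{lem:integral}.
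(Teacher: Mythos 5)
Your proposal is correct and takes essentially the same route as the paper, whose proof simply observes that the multiple integral over $\Delta_k(t)$ coincides with an iterated stochastic integral and then applies the Burkholder--Davis--Gundy and Minkowski integral inequalities $k$ times; your write-up merely makes the representation step and the induction explicit. One small remark: this argument produces the constant $c_p^{2k}$ rather than the $c_p^k$ appearing in the statement (and indeed the paper later uses the lemma with $c_p^{2k}$ in the bound on $\Vert d_{k-1}(t,x,y)\Vert_p^2$), so the exponent in the statement is evidently a typo rather than a defect of your argument.
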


\begin{proof}
  Since multiple stochastic integrals on $\Delta_k(t)$ coincides with iterated stochastic integrals, applying Burkholder--Davis--Gundy inequality and Minkowski's integral inequality $k$ times gives the desired upper bound.
\end{proof}

\subsection{Predictability of Random Fields}

Recall that the Walsh integral is defined for random fields in $\mathscr{P}_2$, see Section \ref{sec:whiteNoise} above, therefore it is convenient to have a set of conditions to verify the predictability of a random field.
The following result is from \cite[Proposition 3.1]{CD13} which is an extension of \cite[Proposition 2]{DF98} to space-time white noise.

\begin{proposition}\label{prop:predictability}
  Let $t>0$ and suppose a random field $\big(f(s,y), (s,y)\in (0,t)\times\mathbb{R}\big)$ satisfies
  \begin{itemize}
    \item[(i)] $f$ is adapted, that is for all $(s,y) \in (0,t)\times\mathbb{R}$, $f(s,y)$ is $\mathscr{F}_s$-measurable;
    \item[(ii)] for all $(s,y)\in(0,t)\times\mathbb{R}$, $\Vert f(s,y)\Vert_2 < \infty$ and $(s,y)\mapsto f(s,y)$ is $L^2(\Omega)$-continuous on $(0,t)\times\mathbb{R}$;
    \item[(iii)] $\int_0^t\int_\mathbb{R} \Vert f(s,y) \Vert_2^2 \;\dd{y}{s} < \infty$.
  \end{itemize}
  Then $f\in\mathscr{P}_2$ and  
  \[
    \int_0^t \int_\mathbb{R} f(s,y) \;\W{s}{y},
  \]
  is a well-defined Walsh integral.
\end{proposition}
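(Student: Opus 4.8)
The plan is to realise $f\cdot\mathbf{1}_{(0,t)}$ as a limit, in $L^2(\Omega\times(0,\infty)\times\mathbb{R})$, of simple random fields of the type used to define the Walsh integral in Section~\ref{sec:whiteNoise}. The predictable $\sigma$-algebra is a bona fide sub-$\sigma$-algebra of $\mathscr{B}((0,\infty)\times\mathbb{R})\otimes\mathscr{F}$, so $\mathscr{P}_2$ is a closed subspace of $L^2(\Omega\times(0,\infty)\times\mathbb{R})$; hypothesis (iii) places $f\cdot\mathbf{1}_{(0,t)}$ into the latter space, and therefore it suffices to produce such an approximating sequence, after which the existence of the displayed Walsh integral is immediate from Walsh's construction.

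First I would localise to a compact rectangle $Q_R:=[1/R,\,t-1/R]\times[-R,R]$, with $R$ large. On $Q_R$ the $L^2(\Omega)$-valued map $(s,y)\mapsto f(s,y)$ is continuous by (ii), hence \emph{uniformly} continuous; let $\omega_R$ denote its modulus of continuity there. Partition $[1/R,t-1/R]$ into consecutive intervals $(a^n_{k-1},a^n_k]$ of mesh $\le 1/n$ and $[-R,R]$ into intervals $B^n_j$ of length $\le 1/n$, pick $y^n_j\in B^n_j$, and set
\[
  f_n(s,y):=\sum_{k,j} f(a^n_{k-1},y^n_j)\,\mathbf{1}_{(a^n_{k-1},a^n_k]}(s)\,\mathbf{1}_{B^n_j}(y).
\]
By (i) each coefficient $f(a^n_{k-1},y^n_j)$ is $\mathscr{F}_{a^n_{k-1}}$-measurable, so $f_n$ is (up to truncating each coefficient at a level tending to infinity, which is harmless in $L^2(\Omega)$ by the finiteness assumed in (ii)) a simple random field, and hence predictable. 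Since $\|f_n(s,y)-f_m(s,y)\|_2\le\omega_R(\sqrt{2}/n)+\omega_R(\sqrt{2}/m)$ for all $(s,y)\in Q_R$, the sequence $\{f_n\}_n$ is Cauchy in $L^2(\Omega\times Q_R)$; its limit $\hat f_R$ is predictable (an $L^2$-limit of predictable fields), jointly measurable and adapted (a subsequence converges a.e.), and satisfies $\hat f_R(s,y)=f(s,y)$ in $L^2(\Omega)$ for almost every $(s,y)\in Q_R$. Hence $\hat f_R\cdot\mathbf{1}_{Q_R}$, extended by zero, lies in $\mathscr{P}_2$ and is a version of $f\cdot\mathbf{1}_{Q_R}$.

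Next I would remove the localisation. The $Q_R$ increase to $(0,t)\times\mathbb{R}$, and for $R'>R$ the fields $\hat f_{R'}$ and $\hat f_R$ agree a.e.\ on $Q_R$ (both equal $f$ pointwise a.s.), so by Fubini's theorem
\[
  \big\| \hat f_{R'}\mathbf{1}_{Q_{R'}}-\hat f_R\mathbf{1}_{Q_R}\big\|_{L^2(\Omega\times(0,\infty)\times\mathbb{R})}^2
  = \int_{Q_{R'}\setminus Q_R}\|f(s,y)\|_2^2\,\mathrm{d}s\,\mathrm{d}y,
\]
which tends to $0$ as $R\to\infty$ by (iii) and dominated convergence. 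Thus $\{\hat f_R\cdot\mathbf{1}_{Q_R}\}_R$ is Cauchy in $L^2(\Omega\times(0,\infty)\times\mathbb{R})$; its limit lies in $\mathscr{P}_2$ by closedness and is a version of $f\cdot\mathbf{1}_{(0,t)}$. Therefore $f\cdot\mathbf{1}_{(0,t)}\in\mathscr{P}_2$, and the displayed integral is a well-defined Walsh integral.

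The part I expect to be the main obstacle is the measurability bookkeeping rather than any analytic estimate: confirming that the $f_n$ are admissible Walsh integrands (this is precisely where adaptedness (i) and the use of \emph{left} time-endpoints are needed), and then the delicate point that the jointly measurable modifications produced on the various $Q_R$ really do fit together into a single jointly measurable, adapted modification of $f$ on $(0,t)\times\mathbb{R}$ — which ultimately rests on the usual completeness and right-continuity hypotheses on the filtration. Once this is handled, everything else reduces to the elementary $L^2$ estimates above built from the uniform continuity in (ii) and the integrability in (iii). An alternative route, repackaging the same two measurability upgrades, is to extract a jointly measurable modification of $f$ directly from its stochastic continuity and then mollify it in the time variable to obtain predictable approximants.
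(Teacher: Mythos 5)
The paper itself gives no proof of this proposition---it is quoted from \cite[Proposition 3.1]{CD13}, which in turn extends \cite[Proposition 2]{DF98}---so there is no internal argument to compare against; your proposal is, in effect, a correct reconstruction of the standard proof behind those citations: approximate on compact rectangles by simple predictable fields anchored at left time-endpoints (with harmless truncation of the coefficients), use the uniform $L^2(\Omega)$-continuity from (ii) to get convergence in $L^2(\Omega\times Q_R)$, and then remove the localisation using (iii) and the closedness of $\mathscr{P}_2$. The one caveat, which you already flag yourself, is that the construction literally yields a predictable field agreeing with $f\1_{(0,t)}$ for $\mathrm{d}\mathbb{P}\times\rd{s}\times\rd{y}$-almost every point rather than predictability of $f$ itself (joint measurability of $f$ is not among the hypotheses); since the Walsh integral depends only on this equivalence class in $L^2(\Omega\times(0,t)\times\R)$, this is precisely the sense in which the conclusion is stated and used, and your argument is complete.
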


In the sequel we will need to integrate functions of the form: for some random field $M$, let $f(s,y_1^\prime) = \int_{\mathbb{R}^{n-1}} Q_{t-s}(\y,\y^\prime) M(s,\y^\prime) \;\rd{\y_*^\prime}$ where we recall $Q_t$ is the transition density of Dyson Brownian motion defined in the introduction. (Note that we have suppressed the dependency of $f$ on $t$ and $\y$ to keep the notation simple).
The following proposition provides convenient conditions to verify the integrability of such a random field.

\begin{proposition}
  Let $t>0$ and $\y\in\mathbb{R}^n$.
  Suppose the random field $\big(M(s,\y^\prime), (s,\y^\prime)\in(0,t)\times\mathbb{R}^n\big)$ satisfies
  \begin{itemize}
    \item[(i)] $M$ is adapted i.e., for all $(s,\y^\prime)\in(0,t)\times\mathbb{R}^n$, $M(s,\y^\prime)$ is $\mathscr{F}_s$-measurable;
    \item[(ii)] $(s,\y^\prime) \mapsto M(s,\y^\prime)$ is $L^2(\Omega)$-continuous on $(0,t)\times\mathbb{R}^n$;
    \item[(iii)] $\sup_{(s,\y^\prime)\in(0,t)\times\mathbb{R}^n} \Vert M(s,\y^\prime) \Vert_2 < \infty$.
 \end{itemize}
 Then $\big(f(s,z), (s,z)\in(0,t)\times\mathbb{R}\big)$ defined by $f(s,y_1^\prime) = \int_{\mathbb{R}^{n-1}} Q_{t-s}(\y,\y^\prime) M(s,\y^\prime) \;\rd{\y_*^\prime}$ is in $\mathscr{P}_2$ and 
 \[
  \int_0^t \int_\mathbb{R} f(s,y_1^\prime) \;\W{s}{y_1^\prime},
 \]
  is a well-defined Walsh integral.
  \label{prop:predictability2}
\end{proposition}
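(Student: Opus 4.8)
The plan is to verify the three hypotheses of Proposition \ref{prop:predictability} for the random field $f(s,y_1^\prime) = \int_{\mathbb{R}^{n-1}} Q_{t-s}(\y,\y^\prime)\, M(s,\y^\prime)\;\rd{\y_*^\prime}$ (with $t$ and $\y$ fixed), from which the conclusion follows immediately. First I would check adaptedness: since $M(s,\cdot)$ is $\mathscr{F}_s$-measurable by hypothesis (i) and $Q_{t-s}(\y,\cdot)$ is a deterministic kernel, the integral over $\mathbb{R}^{n-1}$ preserves $\mathscr{F}_s$-measurability (approximating the integral by Riemann-type sums of $\mathscr{F}_s$-measurable random variables and passing to an a.s.\ limit), so $f(s,y_1^\prime)$ is $\mathscr{F}_s$-measurable.

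Next I would establish the $L^2(\Omega)$-bound and continuity. For the bound, by Minkowski's integral inequality,
\[
  \V f(s,y_1^\prime) \V_2 \leq \int_{\mathbb{R}^{n-1}} Q_{t-s}(\y,\y^\prime) \,\V M(s,\y^\prime) \V_2 \;\rd{\y_*^\prime} \leq \Big( \sup_{(s,\y^\prime)} \V M(s,\y^\prime)\V_2 \Big) \int_{\mathbb{R}^{n-1}} Q_{t-s}(\y,\y^\prime) \;\rd{\y_*^\prime},
\]
and here one uses the explicit form of $Q_t$ together with the estimates on the transition density from Section \ref{sec:estimatesQt} (Theorem \ref{thm:DysonKernelCty}) to control $\int_{\mathbb{R}^{n-1}} Q_{t-s}(\y,\y^\prime)\;\rd{\y_*^\prime}$ as a function of $(s,y_1^\prime)$; the point is that this partial integral of the Dyson kernel is finite and, crucially, its square is integrable over $(s,y_1^\prime) \in (0,t)\times\mathbb{R}$, which gives hypothesis (iii) of Proposition \ref{prop:predictability}. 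For $L^2(\Omega)$-continuity of $(s,y_1^\prime)\mapsto f(s,y_1^\prime)$, I would write the difference $f(s,y_1^\prime) - f(s',z_1')$ and split it using the triangle inequality into a term where the kernel is fixed and $M$ varies — controlled by hypothesis (ii), the $L^2(\Omega)$-continuity of $M$, via dominated convergence using the uniform bound (iii) — and a term where $M$ is (uniformly) bounded in $L^2$ and the kernel varies, controlled by the continuity of $(s,y_1^\prime)\mapsto \int_{\mathbb{R}^{n-1}} Q_{t-s}(\y,\y^\prime)\;\rd{\y_*^\prime}$ in an appropriate ($L^1$ or pointwise-plus-domination) sense, again drawing on the $Q_t$ estimates of Section \ref{sec:estimatesQt}.

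Having verified (i)--(iii) of Proposition \ref{prop:predictability} for $f(\cdot,\cdot)1_{(0,t)}(\cdot)$, that proposition yields directly that $f \in \mathscr{P}_2$ and that $\int_0^t\int_{\mathbb{R}} f(s,y_1^\prime)\;\W{s}{y_1^\prime}$ is a well-defined Walsh integral, which is the claim. The main obstacle I anticipate is the continuity-in-time part of step two: as $s \uparrow t$ the kernel $Q_{t-s}$ develops a singularity (it approaches a delta-type object in the relevant variables), so establishing $L^2(\Omega)$-continuity of $f$ up to, and the integrability of $\V f(s,y_1^\prime)\V_2^2$ near, the time endpoint requires the quantitative decay and continuity estimates for $\int_{\mathbb{R}^{n-1}} Q_{t-s}(\y,\y^\prime)\;\rd{\y_*^\prime}$ from Section \ref{sec:estimatesQt} rather than any soft argument; everything else is a routine application of Minkowski's inequality and dominated convergence using the uniform $L^2$ bound on $M$.
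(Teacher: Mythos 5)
Your proposal follows essentially the same route as the paper: verify the three hypotheses of Proposition \ref{prop:predictability}, with adaptedness coming from the deterministic kernel, the square-integrability of $\V f(s,y_1^\p)\V_2^2$ over $(0,t)\times\R$ from Minkowski's inequality together with the bound $\int_\R K_{t-s}(\y,y)^2\,\rd{y}\leq C_4(t-s)^{-1/2}$ of Lemma \ref{lem:integralofKSquared} (which makes the singularity at $s=t$ integrable), and $L^2(\Omega)$-continuity by the same triangle-inequality split, the kernel-varying term being handled by pointwise continuity of $Q$ plus a dominating function, which the paper extracts from the Harish--Chandra bounds (\ref{eq:HCIZ}) and (\ref{eq:supHCIntegrand}). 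One minor calibration: Proposition \ref{prop:predictability} only asks for $L^2$-continuity on the open set $(0,t)\times\R$, so no continuity up to the endpoint $s=t$ is required — the blow-up of $Q_{t-s}$ there enters only through the square-integrability condition, exactly where your cited estimate deals with it.
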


\begin{proof}
  We will show that $f$ satisfies the three assumptions of Proposition \ref{prop:predictability}.
  Since $Q_{t-s}(\y,\y^\prime)$ is continuous and deterministic, $Q_{t-s}(\y,\y^\prime) M(s,\y^\prime)$ is adapted by (i) and so the integral $\int_{\mathbb{R}^{n-1}} Q_{t-s}(\y,\y^\prime) M(s,\y^\prime) \;\rd{\y_*^\prime}$ is also adapted. 
  Assumption (iii) of Proposition \ref{prop:predictability} follows from (iii) above since by Minkowski's integral inequality and Lemma \ref{lem:integralofKSquared} below, we have for some constant $C$
  \begin{align*}
    \int_0^t \int_\mathbb{R} \Vert f(s,y_1^\prime) \Vert_2^2 & \;\rd{y_1^\prime} \rd{s} \\
    &\leq \int_0^t \int_\mathbb{R} \bigg( \int_{\mathbb{R}^{n-1}} Q_{t-s}(\y,\y^\prime) \Vert M(s,\y^\prime) \Vert_2 \;\rd{\y_*^\prime} \bigg)^2 \rd{y_1^\prime} \rd{s} \\
    &\leq \sup_{(s,\y^\prime)\in(0,t)\times\mathbb{R}^n} \Vert M(s,\y^\prime) \Vert_2^2 \int_0^t \int_\mathbb{R} \bigg( \int_{\mathbb{R}^{n-1}} Q_{t-s}(\y,\y^\prime) \;\rd{\y_*^\prime} \bigg)^2 \rd{y_1^\prime}\rd{s} \\
    &\leq 2C t^{1/2} \sup_{(s,\y^\prime)\in(0,t)\times\mathbb{R}^n} \Vert M(s,\y^\prime) \Vert_2^2.
  \end{align*}
  It remains to show the $L^2(\Omega)$-continuity of $f$.
  We wish to show that for each $(s,y^\prime) \in(0,t)\times\mathbb{R}$, $\lim_{(u,z)\to(s,y^\prime)} \Vert f(u,z) - f(s,y^\prime) \Vert_2 = 0$.
  Let $u\in[s/2,(t+s)/2]$ then by Lemma \ref{lem:QtGaussianBound} below we have for a constant $C$ depending on $n$ that
  \begin{align*}
    Q_{t-u}(\y,\mb{z}) 
      &\leq C(t-u)^{-n/2} \prod_{i=1}^n e^{y_i^2/2(t-u)} e^{-z_i^2/8(t-u)} \\
      &\leq \frac{2^{n/2} C}{(t-s)^{n/2}} \prod_{i=1}^n e^{y_i^2/(t-s)} \prod_{i\neq 1} e^{-z_i^2/8(t-s/2)}.
  \end{align*}
  The last line is integrable with respect to $\rd{\mb{z}_*} = \rd{z_2}\ldots\rd{z_n}$ and so by the dominated convergence theorem, the continuity of $Q_t$ and assumption (ii), the right hand side of 
  \begin{align*}
    \Vert & f(u,z_1) - f(s,y_1^\prime) \Vert_2 \\
    &\leq \sup_{(u,\y) \in [s/2, (t+s)/2] \times \R^n} \Vert M(u,\y) \Vert_2 \int_{\mathbb{R}^{n-1}} \big|Q_{t-u}\big(\y,(z_1,\mb{z}_*)\big) - Q_{t-s}\big(\y,(y_1^\prime,\mb{z}_*)\big)\big| \;\rd{\mb{z}_*} \notag \\
    &\qquad+ \int_{\R^{n-1}} Q_{t-s}\big(\y,(y_1^\prime,\mb{z}_*)\big) \V M\big(u,(z_1,\mb{z}_*)\big) - M\big(s,(y_1^\p,\mb{z}_*)\big) \V_2 \;\db{z_*}
  \end{align*}
  converges to zero as $(u,z_1) \to (s,y_1^\prime)$. 
  Finally, an application of Proposition \ref{prop:predictability} completes the proof.
\end{proof}

\subsection{Non-intersecting Brownian Motions}\label{sec:nonIntersectingBM}

Dyson Brownian motion introduced in \cite{Dy62} can be realised as the eigenvalues of Hermitian Brownian motion, an $n\times n$ Hermitian matrix whose entries are (up to the Hermitian condition) independent standard complex Brownian motions. 
The eigenvalues of such a matrix is a Markov process with state space $W_n$ with transition density $Q_t(\x,\y)$. 
It also arises as the Doob $h$-transform of Brownian motion killed at the boundary $\partial W_n$ with $h(\x) = \Delta(\x)$ (see for example \cite{Gr99} and \cite{KT07}).

One can construct bridges of Dyson Brownian motion, which we will call Dyson Brownian bridge or non-intersecting Brownian bridges, using the framework of \cite{FPY92}.
For $\x$, $\y\in W_n$, a collection of non-intersecting Brownian bridges $X_s = (X_s^1,\ldots,X_s^n)$, $0\leq s\leq t$, starting at $\x$ at time 0 and ending at $\y$ at time $t$ is a process whose law is absolutely continuous on $\sigma(X_u; u\leq s)$ for any $s<t$ to that of Dyson Brownian motion started at $\x$ with Radon--Nikodym derivative equal to
\[
 \frac{Q_{t-s}(X_s,\y)}{Q_t(\x,\y)}.
\]
In particular, for $0 < s_1 < \ldots < s_k < t$, the law of $(X_{s_1},\ldots,X_{s_k})$ is given by the density
\begin{align*}
  \frac{Q_{s_1}(\x,\y^1) \prod_{i=2}^k Q_{s_i-s_{i-1}}(\y^{i-1},\y^i) Q_{t-s_k}(\y^k,\y)}{Q_t(\x,\y)} 
\end{align*}
The above is well defined at the boundary of the Weyl chamber, see Section 2 of \cite{OW11}; in particular by (\ref{eq:QtBoundary}), taking limits as $\x\to a\1$, $\y\to b\1$ where $\1 = (1,\ldots,1)$ one obtains
\begin{align*}
  c_n \frac{\Delta(\y^1) \Delta(\y^k) \prod_{j=1}^n p_{s_1}(a-y_j^1) \prod_{i=2}^k p_n^*(s_i-s_{i-1},\y^{i-1},\y^i)  \prod_{j=1}^n p_{t-s_k}(b-y_j^k)}{s_1^{n(n-1)/2} (t-s_k)^{n(n-1)/2} t^{-n(n-1)/2} p_t(a-b)^n},
\end{align*}
where $c_n^{-1} = \prod_{i=1}^{n-1} i!$. 
The $k$-point correlation function $R_k(\mb{s},\y_1;t,\x,\y)$ with $\y_1 = (y_1^1,\ldots,y_1^k)$ appearing in (\ref{eq:KnChaos}) is defined as
\begin{align}
 \bigl ((n-1)!\bigr)^{-k} \int_{(\mathbb{R}^{n-1})^k} \frac{Q_{s_1}(\x,\y^1) \prod_{i=2}^k Q_{s_i-s_{i-1}}(\y^{i-1},\y^i) Q_{t-s_k}(\y^k,\y)}{Q_t(\x,\y)} \;\prod_{i=1}^k\prod_{j=2}^n \rd{y_j^i}.
  \label{eq:Rk}
\end{align}
For each $i$ we have chosen to leave the first coordinate of $\y^i$ and integrated out the rest but this choice is arbitrary by symmetry. 
Note that this is also the reason for the form of the stochastic integral term in (\ref{eq:MnDelta}).

In the sequel we will need to bound integrals of the square of the $k$-point correlation function $R_k$.
Correlation functions of densities given by a product of determinants have been studied extensively in the context of determinantal point processes, see for example \cite{Joh06} and \cite{Bo11}.
They can be expressed as a determinant of a matrix whose entries are given by some kernel function.
However for general start and end points $\x$ and $\y$ this kernel function is difficult to compute, but since all we need is the integral of the square of $R_k$ it is not necessary to compute $R_k$ explicitly and so we will not pursue this.
Instead, the next two results proved in \cite{OW11} which expresses the integral of $R_k^2$ in terms of intersection local times of Brownian bridges will be used. 
Let $X = (X^1,\ldots,X^n)$ and $Y = (Y^1,\ldots,Y^n)$ be two independent copies of a collection of $n$ non-intersecting Brownian bridges which start at $\x$ at time 0 and end at $\y$ at time $t$ and let $\E_{\x,\y;t}^{X,Y}$ denote the corresponding expectation of the joint law of the bridges.
Let $L_t(X^i-Y^j)$ be the local time at 0 of the difference $X^i-Y^j$.
Then we have, see \cite[Lemma 4.1]{OW11},
\begin{lemma}
Fix $n\geq 1$. For all integers $k\geq 1$ and all $t>0$, $\x$, $\y\in W_n$ the following holds
  \[
    \int_{\Delta_k({t})}\int_{\R^k} R_k(\mb{s},\y^\prime; t,\x,\y)^2 \;\db{y}^\prime\db{s} = \frac{1}{k!} \E_{\x,\y;t}^{X,Y}\Big[\Big( \sum_{i,j=1}^n L_t(X^i-Y^j) \Big)^k \Big].
  \]
  \label{lem:localTimeFormula}
\end{lemma}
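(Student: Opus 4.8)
The plan is to square the correlation function, realise the square by means of two independent copies $X$ and $Y$ of the collection of non-intersecting bridges, integrate out the spatial variables to obtain collision densities of the differences $X^i-Y^j$, and then recognise the remaining time integral as a moment of intersection local time via a Kac-type moment formula. To begin, I would make the probabilistic meaning of $R_k$ precise. Write $\mu^X_s:=\sum_{i=1}^n\delta_{X^i_s}$ for the empirical measure of the bridges at time $s$, and let $\mb i=(i_1,\dots,i_k)$, $\mb j=(j_1,\dots,j_k)$ range over $\{1,\dots,n\}^k$. The construction recalled in Section~\ref{sec:nonIntersectingBM} is precisely the statement that, for $0<s_1<\cdots<s_k<t$, the measure $\E_{\x,\y;t}^X[\mu^X_{s_1}\otimes\cdots\otimes\mu^X_{s_k}]$ on $\R^k$ has density $\y^\prime\mapsto R_k(\mb s,\y^\prime;t,\x,\y)$, equivalently $R_k(\mb s,\y^\prime;t,\x,\y)=\sum_{\mb i}p^X_{\mb s,\mb i}(\y^\prime)$ where $p^X_{\mb s,\mb i}$ is the joint density of $(X^{i_1}_{s_1},\dots,X^{i_k}_{s_k})$. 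Since $X$ and $Y$ are independent with the same law,
\[
  R_k(\mb s,\y^\prime;t,\x,\y)^2=\sum_{\mb i,\mb j}p^X_{\mb s,\mb i}(\y^\prime)\,p^Y_{\mb s,\mb j}(\y^\prime),
\]
a finite sum, so by Fubini and the elementary fact that $\int_{\R^k}fg$ equals the density at the origin of the difference of two independent vectors with densities $f$ and $g$,
\[
  \int_{\R^k}R_k(\mb s,\y^\prime;t,\x,\y)^2\,\db y^\prime=\sum_{\mb i,\mb j}q^{\mb i,\mb j}_{\mb s}(\mathbf{0}),
\]
where $q^{\mb i,\mb j}_{\mb s}$ denotes the joint density of $\big(X^{i_1}_{s_1}-Y^{j_1}_{s_1},\dots,X^{i_k}_{s_k}-Y^{j_k}_{s_k}\big)$. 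Integrating over $\mb s\in\Delta_k(t)$ gives
\[
  \int_{\Delta_k(t)}\int_{\R^k}R_k(\mb s,\y^\prime;t,\x,\y)^2\,\db y^\prime\db s=\sum_{\mb i,\mb j}\int_{\Delta_k(t)}q^{\mb i,\mb j}_{\mb s}(\mathbf{0})\,\db s.
\]

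It then remains to match the right-hand side with a local-time moment. Set $\Lambda_s:=\sum_{i,j=1}^nL_s(X^i-Y^j)$, a continuous increasing process with $\Lambda_0=0$; since $s\mapsto\Lambda_s$ is continuous the measure $\mathrm d\Lambda$ is non-atomic, so $\Lambda_t^k=k!\int_{\Delta_k(t)}\mathrm d\Lambda_{s_1}\cdots\mathrm d\Lambda_{s_k}$, and expanding $\mathrm d\Lambda_{s_r}=\sum_{i_r,j_r}\mathrm dL_{s_r}(X^{i_r}-Y^{j_r})$ in each factor yields
\[
  \Big(\sum_{i,j=1}^nL_t(X^i-Y^j)\Big)^k=k!\sum_{\mb i,\mb j}\int_{\Delta_k(t)}\prod_{r=1}^k\mathrm dL_{s_r}(X^{i_r}-Y^{j_r}).
\]
Taking $\E_{\x,\y;t}^{X,Y}$ and using the Kac-type moment formula for local times, namely that $\E_{\x,\y;t}^{X,Y}\big[\prod_{r=1}^k\mathrm dL_{s_r}(X^{i_r}-Y^{j_r})\big]=q^{\mb i,\mb j}_{\mb s}(\mathbf{0})\,\db s$, one obtains $\E_{\x,\y;t}^{X,Y}\big[\big(\sum_{i,j}L_t(X^i-Y^j)\big)^k\big]=k!\sum_{\mb i,\mb j}\int_{\Delta_k(t)}q^{\mb i,\mb j}_{\mb s}(\mathbf{0})\,\db s$, which by the previous display is exactly $k!$ times the left-hand side of the lemma.

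The real work, and the step I expect to be the main obstacle, is making the local-time manipulations rigorous. One must verify that each difference $X^i-Y^j$ is a continuous semimartingale on $[0,t]$ carrying a jointly measurable occupation-density local time, and then justify the multi-time moment formula by approximating $\mathrm dL_s(Z)$ by $(2\varepsilon)^{-1}\mathbf{1}\{|Z_s|<\varepsilon\}\,\mathrm ds$ and letting $\varepsilon\downarrow0$; this requires continuity of $\mathbf z\mapsto q^{\mb i,\mb j}_{\mb s}(\mathbf z)$ near $\mathbf{0}$ together with a bound on $q^{\mb i,\mb j}_{\mb s}(\mathbf{0})$ that is integrable over $\Delta_k(t)$, of the form $\prod_r(\text{consecutive time gap})^{-1/2}$ up to constants. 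Since the $q^{\mb i,\mb j}_{\mb s}$ are built from ratios of products of the Gaussian-type kernels $p_n^*$, such estimates are available but need care near the faces of the simplex. Finally, when $\x$ or $\y$ lies on $\partial W_n$ the bridges start or end at coincident points; I would establish the identity first for $\x,\y\in W_n^\circ$ and then pass to the boundary, using that both sides are continuous in $(\x,\y)$ --- the left side through the regularity of the correlation functions recalled in Section~\ref{sec:nonIntersectingBM}, the right side through weak continuity of the law of the bridges in their endpoints.
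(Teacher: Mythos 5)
Your plan is sound and follows the same route as the argument this lemma rests on: the paper does not reprove it but imports it from \cite{OW11}, where the identity is likewise obtained by squaring $R_k$ (a finite sum of joint densities of $(X^{i_1}_{s_1},\ldots,X^{i_k}_{s_k})$) using two independent copies $X$, $Y$ of the bridge system, integrating out the spatial variables to get the collision densities of the differences $X^{i_r}-Y^{j_r}$ at the origin, and matching the resulting time integrals over $\Delta_k(t)$ with the $k$th moment of $\sum_{i,j}L_t(X^i-Y^j)$ through a Kac-type moment formula. You also correctly isolate the remaining technical work (the multi-time moment formula via the occupation-time approximation, with density bounds integrable over the simplex, and the passage to endpoints on $\partial W_n$ by continuity), and your normalisation of the local time as the occupation density with respect to $\rd{s}$ is the one under which the stated identity holds without extra constants.
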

The following is used to bound the above moments of local times. 

\begin{lemma}
  For all $a\geq 1$ and $0<t\leq T$, there exists  constants $C := C(a,n,T)$  and  $C^\prime:= C^\prime(a,n,T)$ such that
  \[
    \sup_{\x,\y\in W_n} \bigg(\frac{p_n^*(t,\x,\y)}{\Delta(\x)\Delta(\y)}\bigg)^2 \E_{\x,\y;t}^{X,Y}\Big[ \exp\Big(a\! \sum_{i,j=1}^n L_t(X^i-Y^j) \Big)\Big] \leq C t^{-n^2/2} \frac{p_n^*(t,\x,\y)}{\Delta(\x)\Delta(\y)}  \leq C^\prime t^{-n^2}.
  \]
  \label{lem:localTimeExpMoments}
\end{lemma}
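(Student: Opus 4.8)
The plan is to derive this from the exact identity of Lemma~\ref{lem:localTimeFormula} combined with the Gaussian bounds on the Dyson kernel. Write $\phi_\tau(\mb{u},\mb{v}):=p_n^*(\tau,\mb{u},\mb{v})/(\Delta(\mb{u})\Delta(\mb{v}))$, extended to $\R^n\times\R^n$ by symmetry; this is the smooth kernel of \cite[Lemma 5.11]{BBO09}, and $Q_\tau(\mb{u},\mb{v})=\Delta(\mb{v})^2\phi_\tau(\mb{u},\mb{v})$. By Lemma~\ref{lem:localTimeFormula}, Taylor-expanding the exponential,
\[
  \bigg(\frac{p_n^*(t,\x,\y)}{\Delta(\x)\Delta(\y)}\bigg)^2\E_{\x,\y;t}^{X,Y}\Big[\exp\Big(a\sum_{i,j=1}^n L_t(X^i-Y^j)\Big)\Big]
  =\sum_{k\ge 0}a^k\,\phi_t(\x,\y)^2\int_{\Delta_k(t)}\int_{\R^k}R_k(\mb{s},\y^\prime;t,\x,\y)^2\;\db{y}^\prime\db{s},
\]
so it suffices to bound the $k$-th summand by $C_n\,t^{-n^2}(C_n t^{1/2})^k/\Gamma(\tfrac12 k+1)$, uniformly over $\x,\y\in W_n$; the series then converges for every $a>0$ and $t>0$, which gives the statement, the resulting Mittag--Leffler sum being precisely what produces the error function in~(\ref{eq:MnPthMoment}). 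I would also record at the outset the purely deterministic bound $\sup_{\x,\y\in W_n}\phi_t(\x,\y)\le C_n t^{-n^2/2}$, which follows from Brownian scaling $\phi_t(\x,\y)=t^{-n^2/2}\phi_1(\x/\sqrt t,\y/\sqrt t)$ together with the smoothness and Gaussian decay of $\phi_1$; this disposes of the $k=0$ term and explains the overall power $t^{-n^2}$.

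For the $k$-th summand I would start from the explicit formula~(\ref{eq:Rk}) for $R_k$. Telescoping the Vandermonde determinants in the product of killed-Brownian kernels $p_n^*$ appearing there, and using that the factor $\phi_t(\x,\y)$ exactly cancels the denominator $p_n^*(t,\x,\y)$, turns $\phi_t(\x,\y)R_k$ into an iterated Chapman--Kolmogorov-type integral
\[
  \phi_t(\x,\y)\,R_k(\mb{s},\y^\prime;t,\x,\y)
  =A_n^k\int_{(\R^{n-1})^k}\Big(\prod_{i=1}^k\Delta(\y^i)^2\Big)\phi_{s_1}(\x,\y^1)\Big(\prod_{i=2}^k\phi_{s_i-s_{i-1}}(\y^{i-1},\y^i)\Big)\phi_{t-s_k}(\y^k,\y)\prod_{i=1}^k\prod_{j=2}^n\rd{y_j^i}.
\]
Into each factor I would feed the Harish--Chandra/Itzykson--Zuber estimate $\phi_\tau(\mb{u},\mb{v})\le c_n\,\tau^{-n^2/2}\prod_{a=1}^n e^{-(u_a-v_a)^2/2\tau}$ (this is~(\ref{eq:HCIZ}) together with~(\ref{eq:supHCIntegrand})); then the integrations over the variables $y_j^i$ with $j\ge 2$, followed by squaring $\phi_t R_k$, integrating $(y_1^1,\dots,y_1^k)$ over $\R^k$, and finally integrating $\mb{s}$ over $\Delta_k(t)$, are all Gaussian integrals against polynomial (Vandermonde) weights and, at the last step, a Dirichlet/Beta integral in the time increments. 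Carrying these out should yield exactly the claimed bound, with the $\Gamma(\tfrac12 k+1)$-type decay produced by the time integral over $\Delta_k(t)$. (For $n=1$ this is elementary and gives $\phi_t^2\int_{\Delta_k(t)}\int_{\R^k} R_k^2\le t^{-1}(\sqrt t/2)^k/(2\sqrt\pi\,\Gamma(\tfrac12 k+\tfrac12))$, which already exhibits the structure.)

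The hard part is precisely this term-by-term estimate. The crude bound $\phi_\tau\le c_n\tau^{-n^2/2}$ is useless, because it is not integrable against the polynomial weights $\Delta(\y^i)^2$; one must keep the Gaussian tails of $\phi_\tau$ and carefully track, uniformly in $\x$ and $\y$, how these tails absorb the Vandermonde factors through all $k$ nested integrations. One must also check that the powers of the time increments $s_1,\,s_2-s_1,\,\dots,\,t-s_k$ that survive the spatial integrations are strictly greater than $-1$, so that the Dirichlet integral over $\Delta_k(t)$ converges and contributes exactly the super-exponential (Mittag--Leffler) decay in $k$ needed for summability. This bookkeeping is the content of the estimates on $Q_t$ developed in Section~\ref{sec:estimatesQt} (see Theorem~\ref{thm:DysonKernelCty}); granting those, the lemma follows by the summation described above.
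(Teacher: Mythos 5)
Your plan cannot be checked against an in-paper argument, because the paper does not prove this lemma: both Lemma \ref{lem:localTimeFormula} and Lemma \ref{lem:localTimeExpMoments} are imported from \cite{OW11}, where the bound is obtained by working directly with the intersection local times of the non-intersecting bridges rather than by resumming the chaos series. Your preparatory steps are nonetheless correct: writing $\phi_\tau(\mb{u},\mb{v})=p_n^*(\tau,\mb{u},\mb{v})/(\Delta(\mb{u})\Delta(\mb{v}))$, the expansion of the exponential moment via Lemma \ref{lem:localTimeFormula} (justified by monotone convergence since the local times are non-negative), the scaling bound $\sup_{\x,\y}\phi_t(\x,\y)\le C_n t^{-n^2/2}$, and the telescoping identity that turns $\phi_t(\x,\y)R_k$ into the chain $A_n^k\int Q_{s_1}(\x,\y^1)\prod_{i=2}^k Q_{s_i-s_{i-1}}(\y^{i-1},\y^i)\,\phi_{t-s_k}(\y^k,\y)\prod_{i}\prod_{j\ge2}\rd{y_j^i}$ are all fine.

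The gap is that the decisive estimate, $\phi_t(\x,\y)^2\int_{\Delta_k(t)}\int_{\R^k}R_k^2\le C_n t^{-n^2}(C_n t^{1/2})^k/\Gamma(\tfrac k2+1)$ uniformly over $\x,\y\in W_n$, is asserted rather than proved, and your fallback that it ``is the content of'' Theorem \ref{thm:DysonKernelCty} is not correct. Theorem \ref{thm:DysonKernelCty} and Lemma \ref{lem:integralofKSquared} concern the once-integrated kernel $K_t(x,\cdot)=\int_{\R^{n-1}}Q_t(x,y)\,\rd{y_2}\cdots\rd{y_n}$ (e.g.\ $\int_\R K_t(x,y)^2\rd{y}\le C_4t^{-1/2}$) and its space--time increments; they contain no control of the chain carrying the terminal pinning factor $\phi_{t-s_k}(\y^k,\y)$, which is exactly what distinguishes the delta-initial-data problem, and in the paper the logic runs the other way: the moment bound (\ref{eq:limMkPthMoment})--(\ref{eq:MnPthMomentDelta}) for that problem is deduced \emph{from} Lemma \ref{lem:localTimeExpMoments}, while the Section \ref{sec:estimatesQt} estimates serve the bounded-initial-data Picard scheme and the continuity proof. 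Quantitatively, the crude bound $\phi_{t-s_k}(\y^k,\y)\le c_n(t-s_k)^{-n^2/2}$ squares to $(t-s_k)^{-n^2}$, which is not integrable in $s_k$ for $n\ge2$, so you must retain the Gaussian tails; but once each $\phi_\tau$ is replaced by its Harish--Chandra Gaussian majorant the determinantal cancellations are gone, and after squaring you face $2k(n-1)$ nested Gaussian integrations whose centres drift with the earlier variables, weighted by the Vandermonde factors $\Delta(\y^i)^2$ from both copies of the chain. Showing that this bookkeeping yields time-increment powers strictly greater than $-1$, an overall $t^{-n^2}$, a constant uniform in $\x,\y$, and the $1/\Gamma(\tfrac k2+1)$ decay needed for summability is the whole substance of the lemma, and it is nowhere carried out; your $n=1$ computation goes through precisely because there are no spectator variables and no Vandermonde weights. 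As it stands the proposal is a programme whose essential uniform estimate is missing and is attributed to results that do not supply it.
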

The above two lemmata shows that for each $t>0$, $\sup_{x,y} \V Z_n(t,x,y) \V_2 < \infty$ and thus the chaos series (\ref{eq:ZnChaos}) is convergent in $L^2(\Omega)$. 
The same is also true for (\ref{eq:KnChaos}).

\begin{proof}[ Proof of Lemma \ref{lem:localTimeExpMoments}]

The first inequality is essentially Proposition 5.2 in \cite{OW11}, however the proof given there contains an error which we correct here.

By writing 
\[
\sum_{i,j=1}^n L_t(X^i-Y^j) = \sum_{i,j=1}^n L_{t/2}(X^i-Y^j)+ \sum_{i,j=1}^n \bigl(L_t(X^i-Y^j)-  L_{t/2}(X^i-Y^j)\bigr),
\]
and applying the Cauchy--Schwarz inequality, together with the time reversibility of the non-intersecting Brownian bridges, we see that it is enough to show that, for all $\x,\y\in W_n$,
\begin{equation} \label{eq:intlocaltimebound}
 \frac{p_n^*(t,\x,\y)}{\Delta(\x)\Delta(\y)}\E_{\x,\y;t}^{X,Y}\Big[ \exp\Big( 2a \sum_{i,j=1}^n L_{t/2}(X^i-Y^j) \Big)\Big]  \leq C t^{-n^2/2}.
\end{equation}
The law of the pair of systems of non-intersecting bridges over the time interval $[0,t/2]$ is absolutely continuous with respect to the law of a pair of independent Dyson  Brownian motions, with Radon--Nikodym derivative,
\[
 \frac{Q_{t/2}(X_{t/2},\y)}{Q_t(\x,\y)}  \frac{Q_{t/2}(Y_{t/2},\y)}{Q_t(\x,\y)}. 
\]
The error in \cite{OW11} is the omission of the second factor from this density.
An application of Cauchy--Schwarz, together with the fact that $X$ and $Y$ are independent and identically distributed gives,
\begin{multline*}
\E_{\x,\y;t}^{X,Y}\Big[ \exp\Big( 2a \sum_{i,j=1}^n L_{t/2}(X^i-Y^j) \Big)\Big]  \leq  \\
\hat{\E}_{\x}^{X}\left[  \frac{Q_{t/2}(X_{t/2},\y)^2}{Q_t(\x,\y)^2}\right]
\times\left( \hat{\E}_{\x}^{X,Y}\Big[ \exp\Big( 4a \sum_{i,j=1}^n L_{t/2}(X^i-Y^j) \Big)\Big]  \right)^{1/2},
\end{multline*}
where $\hat{\E}^X_{\x}$ denotes the expectation with respect to which the law of $X$ is a Dyson Brownian motion starting from $\x$, and $\hat{\E}^{X,Y}_{\x}$ denotes the expectation with respect to which the law of $X$ and $Y$ is that of a pair of independent Dyson Brownian motions, each starting from $\x$. On the righthand side of the above inequality the second factor is controlled by the argument of Proposition 4.2 of \cite{OW11}, and it is bounded by a constant independently of $\x \in W_n$ and $t \in [0,T]$. To control the first factor we observe that the Harish-Chandra formula  \eqref{eq:HCIZ} implies that
\begin{equation} \label{eq:HCbound}
 \frac{Q_{t}(\x,\y)}{\Delta(\y)^2}= \frac{p_n^*(t,\x,\y)}{\Delta(\x)\Delta(\y)} \leq K_{n} t^{-n^2/2}
 \end{equation}
 for a constant $K_n$ depending on $n$ only. Consequently, bounding a single factor of $Q_{t/2}(X_{t/2},\y)$ using this inequality, gives, 
 \begin{align*}
 \hat{\E}_{\x}^{X}\left[  \frac{Q_{t/2}(X_{t/2},\y)^2}{Q_t(\x,\y)^2}\right]   &\leq    K_{n} (t/2)^{-n^2/2}\frac{\Delta(\x)\Delta(\y)}{p_n^*(t,\x,\y)}  \hat{\E}_{\x}^{X}\left[  \frac{Q_{t/2}(X_{t/2},\y)}{Q_t(\x,\y)}\right]   \\&=   K_{n} (t/2)^{-n^2/2}\frac{\Delta(\x)\Delta(\y)}{p_n^*(t,\x,\y)},
 \end{align*}
 when we note  that  $\frac{Q_{t/2}(X_{t/2},\y)}{Q_t(\x,\y)}$ is the Radon--Nikodym density of a probability measure and therefore has expectation one. This proves \eqref{eq:intlocaltimebound}, and hence the first inequality in the statement of the lemma, with
 \[
 C= K_{n} 2^{n^2/2} \sup_{\x\in W_n} \left( \hat{\E}_{\x}^{X,Y}\Big[ \exp\Big( 4a \sum_{i,j=1}^n L_{t/2}(X^i-Y^j) \Big)\Big]  \right)^{1/2}
 \]
 Finally the second inequality in the statement follows from the first on a further application of \eqref{eq:HCbound}. 
\end{proof}

\section{Estimates on \texorpdfstring{$Q_t$}{Qt}}\label{sec:estimatesQt}


Before proving Theorem \ref{thm:MnMain} we need estimates on various quantities involving the kernel $Q_t(x,y) = \frac{\Delta(\y)}{\Delta(\x)}(2\pi t)^{-n/2}\det[e^{-(x_i-y_j)^2/2t}]_{i,j=1}^n$. 
The following known as the Harish-Chandra/Itzykson--Zuber formula \cite{IZ80} provides a useful alternate expression for $Q_t$:
\begin{equation}\label{eq:HCIZ}
  \frac{\det[e^{-(x_i-y_j)^2/2t}]_{i,j=1}^n}{\Delta(\x)\Delta(\y)} = c_n t^{-n(n-1)/2} \int_{\mathcal{U}(n)} \exp\Big(-\frac{1}{2t}\text{Tr} (D_\y - UD_\x U^\dagger)^2 \Big) \;\mathrm{d}U,
 \end{equation}
where $D_\x$ and $D_\y$ are diagonal matrices with entries $x_1,\ldots,x_n$ and $y_1,\ldots,y_n$ respectively.
$c_n = \big(\prod_{i=1}^{n-1} i!\big)^{-1}$ and the integral is with respect to the normalised Haar measure on the unitary group $\mathcal{U}(n)$.
Furthermore, the integrand above is bounded uniformly in $U$ as the following bound from \cite[Lemma 1]{MRTZ06} shows
\begin{equation}
  \sup_{U\in\mathcal{U}(n)} \exp\Big(-\frac{1}{2t}\text{Tr}(Y-UXU^\dagger)^2\Big) \leq \prod_{i=1}^n e^{-(y_i-x_i)^2/2t}.
  \label{eq:supHCIntegrand}
\end{equation}
From this and the Harish-Chandra formula we also have the following
\begin{lemma}
  For all $t>0$ and $x$, $y\in\R^n$ we have with a constant $C$ depending on $n$,
  \[
    Q_t(\x,\y) \leq C t^{-n/2} \prod_{i=1}^n e^{x_i^2/2t} e^{-y_i^2/8t}.
  \]
  \label{lem:QtGaussianBound}
\end{lemma}

\begin{proof}
  First note that
  \begin{align*}
    e^{-(y_i-x_i)^2/2t} = e^{-(y_i^2 - 2x_i^2)/4t} e^{-(y_i - 2x_i)^2/4t} \leq e^{-(y_i^2 - 2x_i^2)/4t},
  \end{align*}
  and so by the Harish-Chandra formula and \eqref{eq:supHCIntegrand}, recalling the definition of $Q_t$ \eqref{eq:QtDefn}, we have
  \[
    Q_t(\x,\y) \leq \frac{c_n}{(2\pi)^{n/2} t^{n^2/2}} \Delta(\y)^2 \prod_{i=1}^n e^{-y_i^2/4t} e^{x_i^2/2t}.
  \]
  The Vandermonde determinant $\Delta(\y)$ is a homogeneous polynomial in $y_1,\ldots,y_n$ of degree $n(n-1)/2$ and since $\sup_y y^\alpha e^{-y^2/\beta} \leq C_\alpha \beta^{\alpha/2}$, $\alpha, \beta>0$ where $C_\alpha$ is a constant depending on $\alpha$, we have for a constant $C$ depending on $n$ 
  \[
    \Delta(\y)^2 \prod_{i=1}^n e^{-y_i^2/8t} \leq C t^{n(n-1)/2},
  \]
  and from which the statement of the lemma follows.
\end{proof}

As mentioned in the introduction, $Q_t(\x,\y)$ is well defined on the boundary of the Weyl chamber and since it is a product and ratio of determinants, it is permutation symmetric and so we can extend $Q_t$ to a function on $\R^n\times\R^n$ by symmetry.
Denote $K_t(\x,y_1) := \int_{\mathbb{R}^{n-1}} Q_t(\x,\y) \;\rd{\y_*}$, recalling that $\rd{\y}_*$ means integration with respect to $y_2,\ldots,y_n$ for $\y = (y_1,\ldots,y_n)\in\R^n$.
The following result strongly indicates the continuity of $M_n$; in fact it is a key estimate in its proof in Section \ref{sec:cty}.

\begin{theorem}
  \phantomsection\label{thm:DysonKernelCty}
  \begin{itemize}
    \item[(a)] There is a constant $C_1>0$ depending only on $n$ such that for all $t>0$ and $\x$, $\mb{z}\in\mathbb{R}^n$ we have
    \[
      \int_0^t \int_\mathbb{R} \left( \int_{\R^{n-1}} | Q_{s}(\x,\y) - Q_{s}(\mb{z},\y) | \;\rd{\y_*} \right)^2 \;\rd{y_1}\rd{s} \leq C_1|\x-\mb{z}|,
    \]
    \item[(b)] there are constants $C_2$, $C_3>0$ depending only on $n$ such that for all $t$, $u$ with $0<u\leq t<\infty$ and $\x\in\mathbb{R}^n$, we have
    \[
      \int_0^u \int_\mathbb{R} \left( \int_{\R^{n-1}} | Q_{t-u+s}(\x,\y) - Q_{s}(\x,\y) | \;\rd{\y_*} \right)^2 \;\rd{y_1}\rd{s} \leq C_2|t-u|^{1/2},
    \]
    and
    \[
      \int_u^t \int_\mathbb{R} K_{s}(\x,y)^2 \;\rd{y}\rd{s} \leq C_3|t-u|^{1/2}.
    \]
  \end{itemize}
\end{theorem}

The theorem is a consequence of the series of results below.
First observe that $Q_t$ has the following scaling property:
\begin{align}
  Q_t(\x,\y) 
  = t^{-n/2} \frac{\Delta(\y/\sqrt{t})}{\Delta(\x/\sqrt{t})} \det\Big[\frac{1}{\sqrt{2\pi}} e^{-(x_i/\sqrt{t} - y_j/\sqrt{t})^2/2} \Big] 
  = t^{-n/2} Q_1(\x/\sqrt{t},\y/\sqrt{t}).
  \label{eq:QtScaling}
\end{align}
The left hand side of the inequality in Theorem \ref{thm:DysonKernelCty}(a) is bounded above by
\begin{align}
  \int_0^\infty & \int_\mathbb{R} \bigg( \int_{\mathbb{R}^{n-1}} | Q_s(\x,\y) - Q_s(\z,\y) | \;\rd{\y_*} \bigg)^2 \;\mathrm{d}y_1 \mathrm{d}s \notag \\
  &= \int_0^\infty \frac{1}{\sqrt{s}} \int_\mathbb{R} \bigg( \int_{\mathbb{R}^{n-1}} | Q_1(\x/\sqrt{s}, \y^\prime) - Q_1(\z/\sqrt{s},\y^\prime) | \;\rd{\y_*^\prime} \bigg)^2 \rd{y_1^\prime} \rd{s},
  \label{eq:QtScaling2}
\end{align}
where we have changed the integration region to $[0,\infty)$ in the time integral which results in an upper bound due to the positivity of the integrand.
The equality follows from the scaling property (\ref{eq:QtScaling}) and a change of variables. 
Theorem \ref{thm:DysonKernelCty}(a) follows from (\ref{eq:QtScaling2}) and Lemma \ref{lem:min} below.
\begin{lemma}
Suppose a function $R(\x,\z;y):\mathbb{R}^n\times\R^n\times\mathbb{R}\to\mathbb{R}$ satisfies for some constants $c_1$, $c_2>0$
\begin{equation}
  \int_\mathbb{R} R(\x,\z;y)^2 \;\mathrm{d}y \leq \min(c_1,c_2|\x-\z|^2),
  \label{eq:min}
\end{equation}
for any $\x$, $\z\in\mathbb{R}^n$, then
\[
 \int_0^\infty \frac{1}{\sqrt{t}} \int_\mathbb{R} R(\x/\sqrt{t},\z/\sqrt{t};y)^2 \;\mathrm{d}y\mathrm{d}t \leq C|\x-\z|,
\]
with $C = 4\sqrt{c_1c_2}$.
  \label{lem:min}
\end{lemma}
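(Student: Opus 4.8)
The plan is to collapse the double integral into an elementary one-dimensional integral using the scaling already present in the statement, and then evaluate that integral by splitting the time domain at the crossover point of the two competing bounds in hypothesis (\ref{eq:min}).

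First I would dispose of the trivial case $x=z$ and otherwise set $b := |x-z| > 0$. The point is that $|x/\sqrt{t} - z/\sqrt{t}| = b/\sqrt{t}$, so applying (\ref{eq:min}) with the arguments $x/\sqrt{t}$ and $z/\sqrt{t}$ yields, for every $t>0$,
\[
  \int_\mathbb{R} \big( R(x/\sqrt{t},y) - R(z/\sqrt{t},y) \big)^2 \;\mathrm{d}y \leq \min\!\big( c_1,\, c_2 b^2 / t \big).
\]
Consequently the left-hand side of the claimed inequality is bounded above by $\int_0^\infty t^{-1/2} \min( c_1, c_2 b^2/t ) \,\mathrm{d}t$, and it remains to control this deterministic integral.

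Next I would split the range of integration at $t_0 := c_2 b^2 / c_1$, the value of $t$ for which $c_1 = c_2 b^2/t$: on $(0,t_0)$ the minimum equals $c_1$, and on $(t_0,\infty)$ it equals $c_2 b^2/t$. This gives
\[
  \int_0^\infty \frac{1}{\sqrt{t}} \min\!\big( c_1,\, c_2 b^2/t \big) \;\mathrm{d}t = \int_0^{t_0} \frac{c_1}{\sqrt{t}} \;\mathrm{d}t + \int_{t_0}^\infty \frac{c_2 b^2}{t^{3/2}} \;\mathrm{d}t = 2 c_1 \sqrt{t_0} + \frac{2 c_2 b^2}{\sqrt{t_0}},
\]
and substituting $\sqrt{t_0} = b\sqrt{c_2/c_1}$ shows that both terms equal $2b\sqrt{c_1 c_2}$, so the sum is $4\sqrt{c_1 c_2}\,|x-z|$, which is the assertion with $C = 4\sqrt{c_1 c_2}$.

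There is no real obstacle here: the only minor points needing attention are that the integrand is nonnegative (so the enlargement of the time domain to $[0,\infty)$ performed in (\ref{eq:QtScaling2}) is legitimate) and that the integral converges at both endpoints — near $t=0$ because of the uniform bound $c_1$ in (\ref{eq:min}), and near $t=\infty$ because of the $t^{-1}$ decay coming from the $c_2|x-z|^2$ term. Both the change of variables and the split-domain computation are entirely elementary.
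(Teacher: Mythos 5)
Your proof is correct and follows essentially the same route as the paper: apply the hypothesis at the scaled points $x/\sqrt{t}$, $z/\sqrt{t}$, split the time integral at $t_0 = c_2|x-z|^2/c_1$, and evaluate the two elementary pieces, each contributing $2\sqrt{c_1 c_2}\,|x-z|$. Nothing further is needed.
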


\begin{proof}
\begin{align*}
 \int_0^\infty \frac{1}{\sqrt{t}} \int_\mathbb{R} & R(\x/\sqrt{t},\z/\sqrt{t};y)^2 \;\mathrm{d}y \mathrm{d}t \\
 	&\leq \int_0^{\frac{c_2}{c_1}|\x-\z|^2} \frac{c_1}{\sqrt{t}} \;\mathrm{d}t + \int_{\frac{c_2}{c_1}|\x-\z|^2}^\infty \frac{c_2}{t^{3/2}}|\x-\z|^2 \;\mathrm{d}t = C|\x-\z|.
\end{align*}
\end{proof}

Thus, we need to show that
\[
  R(\x,\z;y_1) := \int_{\R^{n-1}} | Q_1(\x,\y) - Q_1(\z,\y) | \;\rd{\y_*}
\]
satisfies the hypothesis of Lemma \ref{lem:min}.
Using the inequality $(a+b)^2 \leq 2(a^2 + b^2)$ and the fact that $Q_t$ is non-negative, we have
\begin{align*}
    \int_\R R(\x,\z;y_1)^2 \;\rd{y_1} 
    &\leq 2\bigg(\int_\mathbb{R} K_1(\x,y_1)^2 \;\mathrm{d}y_1 + \int_\mathbb{R} K_1(\z,y_1)^2 \;\mathrm{d}y_1 \bigg) \\
    &\leq 4\sup_{\x\in\mathbb{R}^n} \Vert K_1(\x,\cdot) \Vert_{L^2(\mathrm{d}y_1)}^2.
\end{align*}
On the other hand, let $\mb{r}(\rho):[0,1]\to\mathbb{R}^n$, $\mb{r}(\rho) = (1-\rho)\x + \rho \z$ be a parameterisation of the straight line from $\x$ to $\z$ and denote by $\nabla Q_1$ the gradient of $Q_1$ with respect to the first variable then 
\begin{align*}
  \int_{\R^{n-1}} | Q_1(\x,\y) - Q_1(\z,\y) | \;\rd{\y_*}
  &\leq \int_{\R^{n-1}} \int_0^1 | \nabla Q_1(\mb{r}(\rho),\y) \cdot \mb{r}^\prime(\rho) | \;\mathrm{d}\rho \rd{\y_*} \\
  &\leq \int_{\R^{n-1}} \int_0^1 | \nabla Q_1(\mb{r}(\rho),\y) | |\x-\z| \;\rd{\rho}\rd{\y_*}
\end{align*}
By Lemma \ref{prop:L1Derivative} below we have $\left|\frac{\partial Q_1}{\partial x_j}(\x,\y) \right| \leq C Q_2(\x,\y)$ for all $j$ and so 
\[
  |\nabla Q_1(\x,\y)|^2 = \sum_{j=1}^n \frac{\partial Q_1}{\partial x_j} (\x,\y)^2 \leq nC^2 Q_2(\x,\y)^2.
\]
Thus, by Minkowski's integral inequality
\begin{align*}
  \bigg(\int_\R R(\x,\z;y_1)^2 \;\rd{y_1} \bigg)^{1/2}
  &\leq C^\p |\x-\z| \int_0^1 \bigg( \int_\R K_2(\mb{r}(\rho),y_1)^2 \;\rd{y_1} \bigg)^{1/2} \;\rd{\rho} \\
  &\leq C^\p |\x-\z| \sup_{\x\in\R^n} \Vert K_2(\x,\cdot) \Vert_{L^2(\mathrm{d}y_1)},
\end{align*}
for a constant $C^\p$ depending only on $n$.
Therefore, in order to verify the hypothesis of Lemma \ref{lem:min} it suffices by scaling to show that
\begin{equation}\label{eq:L2}
 \sup_{\x\in\mathbb{R}^n} \int_\mathbb{R} K_1(\x,y)^2 \;\mathrm{d}y < \infty.
\end{equation}

\begin{lemma}\label{prop:L1Derivative}
  There is a constant $C := C(n)$ such that for all $j=1,\ldots,n$,   
  \[
    \bigg\vert \frac{\partial Q_t}{\partial x_j}(\x,\y) \bigg\vert \leq CQ_{2t}(\x,\y).
  \]
\end{lemma} 

\begin{proof}
 By the Harish--Chandra formula \eqref{eq:HCIZ}, $Q_t(\x,\y)$ can be written as
 \begin{align*}
  Q_t(\x,\y) 
  = c_n^\prime t^{-n^2/2} \int_{\mathcal{U}(n)} \Delta(\y)^2 \exp\Big(-\frac{1}{2t}\text{Tr}(D_\y - UD_\x U^\dagger)^2\Big) \;\mathrm{d}U,  
 \end{align*}
 where $c_n^\prime = (2\pi)^{-n/2} c_n$.
 Observe that by the cyclic property of the trace and the fact that $U$ is unitary, $\text{Tr}(D_\y-UD_\x U^\dagger)^2 = \text{Tr}(U^\dagger D_\y U -D_\x)^2$. 
  Therefore,
 \begin{equation}\label{eq:dQ1}
     \frac{\partial Q_t}{\partial x_j}(\x,\y) = c_n^\prime t^{-n^2/2} \int_{\mathcal{U}(n)} \frac{1}{t} \Delta(\y)^2 \big((U^\dagger D_\y U)_{jj} - x_j\big) \exp\Big(-\frac{1}{2t}\text{Tr}(D_\x - U^\dagger D_\y U)^2\Big) \;\mathrm{d}U,
 \end{equation}
  as long as one can justify the differentiation under the integral sign.
  Rewrite the integrand above as
  \begin{equation}
  \frac{1}{t}\Delta(\y)^2 \big((U^\dagger D_\y U)_{jj} - x_j\big) \Big\{\exp\Big(-\frac{1}{4t}\text{Tr}(D_\x - U^\dagger D_\y U)^2\Big) \Big\}^2.
  \label{eq:dQ1Integrand}
  \end{equation}
  For a Hermitian matrix $H$, one can check that $\mathrm{Tr}H^2 = \sum_{i=1}^n h_{ii}^2 + 2\sum_{i<j} |h_{ij}|^2$ and therefore $\mathrm{Tr}(D_\x - U^\dagger D_\y U)^2 = \sum_{i=1}^n \big(x_i - (U^\dagger D_\y U)_{ii}\big)^2 \allowbreak + 2\sum_{i<j} \big| (U^\dagger D_\y U)_{ij}\big|^2$.    
  The quantity 
  \[
  \frac{1}{4t}\big|(U^\dagger D_\y U)_{jj} - x_j\big| \exp\Big(-\frac{1}{4t}\sum_{i=1}^n \big(x_i - (U^\dagger D_\y U)_{ii}\big)^2 \Big) \exp\big(-\frac{1}{2t}\sum_{i<j} \big| (U^\dagger D_\y U)_{ij}\big|^2 \big)
  \]
  is bounded above by a constant and thus \eqref{eq:dQ1Integrand} is bounded above by a constant times $\Delta(\y)^2 \exp\Big(-\frac{1}{4t}\text{Tr}(D_\x - U^\dagger D_\y U)^2\Big)$.
  This combined with \eqref{eq:dQ1} and applying the Harish-Chandra formula again we have
  \begin{align*}
    \left\vert \frac{\partial Q_t}{\partial x_j}(\x,\y) \right\vert
    &\leq Cc_n^\p t^{-n^2/2} \int_{\mathcal{U}(n)} \Delta(\y)^2 \exp\Big(-\frac{1}{4t}\text{Tr}(D_\x - U^\dagger D_\y U)^2\Big) \;\mathrm{d}U \\
    &= C^\p Q_{2t}(\x,\y),
  \end{align*}
  for some constant $C$, $C^\p$ depending only on $n$.
  
  It remains to justify the swapping of the derivative and the integral in \eqref{eq:dQ1}.
  To do this we shall use the following result from \cite[Theorem 16.8]{Bi95}.
 \begin{proposition}\label{prop:DiffUnderIntegral}
  Let $(Y,\mu)$ be a measure space. 
  Suppose that $f(x,y)$ is a continuous and integrable function of $y$ for each $x\in I$, where $I$ can be taken to be $\mathbb{R}$ and that for each $y\in Y$, $\frac{\partial f}{\partial x}(x,y)$ exists. 
  If for each $x^*$ there exists a function $g(x^*,y)$ integrable in $y$ such that $\big|\frac{\partial f}{\partial x}(x,y)\big| \leq g(x^*,y)$ for all $y$ and all $x$ in some neighbourhood of $x^*$, then
  \[
    \frac{\partial}{\partial x}\int_Y f(x,y) \;\mu(\rd{y}) = \int_Y \frac{\partial f}{\partial x}(x,y) \;\mu(\rd{y}).
  \] 
 \end{proposition}
 Since $e^{-\text{Tr}(D_\y-UD_\x U^\dagger)^2/2} \leq 1$ and $\rd{U}$ integrates to one we can apply the above proposition with $g\equiv 1$ which completes the proof. 
\end{proof}

To show \eqref{eq:L2}, we shall use the following result from \cite[Proposition 2.3]{Jo01} which shows that for $1\leq N\leq n$, the $N$-point correlation function of $Q_t$ is given by a determinant:
\[
  \frac{n!}{(n-N)!} \int_{\R^{n-N}} Q_t(\x,\y) \;\rd{y_{N+1}}\ldots\rd{y_n} = \det[\tilde{K}_t(\x,y_i,y_j)]_{1\leq i,j\leq N},
\]
where (see the equation below \cite[equation (2.18)]{Jo01})
\begin{align}
  \tilde{K}_t(\x,u,v) &= -\frac{1}{(2\pi i)^2 t^2} \int_\gamma \rd{z} \int_{\Gamma_L} \rd{w} \; e^{\frac{1}{2t}(w-v)^2-\frac{1}{2t}(z-u)^2} \frac{1}{w-z} \prod_{j=1}^n \frac{w-x_j}{z-x_j} \notag \\
  &\qquad\times \bigg[ (w+z)(w-z) + uz -vw - t\sum_{j=1}^n \frac{x_j(w-z)}{(w-x_j)(z-x_j)} \bigg],
  \label{eq:Npoint2}
\end{align}
where $\gamma$ is a closed contour around the $x_i$'s and $\Gamma_L:t\to L+it$, $t\in\mathbb{R}$ with $L\in\mathbb{R}$ large enough so that $\gamma$ and $\Gamma_L$ do not intersect. 
Then by taking $N=1$, $K_1(\x,y)$ = $\frac{(n-1)!}{n!} \tilde{K}_1(\x,y,y)$.
Observe that the integral formula (\ref{eq:Npoint2}) make clear the symmetry of $\tilde{K}_t$ with respect to the ordering of $x_1,\ldots,x_n$ and that there are no issues if any of the $x_i$'s coincide.

Observe that
\begin{align}\label{eq:L2Split}
 \int_\mathbb{R} K_1(\x,y)^2 \;\mathrm{d}y 
 	&\leq \sup_{y\in\mathbb{R}} K_1(\x,y) \int_\mathbb{R} K_1(\x,y) \;\mathrm{d}y = n! \sup_{y\in\mathbb{R}} K_1(\x,y),
\end{align}
since $\int_{W_n} Q_1(\x,\y) \;\rd{\y} = 1$ for all $x$.
So it suffices to show that $\sup_{\x,y} K_1(\x,y)$ is bounded or equivalently by the translation invariance of $K_1$ which follows from \cite[equation (2.18)]{Jo01}) that $\sup_{\x\in\mathbb{R}^n} K_1(\x,0)$ is bounded.

\begin{lemma}\label{lem:SupK}
 \[
   \sup_{\x\in\mathbb{R}^n} K_1(\x,y) = \sup_{\x\in\mathbb{R}^n} K_1(\x,0) < \infty.
 \]
\end{lemma}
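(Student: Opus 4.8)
The plan is to first reduce to the stated equality and then to a uniform bound on the diagonal contour integral for $K$. The identity $\sup_{x\in\mathbb{R}^n}K(x,y)=\sup_{x\in\mathbb{R}^n}K(x,0)$ is immediate from translation invariance: since $p_t(x_i-y_j)$ depends only on $x_i-y_j$ and the Vandermonde determinant is translation invariant, $Q_t(x+h\mathbf 1,y+h\mathbf 1)=Q_t(x,y)$ for every $h\in\mathbb{R}$; substituting this into $K(x,y_1)=\int_{\mathbb{R}^{n-1}}Q_1(x,y)\,\mathrm{d}y_2\cdots\mathrm{d}y_n$ and changing variables $y_i\mapsto y_i+y_1$ gives $K(x,y_1)=K(x-y_1\mathbf 1,0)$. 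Moreover $K(x,0)\ge 0$, being an integral of $Q_1\ge 0$, so it remains to bound $K(x,0)$ from above uniformly in $x\in\mathbb{R}^n$.

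For the uniform bound I would use $K(x,0)=\tfrac1n\tilde K_1(x,0,0)$ together with the alternative contour formula \eqref{eq:Npoint2}. Specialising it to $u=v=0$, the term $uz-vw$ vanishes, the factor $\tfrac1{w-z}$ cancels the common $(w-z)$ inside the bracket, and one obtains
\[
  \tilde K_1(x,0,0)=-\frac1{(2\pi i)^2}\int_\gamma\mathrm{d}z\int_{\Gamma_L}\mathrm{d}w\;e^{(w^2-z^2)/2}\prod_{j=1}^n\frac{w-x_j}{z-x_j}\Bigl[(w+z)+\sum_{j=1}^n\frac{x_j}{(w-x_j)(z-x_j)}\Bigr].
\]
The point of passing to \eqref{eq:Npoint2} with $u=v=0$ is that the resulting integrand has no pole on $\{w=z\}$, while the apparent poles at $w=x_j$ are removed by the factor $\prod_i(w-x_i)$; hence for fixed $z\notin\{x_1,\dots,x_n\}$ the $w$-integrand is entire and equals $e^{w^2/2}$ times a polynomial in $w$. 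I can therefore repeat the contour manipulations in the proof of Proposition~\ref{prop:UniformBoundDerivative}: first shift $\Gamma_L$ to the imaginary axis $\Gamma_0$ (legitimate since the $w$-integrand is entire and $|e^{w^2/2}|=e^{((\Re w)^2-(\Im w)^2)/2}\to 0$ on the horizontal connecting segments as $\Im w\to\pm\infty$), then open the closed contour $\gamma$ out to the two horizontal lines $\gamma_\pm=\{\,\Im z=\pm d\,\}$ for a fixed $d>0$, the vertical connecting segments vanishing because $|e^{-z^2/2}|\to 0$ there and no pole being crossed since the only singularities, at the real points $x_j$, lie between $\gamma_+$ and $\gamma_-$.

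What remains is an $x$-uniform estimate of the double integral over $\gamma_\pm\times\Gamma_0$, and this is routine. On $\gamma_\pm$ one has $|e^{-z^2/2}|\le e^{d^2/2}e^{-(\Re z)^2/2}$ and $|z-x_j|\ge d$, so $\bigl|\prod_j\tfrac{w-x_j}{z-x_j}\bigr|\le(1+(|w|+|z|)/d)^n$ exactly as in \eqref{eq:Ratio}; on $\Gamma_0$, writing $w=i\tau$, one has $|e^{w^2/2}|=e^{-\tau^2/2}$ and $|x_j|/|w-x_j|=|x_j|/\sqrt{\tau^2+x_j^2}\le 1$, so the bracket is bounded by $|w|+|z|+n/d$. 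Thus the integrand is dominated by a polynomial in $|z|,|w|$ times $e^{-(\Re z)^2/2}e^{-(\Im w)^2/2}$ with all constants depending only on $n$ and $d$, and the two ensuing Gaussian-type integrals give $|\tilde K_1(x,0,0)|\le C_n$, hence $\sup_{x}K(x,0)\le C_n/n<\infty$. These estimates are unaffected by coincidences or vanishing among the $x_j$, as already noted after \eqref{eq:Npoint2}.

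The only genuine difficulty is the contour bookkeeping in the middle step. The key observation making it work is that the $u=v=0$ specialisation of \eqref{eq:Npoint2} removes the $w=z$ pole present in \eqref{eq:Npoint}; without it one would be forced to keep $\Gamma_L$ far to the right of $\gamma$, where $|e^{w^2/2}|$ grows like $e^{(\Re w)^2/2}$ and no bound uniform in $x$ is available. Once the integrand is pole-free away from the $x_j$, the remaining analysis is a verbatim repetition of the Gaussian contour estimates used for Proposition~\ref{prop:UniformBoundDerivative}.
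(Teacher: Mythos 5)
Your proposal is correct and follows essentially the same route as the paper: the translation-invariance reduction to $K(x,0)$, the use of the alternative formula \eqref{eq:Npoint2} at $u=v=0$ to remove the $w=z$ pole, shifting $\Gamma_L$ to $\Gamma_0$, opening $\gamma$ to the horizontal lines $\gamma_\pm$ as in Proposition \ref{prop:UniformBoundDerivative}, and concluding with $x$-uniform Gaussian estimates using \eqref{eq:Ratio} and $|x_j|/|w-x_j|\le 1$ on $\Gamma_0$. Your handling of the bracket term is, if anything, slightly more carefully stated than the paper's.
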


\begin{proof}
  By formula \eqref{eq:Npoint2} we have
  \begin{align}
    nK_1(\x,0) 
    &= -\frac{1}{(2\pi i)^2} \int_\gamma \rd{z} \int_{\Gamma_L} \rd{w} \; e^{-z^2/2}e^{w^2/2} (w+z) \prod_{j=1}^n \frac{w-x_j}{z-x_j} \notag \\
    &\qquad +\frac{1}{(2\pi i)^2} \int_{\gamma} \rd{z} \int_{\Gamma_L} \rd{w} \; e^{-z^2/2} e^{w^2/2} \prod_{j=1}^n \frac{w-x_j}{z-x_j}\sum_{j=1}^n \frac{x_j}{(w-x_j)(z-x_j)} \notag \\
    &=: I_1 + I_2.
    \label{eq:SupK1}
  \end{align}
  Since the integrand is holomorphic we can by Cauchy's theorem deform the contour $\Gamma_L$ so that $L=0$.
  We can also take $\gamma$ to be the closed (rectangular) contour (see Figure \ref{fig:contour}) around $x_1,\ldots,x_n$ composed of four parts $\gamma_t$, $\gamma_b$, $\gamma_r$ and $\gamma_l$, where $\gamma_t : u \to -u + di$, $u\in [-R,R]$, $\gamma_b : u \to u - di$, $u\in [-R,R]$, $\gamma_r : v \to R + vi$, $v\in [-d,d]$, and $\gamma_l : v \to -R - vi$, $v\in [-d,d]$. 
$R := R(x)$ is chosen so that the minimum distance between the contour $\gamma$ and the $x_i$'s is at least $d$.
  We shall consider each parts of the contour separately.
  Denote the contribution from the contour $\gamma_r$ by $I_j(\gamma_r)$, $j=1,2$ and likewise for the others.
  
  \begin{figure}
    \centering
    \begin{tikzpicture}
      \draw[thick, ->] (-6,0) -- (6,0) coordinate (xaxis);
      \draw[thick, ->] (0,-5) -- (0,5) coordinate (yaxis);
    
      \path[draw,blue, line width=0.8pt, postaction=addarrows] (-4,2)
        -- node[midway, xshift=2cm, above, black] {$\gamma_t$} node[midway, xshift=-0.25cm, above, black] {$di$} (4,2)
        -- node[midway, yshift=1cm, right, black] {$\gamma_r$} node[midway, yshift=-0.4cm, right, black] {$R$} (4,-2)
        -- node[midway, xshift=2cm, below, black] {$\gamma_b$} node[midway, xshift=-0.4cm, below, black] {$-di$} (-4,-2)
        -- node[midway, yshift=1cm, left, black] {$\gamma_l$} node[midway, yshift=-0.4cm, left, black] {$-R$} (-4,2);
        
      \path[draw,red, line width=0.8pt, postaction=addarrowsvert] (0,-5) -- (0,5) node[right, yshift=-1cm, black] {$\Gamma_0$};
    
      \draw plot[mark=x, mark options={color=black, scale=1.5}] coordinates { (-3,0) } node[below] {$x_n$};
      \draw plot[mark=x, mark options={color=black, scale=1.5}] coordinates { (-2.0,0) } node[below] {$x_{n-1}$};
      \draw plot[mark=x, mark options={color=black, scale=1.5}] coordinates { (0.5,0) } node[below] {$x_3$};
      \draw plot[mark=x, mark options={color=black, scale=1.5}] coordinates { (1.5,0) } node[below] {$x_2$};
      \draw plot[mark=x, mark options={color=black, scale=1.5}] coordinates { (2.5,0) } node[below] {$x_1$};;
    \end{tikzpicture}
    \caption{Sketch of the contours used in the proof of Lemma \ref{lem:SupK}.} \label{fig:contour}
  \end{figure}

  Since $\vert z-x_j\vert \geq d$ for all $z\in\gamma$ and $j$, we have 
  \[
    \bigg\vert \prod_{j=1}^n \frac{w-x_j}{z-x_j}\bigg\vert 
      = \prod_{j=1}^n \bigg\vert 1 + \frac{w-z}{z-x_j} \bigg\vert
      \leq \bigg(1 + \frac{|w| + |z|}{d}\bigg)^n.
  \]
  On $\gamma_r$, $|z| = |R+vi| = (R^2+v^2)^{1/2} \leq (R^2+d^2)^{1/2}$ and
  \[
    |e^{-z^2/2}| = |e^{-(R^2+2iRv-v^2)/2}| \leq e^{-R^2/2}e^{d^2/2}.
  \]
  Therefore,
  \begin{align*}
    |I_1 & (\gamma_r)| \\
    &\leq \int_{\gamma_r} \frac{|\rd{z}|}{2\pi} \; e^{-R^2/2}e^{d^2/2} \int_\R \frac{\rd{t}}{2\pi} \; e^{-t^2/2} \big(|t| + (R^2+d^2)^{1/2}\big) \bigg(1 + \frac{|t| + (R^2+d^2)^{1/2}}{d} \bigg)^n \\
    &\leq \frac{e^{-R^2/2}e^{d^2/2}}{(2\pi)^2} \text{length}(\gamma_r) f(R),
  \end{align*}
  where $f(R)$ depends on $d$, $n$ and is polynomial in $R$ and hence $I_1(\gamma_r)\to 0$ as $R\to\infty$.

  For $I_2(\gamma_r)$, observe that
  \[
    \bigg|\prod_{j=1}^n \frac{w-x_j}{z-x_j} \sum_{k=1}^n \frac{x_k}{(w-x_k)(z-x_k)} \bigg| \leq \prod_{j=1}^n \bigg|\frac{w-x_j}{z-x_j}\bigg| \sum_{k=1}^n \frac{1}{|z-x_k|} \leq \frac{n}{d} \bigg(1 + \frac{|w| + |z|}{d}\bigg)^n,
  \]
  and so in a similar manner as above we have that
  \[
    |I_2(\gamma_r)| \leq \frac{n}{d}\frac{e^{-R^2/2}e^{d^2/2}}{(2\pi)^2} \text{length}(\gamma_r) g(R),
  \]
  with $g$ a polynomial in $R$.
  Thus, we also have that $I_2(\gamma_r)\to 0$ as $R\to\infty$.
  By symmetry, the same argument shows that $I(\gamma_l)$ also vanishes as $R\to\infty$.
  Thus, we can deform the contour $\gamma$ to the two horizontal lines, $\gamma_+ : u \to -u+di$ and $\gamma_- : u\to u-di$, $u\in\mathbb{R}$.
  On $\gamma_+$, $|z| = (u^2 + d^2)^{1/2}$ and $|e^{-z^2/2}| \leq e^{-u^2/2} e^{d^2/2}$.
  Hence, in the same fashion as above, we have
  \begin{align}
    |I_1(\gamma_+)| 
    &\leq \frac{e^{d^2/2}}{4\pi^2} \int_{\R} \rd{u} \int_\R \rd{t} \; e^{-(u^2 + t^2)/2} (|t| + (u^2 + d^2)^{1/2}) \bigg(1 + \frac{ |t| + (u^2 + d^2)^{1/2}}{d} \bigg)^n \notag \\
    &\leq C_{d,n},
    \label{eq:SupK2}
  \end{align}
  for some constant $C_{d,n}$.
  Similarly, we have
  \begin{align*}
    |I_2(\gamma_+)|
    &\leq \frac{n}{d}\frac{e^{d^2/2}}{4\pi^2} \int_R \rd{u}\int_R \rd{t}\; e^{-(u^2+t^2)/2} \bigg(1 + \frac{ |t| + (u^2 + d^2)^{1/2}}{d} \bigg)^n \\
    &\leq C^\p_{d,n},
  \end{align*}
  for some constant $C^\p_{d,n}$.
  By symmetry, $|I_1(\gamma_-)|$ and $|I_2(\gamma_-)|$ are also bounded by $C_{d,n}$ and $C^\p_{d,n}$ respectively.
  Take $d=1$ and thus we have shown that there exists a constant $C$ independent of $\x$ and depending only on $n$ such that
  \[
    \sup_{\x\in\R^n} K_1(\x,0) \leq  C,
  \]
  which completes the proof.
 \end{proof}

\begin{lemma}
  There exists a constant $C_4>0$ depending only on $n$ such that for all $t>0$ and $\x\in\R^n$,
  \[
    \int_\mathbb{R} K_{t}(\x,y)^2 \;\rd{y} \leq C_4 t^{-1/2}.
  \]
  \label{lem:integralofKSquared}
\end{lemma}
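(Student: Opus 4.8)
The plan is to reduce everything to the already-settled case $t=1$ by exploiting the scaling property (\ref{eq:QtScaling}) of $Q_t$, together with the uniform bound (\ref{eq:L2}) that was obtained along the way to proving Theorem \ref{thm:DysonKernelCty}(a).

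First I would record how $K_t$ scales. Starting from $Q_t(x,y)=t^{-n/2}Q_1(x/\sqrt t,\,y/\sqrt t)$, I integrate out the variables $y_2,\dots,y_n$ in the definition of $K_t$ and substitute $y_i=\sqrt t\,y_i'$ for $i=2,\dots,n$; the $n-1$ Jacobian factors $\sqrt t$ combine with the prefactor $t^{-n/2}$ to give
\[
  K_t(x,y_1)=t^{-n/2}\,t^{(n-1)/2}\,K\!\left(\tfrac{x}{\sqrt t},\tfrac{y_1}{\sqrt t}\right)=t^{-1/2}\,K\!\left(\tfrac{x}{\sqrt t},\tfrac{y_1}{\sqrt t}\right).
\]
Squaring and integrating in $y_1$, then substituting $y_1=\sqrt t\,y$, yields
\[
  \int_{\R}K_t(x,y_1)^2\,\rd{y_1}=t^{-1}\int_{\R}K\!\left(\tfrac{x}{\sqrt t},\tfrac{y_1}{\sqrt t}\right)^{2}\rd{y_1}=t^{-1/2}\int_{\R}K\!\left(\tfrac{x}{\sqrt t},y\right)^{2}\rd{y}.
\]

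To finish I would invoke the fact, already established in the proof of Theorem \ref{thm:DysonKernelCty}(a) (namely (\ref{eq:L2}), obtained from Lemma \ref{lem:SupK} via (\ref{eq:L2Split})), that $\sup_{x\in\R^n}\int_{\R}K(x,y)^2\,\rd{y}<\infty$. Setting $C_4:=\sup_{x\in\R^n}\int_{\R}K(x,y)^2\,\rd{y}$, a quantity depending only on $n$, the displayed identity gives $\int_{\R}K_t(x,y)^2\,\rd{y}\le C_4\,t^{-1/2}$ for every $t>0$ and every $x\in\R^n$, as required.

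There is no real obstacle here: the only point requiring a little care is the bookkeeping of the powers of $t$ through the two changes of variables (the $n-1$ factors from integrating out $y_2,\dots,y_n$ and the extra factor from the $y_1$-integration, against the $t^{-n/2}$ from the scaling of $Q_t$), and the substantive input — the $t$-free $L^2$ bound on $K$ — is already in hand.
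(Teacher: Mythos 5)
Your proposal is correct and follows essentially the same route as the paper: the scaling identity $\int_\R K_t(x,y)^2\,\rd{y}=t^{-1/2}\int_\R K_1(x/\sqrt{t},y)^2\,\rd{y}$ obtained from (\ref{eq:QtScaling}) and a change of variables, combined with the $x$-uniform bound on $\int_\R K(x,y)^2\,\rd{y}$ coming from Lemma \ref{lem:SupK} via (\ref{eq:L2Split}). You merely spell out the Jacobian bookkeeping that the paper leaves implicit; there is no gap.
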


\begin{proof}
 By the scaling property of $Q_t$ and a change of variables 
 \[
  \int_\mathbb{R} K_{t}(\x,y)^2 \;\rd{y} = t^{-1/2} \int_\mathbb{R} K_1(\x t^{-1/2},y^\prime)^2 \;\rd{y^\prime}.
 \]
 By Lemma \ref{lem:SupK} and (\ref{eq:L2Split}), the latter integral for each fixed $n$ is bounded uniformly in $x$ which gives the desired result.
\end{proof}

We are now ready to prove Theorem \ref{thm:DysonKernelCty}. 

\begin{proof}[Proof of Theorem \ref{thm:DysonKernelCty}(a)] Summarizing the argument given above, we take 
\[
R(\x,\z;y_1) := \int_{\R^{n-1}} | Q_1(\x,\y) - Q_1(\z,\y) | \;\rd{\y_*}
\]
and the result follows from Lemma \ref{lem:min} and \eqref{eq:QtScaling2}. The hypothesis of   Lemma \ref{lem:min}  for this choice of $R$  is verified by checking 
  equation \eqref{eq:L2} which in turn  holds by virtue of  Lemma \ref{lem:integralofKSquared}. 
\end{proof}

\begin{proof}[Proof of Theorem \ref{thm:DysonKernelCty}(b)]
  Let $t=u+h$ where $h>0$, then we need to estimate
  \begin{align*}
      \int_0^u \int_\mathbb{R} \bigg( \int_{\R^{n-1}} | Q_{s+h}(\x,\y) - Q_{s}(\x,\y) | \;\rd{\y_*} \bigg)^2 \;\rd{y_1}\rd{s}. 
  \end{align*}
  Making the change of variable $s = hs^\p$, $\y = \sqrt{h}\y^\p$ and using the scaling property (\ref{eq:QtScaling}) of $Q_t$, the above is bounded by
  \[
  h^{1/2} \int_0^\infty \int_\R \bigg( \int_{\R^{n-1}} | Q_{s^\p+1}(\x/\sqrt{h},\y^\p) - Q_{s^\p}(\x/\sqrt{h},\y^\p) | \;\rd{\y_*^\p} \bigg)^2 \;\rd{y_1^\p}\rd{s^\p},
  \]
  and hence it suffices to show that 
  \begin{equation*}
  \int_0^\infty \int_\R \bigg( \int_{\R^{n-1}} | Q_{s+1}(\x,\y) - Q_{s}(\x,\y) | \;\rd{\y_*} \bigg)^2 \;\rd{y_1}\rd{s} < \infty
  \end{equation*}
  uniformly for $x\in\R^n$ which we shall do by dividing the time integral from 0 to 1 and from 1 to $\infty$ and bounding each of them separately.
  Firstly, by Lemma \ref{lem:integralofKSquared}, the non-negativity of $Q_t$ and the inequality $(a + b)^2 \leq 2(a^2 + b^2)$, we have
  \begin{align*}
    \int_0^1 \int_\R \bigg( \int_{\R^{n-1}} |Q_{s+1}(\x,\y) - Q_s(\x,\y)| & \;\rd{\y_*} \bigg)^2 \;\rd{y_1}\rd{s} \\
    &\leq 2 \int_0^1\int_\R K_{s+1}(\x,y)^2 + K_s(\x,y)^2 \;\rd{y}\rd{s} \\
    &< C,
  \end{align*}
  with $C>0$ independent of $\x$.
  On the other hand, we have
  \[
      | Q_{s+1}(\x,\y) - Q_s(\x,\y) | = \bigg| \int_s^{s+1} \frac{\partial Q_r}{\partial r}(\x,\y)  \;\rd{r} \bigg|,
  \]
  and so we need to estimate the derivative of $Q_t$.
  Using the Harish-Chandra formula and denoting $A_U = (D_\y - UD_\x U^\dagger)^2$ we see that
  \begin{equation*}
    \frac{\partial Q_r}{\partial r}(\x,\y) 
    = c_n (2\pi)^{-n/2} r^{-n^2/2} \Delta(\y)^2 \int_{\mathcal{U}(n)} e^{-\mathrm{Tr} A_U/2r} \bigg(\frac{\mathrm{Tr} A_U}{2r^2} - \frac{n^2}{2r}\bigg) \;\rd{U},
  \end{equation*}
  where we have applied Proposition \ref{prop:DiffUnderIntegral} with $g\equiv 1$ to swap the derivative and the integral.
  Then in a similar manner as in the proof of Lemma \ref{prop:L1Derivative} we have
  \begin{align}
    \left|\frac{\partial Q_r}{\partial r}(\x,\y)\right| 
    &\leq \frac{C^\p}{r} c_n (2\pi)^{-n/2} r^{-n^2/2} \Delta(\y)^2 \int_{\mathcal{U}(n)} e^{-\mathrm{Tr} A_U/4r} \;\rd{U} \notag \\
    &= \frac{C}{r} Q_{2r}(\x,\y),
    \label{eq:QtTimeDerivative}
  \end{align}
  for some constant $C>0$ depending only on $n$.
  Therefore, by Minkowski's integral inequality and Lemma \ref{lem:integralofKSquared} 
  \begin{align*}
    \bigg( \int_\R \bigg( \int_{\R^{n-1}} \bigg| \int_s^{s+1} \frac{\partial Q_r}{\partial r}(\x,\y) \;\rd{r} \bigg| & \;\rd{\y_*} \bigg)^2 \rd{y_1}\bigg)^{1/2} \\
    &\leq C\bigg( \int_\R \bigg( \int_{\R^{n-1}} \int_s^{s+1} \frac{1}{r} Q_{2r}(\x,\y) \;\rd{r} \;\rd{\y_*} \bigg)^2 \rd{y_1}\bigg)^{1/2} \\
    &\leq C\int_s^{s+1} \frac{1}{r} \bigg(\int_\R K_{2r}(\x,y_1)^2 \;\rd{y_1}\bigg)^{1/2} \rd{r} \\
    &\leq C^\p s^{-5/4}.
  \end{align*}
  Consequently,
  \[
    \int_1^\infty \int_\R \bigg( \int_{\R^{n-1}} | Q_{s+1}(\x,\y) - Q_s(\x,\y) | \;\rd{\y_*} \bigg)^2 \;\rd{y_1}\rd{s} \leq C^{\p 2} \int_1^\infty s^{-5/2} \;\rd{s} < \infty.
  \]

  Finally, by Lemma \ref{lem:integralofKSquared} we have
  \begin{align*}
    \int_u^t \int_\mathbb{R} K_{s}(\x,y)^2 \;\rd{y} \rd{s} 
    \leq C_4 \int_u^t s^{-1/2} \;\rd{s}
    \leq 2C_4 |t-u|^{1/2}.
  \end{align*}
  This completes the whole proof of the theorem.
\end{proof}

\section{Existence, Uniqueness and Moment Estimates}\label{sec:existence}

\subsection{Bounded Initial Data}

We now prove the existence, uniqueness and moment estimates part of Theorem \ref{thm:MnMain}(a).
The proof of continuity will be delayed to Section \ref{sec:cty}.
In the sequel constants will generally be denoted by $c$, $C$ or $K$ and possibly adorned with primes or subscripts. 
They may differ from line to line and their dependence if any will always be specified.
However, $C_i$, $1\leq i\leq 4$ will always mean the constants in Theorem \ref{thm:DysonKernelCty} and Lemma \ref{lem:integralofKSquared}.
$T>0$ will always denote the finite time horizon.

\begin{proof}[Proof of existence, uniqueness and moment estimates of Theorem \ref{thm:MnMain}(a)]
  The proof is by a Picard iteration argument.
  Throughout the proof, we fix an arbitrary integer $p\geq 2$.
  For $(t,\y)\in(0,\infty)\times\mathbb{R}^n$ define $m^0(t,\y) := J_n(t,\y)$ where $J_n$ was defined in (\ref{eq:MnBounded}) and for $k\geq 1$, let
  \begin{align}
    m^k(t,\y) 
    &= m^0(t,\y) + \frac{1}{(n-1)!} \int_0^t \int_{\mathbb{R}^n} Q_{t-s}(\y,\y^\prime) m^{k-1}(s,\y^\prime) \;\rd{\y_*^\prime} \;\W{s}{y_1^\prime} \notag \\
    &=: m^0(t,\y) + I^k(t,\y).
    \label{eq:PicardSequence}
  \end{align}
  \subsubsection*{Claim 1: The stochastic integrals $I_k$ are well defined}
  We need to show that for all $(t,\y)\in(0,\infty)\times\mathbb{R}^n$, the random field $\big(f_k(s,y), (s,y)\in(0,t)\times\mathbb{R}\big)$ defined by $f_k(s,y_1^\prime) := \int_{\mathbb{R}^{n-1}} Q_{t-s}(\y,\y^\prime) m^k(s,\y^\prime) \;\rd{\y_*^\prime}$ is in $\mathscr{P}_2$ for all $k\geq 0$.
  
  Fix $(t,\y)\in(0,\infty)\times\mathbb{R}^n$ and consider $f_0(s,y_1^\prime) = \int_{\mathbb{R}^{n-1}} Q_{t-s}(\y,\y^\prime) m^0(s,\y^\prime) \;\rd{\y_*^\prime}$.
  We need to show that $m^0$ satisfies the three assumptions of Proposition \ref{prop:predictability2}. 
  Since the initial data $g$ is $\mathscr{F}_0$-measurable, $m^0$ is adapted to the filtration $(\mathscr{F}_t)_{t\geq 0}$.
  By assumption on $g$, $\sup_{\y\in\mathbb{R}^n} \Vert g(\y)\Vert_p \leq K_{p,g} < \infty$ and hence by Minkowski's integral inequality, we have for all $t>0$
  \begin{align}
    \V m^0(t,\y) \V_p  
    &\leq \frac{1}{n!} \int_{\mathbb{R}^n} \V g(\y^\prime) \V_p Q_t(\y,\y^\prime) \;\mathrm{d}\y^\prime \notag \\
    &\leq \bigg(\sup_{\y\in\mathbb{R}^n} \V g(\y) \V_p\bigg) \frac{1}{n!}\int_{\mathbb{R}^n} Q_t(\y,\y^\prime) \;\mathrm{d}\y^\p \notag \\
    &\leq K_{p,g}.
    \label{eq:JngBound}
  \end{align}
  By Lemma \ref{lem:MnJtermCty} below, $(s,\y^\prime) \mapsto m^0(s,\y^\p)$ is $L^2(\Omega)$-continuous over $(0,t)\times\R^n$ and so Proposition \ref{prop:predictability2} implies that $f_0\in\mathscr{P}_2$ and 
  \[
    \int_0^t \int_{\mathbb{R}^n} Q_{t-s}(\y,\y^\prime) m^0(s,\y^\prime) \;\rd{\y_*^\prime} \:\W{s}{y_1^\prime},
  \]
  is a well defined Walsh integral.
  Consequently, the random field $\big(m^1(t,\y) = m^0(t,\y) + I^1(t,\y), (t,\y)\in(0,\infty)\times\mathbb{R}^n\big)$ is well defined.

  We wish to show that the sequence $\{m^k(t,\y)\}_{k\geq 0}$ is Cauchy in $L^p(\Omega)$. 
  To this end, let $d_k(t,\y) := \V m^{k+1}(t,\y) - m^k(t,\y) \V_p$.
  By Lemma \ref{lem:integral}, Lemma \ref{lem:integralofKSquared} and (\ref{eq:JngBound}), we have for all $(t,\y)\in(0,\infty)\times\R^n$,
  \begin{align*}
    d_0(t,\y)^2 
    &\leq A_n^2 c_p^2 \int_0^t\int_\R \bigg(\int_{\R^{n-1}} Q_{t-s}(\y,\y^\p) \V m^0(s,\y^\p) \V_p \;\rd{\y_*^\p} \bigg)^2 \rd{y_1^\p}\rd{s} \\
    &\leq 2K_{p,g}^2 C_4 A_n^2 c_p^2 \sqrt{t} \\
    &= K_{p,g}^2 C_4 A_n^2 c_p^2 \sqrt{\pi} \frac{\sqrt{t}}{\Gamma\big(\frac{3}{2}\big)},
  \end{align*}
  where $\Gamma(3/2) = \sqrt{\pi}/2$ and $A_n$ denotes $1/(n-1)!$.
 
  Now assume for induction that for all $0\leq l\leq k$, $\big(m^l(t,\y), (t,\y)\in(0,\infty)\times\mathbb{R}^n\big)$ is well defined and satisfies
  \begin{itemize}
    \item[(i)] $m^l$ is adapted,
    \item[(ii)] $(s,\y)\mapsto m^l(s,\y)$ is $L^2(\Omega)$-continuous on $(0,t)\times\mathbb{R}^n$ for all $t>0$,
    \item[(iii)] for all $(t,\y)\in(0,\infty)\times\mathbb{R}^n$ and $0\leq l\leq k-1$
      \begin{equation}
        d_l(t,\y)^2 \leq K_{p,g}^2 (C_4A_n^2c_p^2\sqrt{\pi})^{l+1} \frac{t^{(l+1)/2}}{\Gamma\big(\frac{l+1}{2}+1\big)}.  
        \label{eq:induction3}
      \end{equation}
  \end{itemize}
  We want to show that the same is true for $m^{k+1}$ and $d_k$.
  Let $(t,\y)\in(0,\infty)\times\R^n$. 
  Observe that $m^k(t,\y) = m^0(t,\y) + \sum_{l=1}^k m^l(t,\y) - m^{l-1}(t,\y)$, and so to bound the $p$th moments of $m^k$ it suffices to bound each of the $d_l$'s, $0\leq l\leq k-1$. 
  Indeed, by property (iii) and (\ref{eq:JngBound}), we have
  \begin{align}
    \V m^k(t,\y) \V_p^2 
    &\leq 2\V m^0(t,\y) \V_p^2 + \sum_{l=1}^k 2^{l+1} d_{l-1}(t,\y)^2 \notag \\
    &\leq 2K_{p,g}^2 \sum_{l=0}^k (2C_4A_n^2c_p^2\sqrt{\pi})^l \frac{t^{l/2}}{\Gamma\big(\frac{l}{2}+1\big)},
    \label{eq:mkPthMoment}
  \end{align}
  which shows that $\sup_{(s,\y)\in[0,t]\times\mathbb{R}^n} \Vert m^k(s,\y)\Vert_2 < \infty$. 
  This and the induction hypothesis shows that $m^k$ satisfies all three assumptions of Proposition \ref{prop:predictability2} and
  \[
    I^{k+1}(t,\y) = A_n \int_0^t \int_{\mathbb{R}^{n}} Q_{t-s}(\y,\y^\prime) m^k(s,\y^\prime) \;\rd{\y_*^\prime} \:\W{s}{y_1^\prime},
  \]
  is a well defined Walsh integral for all $(t,\y)\in(0,\infty)\times\R^n$.
  Moreover, it is adapted and so $m^{k+1} = m^0 + I^{k+1}$ is also adapted.
  We need to check the $L^2(\Omega)$-continuity of $I^{k+1}$.
  By Theorem \ref{thm:DysonKernelCty} we have for all $0\leq r\leq u\leq t$ and $\y$, $\z\in\mathbb{R}^n$ that
  \begin{align*}
    \Vert I^{k+1}(u,\y) &- I^{k+1}(r,\z) \Vert_2^2 \\
    &\leq 2 A_n^2 \int_0^r\int_\mathbb{R} \bigg(\int_{\mathbb{R}^{n-1}} \big| Q_{u-s}(\y,\y^\prime) - Q_{r-s}(\z,\y^\prime) \big| \Vert m^k(s,\y^\prime)\Vert_2 \;\rd{\y_*^\prime} \bigg)^2 \dd{y_1^\prime}{s} \\
    &\qquad + 2A_n^2 \int_r^u \int_\mathbb{R} \bigg(\int_{\mathbb{R}^{n-1}} Q_{u-s}(\y,\y^\prime) \V m^k(s,\y^\p)\V_2 \;\rd{\y_*^\prime} \bigg)^2 \dd{y_1^\prime}{s} \\
    &\leq 2A_n^2 (C_1+C_2+C_3) \sup_{(s,\y^\prime)\in[0,t]\times\mathbb{R}^n} \Vert m^k(s,\y^\prime) \Vert_2^2 \big(|\y-\z| + |u-r|^{1/2}\big),
  \end{align*}
  which proves the $L^2(\Omega)$-continuity of $m^{k+1}$ on $(0,t)\times\R^n$.
  
  For the bound on $d_k$, we use Lemmata \ref{lem:integral} and \ref{lem:integralofKSquared} and the induction hypothesis to obtain
  \begin{align}
    d_k(t,\y)^2 
    &\leq A_n^2 c_p^2 \int_0^t\int_{\R} \bigg(\int_{\R^{n-1}} Q_{t-s}(\y,\y^\p) d_{k-1}(s,\y^\p) \;\rd{\y_*^\p} \bigg)^2 \rd{y_1^\p} \rd{s} \notag \\
    &\leq K_{p,g}^2 (C_4 A_n^2 c_p^2)^{k+1} \pi^{k/2} \int_0^t \frac{s^{k/2}}{\Gamma\big(\frac{k}{2}+1\big)} (t-s)^{-1/2} \;\rd{s} \notag \\
    &= K_{p,g}^2 (C_4 A_n^2 c_p^2 \sqrt{\pi})^{k+1} \frac{t^{(k+1)/2}}{\Gamma\big(\frac{k+1}{2}+1\big)},
    \label{eq:dkBound}
  \end{align}
where we have used the Euler Beta integral \cite[equation 5.12.1]{OLBC10}: 
  \begin{equation}
    \int_0^1 u^{a-1} (1-u)^{b-1} \;\rd{u} = \frac{\Gamma(a) \Gamma(b)}{\Gamma(a+b)}, \quad a,b>0,
    \label{eq:EulerBeta}
  \end{equation}
  and the fact that $\Gamma(1/2) = \sqrt{\pi}$ to evaluate the time integral.
  It follows that the bound (\ref{eq:mkPthMoment}) holds with $k$ replaced with $k+1$. 
  
  Thus, by induction we conclude that for all integers $k$, the random field $\big(m^k(t,\y) = m^0(t,\y) + I^k(t,\y), (t,\y)\in(0,\infty)\times\mathbb{R}^n\big)$ is well defined and satisfies properties (i), (ii) and (iii) listed above.

  \subsubsection*{Claim 2: The sequence $\{m^k(t,\y)\}_{k\geq 0}$ is Cauchy in $L^p(\Omega)$}
  This follows from the fact that for any $T>0$
  \[
    \sup_{(t,\y)\in[0,T]\times\R^n} \sum_{k=0}^\infty d_k(t,\y) < \infty,
  \]
  which is a consequence of equation \eqref{eq:induction3}, the ratio test and the following asymptotic: $\frac{\Gamma(z+a)}{\Gamma(z+b)} \sim z^{a-b}$, as $z\to\infty$, see \cite[equation 5.11.12]{OLBC10}.
   We conclude that there exist a random field which we denote by $M_n^g(t,\y)$ such that $m^k(t,\y) \to M_n^g(t,\y)$ as $k\to\infty$ in $L^p(\Omega)$ uniformly in $\y\in\mathbb{R}^n$ and $t\in[0,T]$.
  
  \subsubsection*{Claim 3: $M_n^g(t, \y)$ solves equation \eqref{eq:MnBounded}}
  Since each $m^k$ is adapted, $M_n^g$ is also adapted.
  The $L^2(\Omega)$-continuity of $M_n^g$ is inherited from that of $m^k$ since the convergence is uniform on $[0,T]\times\mathbb{R}^n$ for all $T>0$.
  Now take $k\to\infty$ on both sides of (\ref{eq:mkPthMoment}).
  By \cite[Proposition 2.2]{CD13}, we know that for all $x\geq 0$
  \begin{equation}
    e^{x^2}\big( 1 +\mathrm{erf}(x) \big) = \sum_{k=1}^\infty \frac{x^{k-1}}{\Gamma\left( \frac{k+1}{2} \right)}.
    \label{eq:errorFunctionSeries}
  \end{equation}
  Using this with $x = 2C_4A_n^2c_p^2\sqrt{\pi} t^{1/2}$ and the fact that $|\textrm{erf}(\cdot)| \leq 1$ gives the bound (\ref{eq:MnPthMoment}) in the statement of the theorem.
  Thus, by Proposition \ref{prop:predictability2}, for all $(t,\y)\in(0,\infty)\times\R^n$ the random field $f$ defined by $f(s,y_1^\prime) = \int_{\mathbb{R}^{n-1}} Q_{t-s}(\y,\y^\prime) M_n^g(s,\y^\prime) \;\rd{\y_*^\prime}$ for $(s,y_1^\p)\in(0,t)\times\R$ is in $\mathscr{P}_2$ and the stochastic integral
  \[
    I_n(t,\y) = A_n \int_0^t\int_{\mathbb{R}^n} Q_{t-s}(\y,\y^\prime) M_n^g(s,\y^\prime) \;\rd{\y_*^\prime} \:\W{s}{y_1^\prime},
  \]
  is well defined. 

  It remains to show that the limit $M_n^g(t,\y)$ solves (\ref{eq:MnBounded}).
  Fix $(t,\y)\in(0,\infty)\times\mathbb{R}^n$.
  By definition, $m^k(t,\y) = m^0(t,\y) + I^k(t,\y)$ where the left hand side converges in $L^p(\Omega)$ to $M_n^g(t,\y)$.
  For the right hand side we have by the uniform convergence in $L^p(\Omega)$ of $m^k$ that
  \begin{align*}
  \Vert I^k(t,\y) - I_n(t,\y) \Vert_p^2 
  &\leq 2C_4 A_n^2 c_p^2 \sqrt{t} \sup_{(s,\y^\prime) \in [0,t]\times\mathbb{R}^n} \Vert m^{k-1}(s,\y^\prime) - M_n^g(s,\y^\prime)\Vert_p^2 \\
  &\to 0 \quad \text{as } k\to\infty. 
  \end{align*}
  The limit of both sides of the equation $m^k(t,\y) = m^0(t,\y) + I^k(t,\y)$ must be equal almost surely and so we have shown that for each $(t,\y)\in (0,\infty)\times\mathbb{R}^n$, $M_n^g(t,\y)$ satisfies (\ref{eq:MnBounded}) almost surely.
  This proves existence.
 
  \subsubsection*{Claim 4: The solution $M_n^g(t, \y)$ is unique}
  Suppose that $M(t,\y)$ and $N(t,\y)$ are both solutions to (\ref{eq:MnBounded}) with the same initial data $g$ and let $d(t,\y) = \V M(t,\y) - N(t,\y) \V_p$ then by a similar calculation as for existence we have for each $k$,
  \begin{equation}
    d(t,\y)^2 \leq \sup_{(s,\y)\in[0,t]\times\R^n} d(s,\y)^2 \; (C_4A_n^2c_p^2\sqrt{\pi})^k \frac{t^{k/2}}{\Gamma\big(\frac{k}{2}+1\big)},
    \label{eq:uniqueness}
  \end{equation}
  which converges to 0 as $k\to\infty$.
  Therefore, $d\equiv 0$ and so for all $(t,\y)$, $M(t,\y) = N(t,\y)$ almost surely i.e. $M$ and $N$ are versions of each other.
  This proves uniqueness.
\end{proof}

\subsection{Delta Initial Data}

We now turn our attention to the integral equation \eqref{eq:MnDelta}.
In this case, the solutions no longer have $p$th moments which are bounded uniformly in time and so we need a different approach to the one used in the previous subsection. 
Our proof uses the chaos expansion \eqref{eq:MnChaos1} and the local time estimates in Lemmata \ref{lem:localTimeFormula} and \ref{lem:localTimeExpMoments}.

\begin{proof}[Proof of moment estimates and fundamental solution property of  Theorem \ref{thm:MnMain}(b)] 


  \hspace{.1in} \newline Consider $M_n(t,\x,\y)$ defined by the chaos expansion \eqref{eq:MnChaos1}.
  We first estimate its $p$th moments for $p\geq 2$. 
  \subsubsection*{Claim 1: The $p$th moments of $M_n(t,\x,\y)$ satisfies \eqref{eq:MnPthMomentDelta}}
  The approach is to construct an approximating sequence to $M_n$ and estimate the moments of each term of the sequence and take limits.
  The natural candidate for the approximating sequence is the following: for each $(t,\x,\y)\in(0,\infty)\times\mathbb{R}^n\times\mathbb{R}^n$, let $m^0(t,\x,\y) := J_n(t,\x,\y)$ where $J_n$ was defined in (\ref{eq:MnDelta}) and for $k\geq 1$ define
  \[
    m^k(t,\x,\y) = m^0(t,\x,\y)\bigg(1 + \sum_{l=1}^k \int_{\Delta_l(t)} \int_{\mathbb{R}^l} R_l(\mb{s},\mb{y}^\p; t,\x,\y) \;W^{\otimes l}(\db{s},\db{y}^\p) \bigg).
  \]
  In other words, $m^k(t,\x,\y)$ is the $k$th partial sum of the chaos expansion for $M_n(t,\x,\y)$.
  Let $d_k(t,\x,\y) = \V m^{k+1}(t,\x,\y) - m^k(t,\x,\y) \V_p$ then by Lemma \ref{lem:LpChaos}
  \begin{equation}
    d_{k-1}(t,\x,\y)^2 \leq c_p^{2k}m^0(t,\x,\y)^2 \int_{\Delta_k(t)} \int_{\R^k} R_{k}(\mb{s},\mb{y}^\p;t,\x,\y)^2 \;\rd{\y}^\p\rd{\mb{s}}.
    \label{eq:dkPthMoment}
  \end{equation}
  Therefore,
  \begin{align*}
    \Vert m^k(t,\x,\y) \Vert_p^2 
    &\leq 2m^0(t,\x,\y)^2 + \sum_{l=1}^k 2^{l+1} d_{l-1}(t,\x,\y)^2 \\
    &\leq 2m^0(t,\x,\y)^2 \bigg(1 + \sum_{l=1}^k (2c_p^2)^l \int_{\Delta_l(t)} \int_{\mathbb{R}^l} R_l(\mb{s},\mb{y}^\p; t,\x,\y)^2 \;\db{y}^\p\db{s} \bigg).
  \end{align*}
  Each term in the sum above is equal to $(2c_p^2)^l \mathbb{E}_{\x,\y;t}^{X,Y}\big[\big( \sum_{i,j=1}^n L_t(X^i-Y^j) \big)^l \big] / l!$ by Lemma \ref{lem:localTimeFormula} where $X = (X^1,\ldots,X^n)$, $Y = (Y^1,\ldots,Y^n)$ are independent copies of a collection of $n$ non-intersecting Brownian bridges which start at $\x$ in time 0 and end at $\y$ in time $t$.
  Letting $k\to\infty$ we have for all $(t,\x,\y)\in(0,\infty)\times\mathbb{R}^n\times\mathbb{R}^n$
  \begin{equation}
    \lim_{k\to\infty} \Vert m^k(t,\x,\y) \Vert_p^2 \leq 2m^0(t,\x,\y)^2 \mathbb{E}_{\x,\y;t}^{X,Y}\Big[ \exp\Big( 2c_p^2 \sum_{i,j=1}^n L_t(X^i-Y^j) \Big)\Big].
    \label{eq:limMkPthMoment}
  \end{equation}
  For each $t>0$ and $p\geq 2$, Lemma \ref{lem:localTimeExpMoments} shows that the right hand side of the previous display is bounded above by $Ct^{-n^2}$ for all $\x$, $\y\in\R^n$ for some constant $C$ depending on $n$ and $p$.   
  By Cauchy--Schwarz inequality
  \begin{align}
    \V m^k(t,\x,\y) &- m^{k^\p} (t,\x,\y) \V_p^p \notag \\
        &\leq \V m^k(t,\x,\y) - m^{k^\p}(t,\x,\y) \V_2 \V m^k(t,\x,\y) - m^{k^\p}(t,\x,\y) \V_{2(p-1)}^{p-1}.
    \label{eq:mkCauchy}
  \end{align}
  It follows from \cite[Proposition 5.1]{OW11} that \eqref{eq:MnChaos1} is convergent in $L^2(\Omega)$ and so the partial sum $m^k$ converges to $M_n$ in $L^2(\Omega)$. 
  Combining this with the moment bound \eqref{eq:limMkPthMoment} shows that \eqref{eq:mkCauchy} converges to 0 as $k$, $k^\p\to\infty$.
  Therefore, $m^k(t,\x,\y)$ also converges to $M_n(t,\x,\y)$ in $L^p(\Omega)$ and we can replace the left hand side of (\ref{eq:limMkPthMoment}) with $\Vert M_n(t,\x,\y) \Vert_p^2$. 
  
  \subsubsection*{Claim 2: $M_n(t,\x,\y)$ is a solution to \eqref{eq:MnDelta}}
  We show that $M_n(t,\x,\y)$ defined by (\ref{eq:MnChaos1}) satisfies equation (\ref{eq:MnDelta}) for all $(t,\x,\y)\in(0,\infty)\times\R^n\times\R^n$.
  Recall that $M_n(t,\x,\y)$ is well defined on the boundary of the Weyl chamber and it is symmetric under permutations of both its space variables, hence we can extend it to a function on $\R^n\times\R^n$. 
  Similarly we also extend $Q_{t-s}(\x,\y)$ to the whole of $\R^n\times\R^n$. 
  
  First observe that as a consequence of the Markovian structure of   the correlation function $R_k$, see \eqref{eq:Rk},  we have for  $\mb{s}=(s_1,s_2, \ldots, s_k)$ with $s_1<s_2 \ldots<s_k<s<t$  and $\mb{z}=(z_1, z_2,\ldots,z_k) \in \R^k$ together with  $\x,\y,\y^\prime \in \R^n$ that
  \begin{multline}
  \frac{1}{(n-1)!} \int_{\R^{n-1}}Q_{t-s}(\y,\y^\prime) \frac{p_n^*(s,\x,\y^\prime)} {\Delta(\x)\Delta(\y^\prime)} \; R_k(\mb{s},\mb{z}; s,\x,\y^\prime) \;\rd{\y_*^\prime}    \\
 = \frac{p_n^*(t,\x,\y)}{\Delta(\x)\Delta(\y)} R_{k+1}((\mb{s},s),(\mb{z},y_1^\prime); t,\x,\y).
 \label{eq:bridgeMarkovprop}
  \end{multline}
  Thus, substituting the chaos expansion of $M_n$ into the stochastic integral term of \eqref{eq:MnDelta},  the $k$th term of the expansion gives a contribution 
   \begin{multline}
    \label{eq:chaosintegral}
 \frac{1}{(n-1)!} \int_0^t \int_{\R^n}  Q_{t-s}(\y,\y^\prime) \frac{p_n^*(s,\x,\y^\prime)}{\Delta(\x)\Delta(\y^\prime)} \times \\
 \int_{\Delta_k(s)} \int_{\mathbb{R}^k} R_k(\mb{s},\mb{z}; s,\x,\y^\prime)  \mW{k}{\mb{s}}{\z} \:\rd{\y_*^\prime} \:\W{s}{y^\prime_1}     \\
 = \frac{p_n^*(t,\x,\y)}{\Delta(\x)\Delta(\y)} \int_{\Delta_{k+1}(t)} \int_{\mathbb{R}^{k+1}}  R_{k+1}(\mb{s},\mb{z}; t,\x,\y) \mW{k+1}{\mb{s}}{\z}.
  \end{multline}
  The interchange of the order of integration is justified by the stochastic Fubini's theorem (Lemma \ref{lem:stochfubini}, which also holds for a multiple stochastic integral) since for each $s$ and $y_1^\prime$,
  \begin{multline*}
 \int_{\R^{n-1} }  Q_{t-s}(\y,\y^\prime) \left(\frac{p_n^*(s,\x,\y^\prime)} {\Delta(\x)\Delta(\y^\prime)} \right)^2  \int_{\Delta_k(s)} \int_{\mathbb{R}^k} R_k(\mb{s},\mb{z}; s,\x,\y^\prime)^2  \:\rd\mb{s}\: \rd\z\:\rd{\y_*^\prime}  \\
  \leq  \int_{\R^{n-1} }  Q_{t-s}(\y,\y^\prime)  \Vert M_n(s,\x,\y^\prime) \Vert_2^2 \:\rd{\y_*^\prime}<\infty,
  \end{multline*}
  appealing to Lemmata  \ref{lem:localTimeFormula} and  \ref{lem:localTimeExpMoments}.

   Summing over $k$  in  \eqref{eq:chaosintegral}  gives
  \begin{multline}
  \frac{1}{(n-1)!} \int_0^t \int_{\R^n} Q_{t-s}(\y,\y^\prime) M_n(s,\x,\y^\prime) \;\rd{\y_*^\prime} \W{s}{y_1^\prime} \\ 
  = \frac{p_n^*(t,\x,\y)}{\Delta(\x)\Delta(\y)} \sum_{k=1}^\infty \int_{\Delta_k(t)}\int_{\mathbb{R}^k} R_k(\mathbf{s},\mathbf{z}; t,\x,\y) \;\mW{k}{\mb{s}}{\z},
  \label{eq:integralofMn}
  \end{multline}
  provided the sum can be passed though the integrals and the integral on the lefthand side is well defined. 
  To show that the integrand belongs to $\sP_2$ and so the integral is well defined, we first show that the integrand is $L^2(\Omega)$-continuous.
  By \cite[Lemma 6.1]{OW11}, $M_n$ is $L^2(\Omega)$-continuous and the same is true for the above integrand by the same manner as in the proof of Proposition \ref{prop:predictability2} and appealing again to Lemmata \ref{lem:localTimeFormula} and \ref{lem:localTimeExpMoments} to bound the second moment of $M_n$.
  
  Next we calculate, by multiplying the chaos expansions, that
  \begin{multline*}
  \E \bigl[ M_n(s,\x,\y^\prime)M_n(s,\x,\y^{\prime\prime})\bigr]= \frac{p_n^*(s,\x,\y^\prime)}{\Delta(\x)\Delta(\y^\prime)} \frac{p_n^*(s,\x,\y^{\prime\prime})}{\Delta(\x)\Delta(\y^{\prime\prime})} \times \\
   \bigg( 1 + \sum_{k=1}^\infty \int_{\Delta_k(s)}\int_{\mathbb{R}^k} R_k(\mathbf{s},\mathbf{z}; s,\x,\y^\prime) R_k(\mathbf{s},\mathbf{z}; s,\x,\y^{\prime\prime})  \: \rd\mb{s}\:\rd\z  \bigg).
 \end{multline*} 
 Then using this, together with \eqref{eq:bridgeMarkovprop} and monotone convergence,  we deduce that, with $A_n$ denoting $1/(n-1)!$,
 \begin{multline*}
 A_n^2 \int_0^t \int_{\R}  \left\Vert \int_{\R^{n-1}} Q_{t-s}(\y,\y^\prime) M_n(s,\x,\y^\prime) \;\rd{\y^\prime_*} \right\Vert_2^2 \rd{y^\prime_1}\:\rd{s}  \\
 =
 \left( \frac{p_n^*(t,\x,\y)}{\Delta(\x)\Delta(\y)}\right)^2 \sum_{k=1}^\infty \int_{\Delta_k(t)}\int_{\mathbb{R}^k} R_k(\mathbf{s},\mathbf{z}; t,\x,\y)^2 \:\rd\mb{s}\: \rd\z <\infty.
 \end{multline*}
 Therefore, by Proposition \ref{prop:predictability} the integral on the lefthand side of \eqref{eq:integralofMn} is well defined.

 Similarly we can calculate  
 \[
  A_n^2 \int_0^t \int_{\R}  \left\Vert \int_{\R^{n-1}} Q_{t-s}(\y,\y^\prime)\bigl( M_n(s,\x,\y^\prime)-m^k(s,\x,\y^\prime) \bigr) \rd{\y^\prime_*} \right\Vert_2^2 \rd{y^\prime_1}\:\rd{s} ,
  \] 
  where  $m^k(s,\x,\y^\prime)$ is the $k$th partial sum of the chaos expansion for $M_n(s,\x,\y^\prime)$, obtaining,
  \[
   \left( \frac{p_n^*(t,\x,\y)}{\Delta(\x)\Delta(\y)}\right)^2 \sum_{l=k+2}^\infty \int_{\Delta_l(t)}\int_{\mathbb{R}^l} R_l(\mathbf{s},\mathbf{z}; t,\x,\y)^2 \:\rd\mb{s}\: \rd\z.
  \]
    We observe that   this tends to  zero as $k$ tends to infinity, and so prove  the interchange of sum and integral to obtain that  \eqref{eq:integralofMn} was valid.

  Finally it follows  from \eqref{eq:integralofMn} that the right hand side of (\ref{eq:MnDelta})  is equal to
  \[
    \frac{p_n^*(t,\x,\y)}{\Delta(\x)\Delta(\y)} \bigg( 1 + \sum_{k=1}^\infty \int_{\Delta_k(t)}\int_{\mathbb{R}^k} R_k(\mathbf{s},\mathbf{z}; t,\x,\y) \;\mW{k}{\mb{s}}{\z} \bigg), 
  \]
  which is the definition of $M_n(t,\x,\y)$ as required.
 
\subsubsection*{Claim 3: $M_n$ is a fundamental solution to \eqref{eq:MnDelta}}
Let $g$ be a function satisfying the assumptions of part (a) of the theorem and define
 \[
v(t,\y) := \frac{1}{n!} \int_{\R^n} g(\x)M_n(t,\x,\y) \Delta(\x)^2 \;\rd{\x}.
\]
 The integral defining $v$ is well defined; noting that $g(x)$ and $M_n(t,x,y)$ are independent, we have
\[
\E \left [ \int_{\R^n} |g(\x)M_n(t,\x,\y) | \Delta(\x)^2 \;\rd{\x} \right] \leq   \sup_{\x \in \R^n} \Vert g(\x) \Vert _1 \:\int_{\R^n} \Vert M_n(t,\x,\y) \Vert_1 \Delta(\x)^2 \;\rd{\x}< \infty,
\]
 where the integral is finite because
\begin{multline*}
\int_{\R^n} \Vert M_n(t,\x,\y) \Vert_1 \Delta(\x)^2 \;\rd{\x}= \int_{\R^n} \E\bigl[M_n(t,\x,\y)\bigr]\Delta(\x)^2 \;\rd{\x} \\ =  \int_{\R^n} p^*_n(t,\x,\y) \frac{ \Delta(\x)}{\Delta(\y)}\;\rd{x}=n!,
\end{multline*}
noting that $M_n(t,\x,\y)$ is non-negative by \cite[Proposition 5.5]{OW11}.
We claim that
\begin{equation*}
 v(t,\y) 
    = \frac{1}{n!}\int_{\mathbb{R}^n} g(\y^\prime) Q_t(\y,\y^\prime) \;\mathrm{d}\y^\prime + \frac{1}{(n-1)!}\int_0^t \int_{\mathbb{R}^n} Q_{t-s}(\y,\y^\prime) v(s,\y^\prime) \;\rd{\y_*^\prime} \;\W{s}{y_1^\prime}
\end{equation*}
from which it follows, by the  uniqueness of solutions of \eqref{eq:MnBounded}, that $v(t,\y)=M^g_n(t,\y)$ with probability one.

Multiplying both sides of \eqref{eq:MnDelta} by $g(\x)\Delta(\x)^2$ and integrating with respect to $\x$, the claim will follow, provided we justify exchanging the order of  the integrations on the righthand side. Exchanging the integrals with respect to $\x$ and $\y^\prime_*$ is straightforward. To use the stochastic Fubini Theorem to interchange the integral with respect to $\x$ and the stochastic integral it suffices to show that 
\[
 \int_{\mathbb{R}^n} \frac{\Delta(\x)^2}{\rho(\x)}   \int_0^t \int_{\mathbb{R}} \left\Vert \: g(\x) \int_{\mathbb{R}^{n-1}} Q_{t-s}(\y,\y^\prime) M_n(s,\x,\y^\prime) \;\rd{\y_*^\prime}\right\Vert_2^2\:\rd{y_1^\prime}\: \rd{s}\:\rd{\x},
\]
is finite, where $\rho$ is some chosen  positive density  satisfying $ \int_{\R^n} \rho(\x) \Delta(\x)^2 \: \rd{\x} <\infty.$  This follows by taking the measure $\mu$ to be $ \rho(\x) \Delta(\x)^2 \: \rd{\x} $  in Lemma \ref{lem:stochfubini}. Now, equation \eqref{eq:MnDelta} implies that, with $A_n$ denoting $1/(n-1)!$ as previously,
\begin{multline*}
 A_n^2 \int_0^t \int_{\mathbb{R}} \left\Vert \:  \int_{\mathbb{R}^{n-1}} Q_{t-s}(\y,\y^\prime) M_n(s,\x,\y^\prime) \;\rd{\y_*^\prime}\right\Vert_2^2\:\rd{y_1^\prime}\: \rd{s}
+ \left( \frac{p_n^*(t,\x,\y)}{\Delta(\x)\Delta(\y)}\right)^2=\\
\left\Vert M_n(t,\x,\y) \right\Vert_2^2,
\end{multline*}
and consequently, using the independence of $g(\x)$ and  $M_n(t,\x,\y)$ we see that it is enough that
\[
 \Bigl( \sup_\x  \Vert g(\x) \Vert_2^2 \Big) \int_{\mathbb{R}^n} \frac{\Delta(\x)^2}{\rho(\x)}  \Vert M_n(t,\x,\y)\Vert_2^2 \:\rd{\x}
\]
is finite.  In view of the hypothesis on $g$,  and the Gaussian bound on $\Vert M_n(t,\x,\y)\Vert_2^2$ coming from the Harish-Chandra formula and \eqref{eq:supHCIntegrand}, we 
can make  both this quantity and $ \int_{\R^n} \rho(\x) \Delta(\x)^2 \: \rd{\x} $ finite by  choosing, for example,
\[
\rho(\x)= \bigl(1+ \sum x_i^2\bigr)^{-2n^2}.
\]
Thus the application of  the stochastic Fubini Theorem is justified and the result proved.

\end{proof}

\section{Continuity}\label{sec:cty}

We shall use the following version of Kolmogorov's continuity criterion which is due to Chen and Dalang, see \cite[Proposition 4.2]{CD13b}.

\begin{theorem}
  Consider a random field $\big(f(t,\y) : (t,\y)\in\mathbb{R}_+\times\mathbb{R}^d \big)$.
  Suppose there are constants $\alpha_0,\ldots,\alpha_d \in (0,1]$ such that for all $p > 2(d+1)$ and all $M>1$, there exist a constant $C := C(p,M)$ depending on $p$ and $M$ such that 
  \[
    \Vert f(t,\y) - f(s,\x) \Vert_p \leq C \bigg(|t-s|^{\alpha_0} + \sum_{i=1}^d |y_i-x_i|^{\alpha_i} \bigg),
  \]
  for all $(t,\y)$ and $(s,\x)$ in $[1/M,M]\times[-M,M]^d$. 
  Then $f$ has a modification which is locally H\"older continuous on $(0,\infty)\times\mathbb{R}^d$ with indices $(\beta\alpha_0,\ldots,\beta\alpha_d)$ for all $\beta\in(0,1)$.
  \label{thm:Kolmogorov}
\end{theorem}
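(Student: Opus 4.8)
The plan is to deduce this from the classical Kolmogorov continuity criterion by passing to a carefully chosen metric and then translating the conclusion back.

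I would first reduce to a fixed compact box. Write $R_M:=[1/M,M]\times[-M,M]^d$, so that $(0,\infty)\times\mathbb{R}^d=\bigcup_{M\geq 2}R_M$ and every compact subset of $(0,\infty)\times\mathbb{R}^d$ is contained in some $R_M$. It thus suffices to construct, for each $M$, a modification of $f|_{R_M}$ that is H\"older on $R_M$ with the claimed indices, and to glue these together over $M=2,3,\dots$. The gluing is routine: for $M'>M$ the modification built on $R_{M'}$ is also a modification of $f|_{R_M}$, hence coincides a.s.\ with the one built on $R_M$ along a fixed countable dense subset of $R_M$, and therefore a.s.\ on all of $R_M$ by continuity; removing a single null set makes the family consistent and yields the desired modification $f^*$ on $(0,\infty)\times\mathbb{R}^d$.

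Fix $M$. After an affine change of variables in each coordinate we may work on $[0,1]^{d+1}$, writing $u=(u_0,\dots,u_d)$ for the space-time point; the hypothesis then becomes $\Vert f(u)-f(v)\Vert_p\leq C\,\rho(u,v)$ for every $p>2(d+1)$, where $\rho(u,v):=\sum_{i=0}^{d}|u_i-v_i|^{\alpha_i}$. Since each $\alpha_i\le 1$, the map $x\mapsto x^{\alpha_i}$ is subadditive and $\rho$ is a genuine metric on $[0,1]^{d+1}$; moreover its covering numbers obey $N(\varepsilon)\leq c\,\varepsilon^{-D}$ with $D:=\sum_{i=0}^{d}\alpha_i^{-1}$, because any $\rho$-ball of radius $\varepsilon$ contains the Euclidean box $\prod_{i}\{|u_i-v_i|<(\varepsilon/(d+1))^{1/\alpha_i}\}$, of volume of order $\varepsilon^{D}$. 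With these two facts in hand, the bound $\Vert f(u)-f(v)\Vert_p\leq C\rho(u,v)$ is precisely the input for the metric-space version of Kolmogorov's continuity criterion on $([0,1]^{d+1},\rho)$, a space of metric dimension $D$: the standard dyadic chaining along a sequence of $2^{-n}$-nets for $\rho$ --- a union bound over the $\le c\,2^{nD}$ net points at scale $2^{-n}$, Borel--Cantelli, and a telescoping estimate --- produces, for any $p>D$, a modification of $f|_{[0,1]^{d+1}}$ satisfying $|f(u)-f(v)|\leq C_\omega\,\rho(u,v)^{\gamma}$ for every $\gamma<1-D/p$.

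It remains to translate back and take limits. Since $0<\gamma\le 1$ and $x\mapsto x^{\gamma}$ is subadditive, $\rho(u,v)^{\gamma}\leq\sum_{i=0}^{d}|u_i-v_i|^{\gamma\alpha_i}$, so the modification is H\"older with indices $(\gamma\alpha_0,\dots,\gamma\alpha_d)$ on $[0,1]^{d+1}$, hence on $R_M$ after undoing the rescaling (which only affects the constant). Because the hypothesis is available for \emph{every} $p>2(d+1)$, we may send $p\to\infty$ so that $\gamma\uparrow 1$; the modifications obtained for different values of $p$ agree a.s., so a single modification on $R_M$ is H\"older with indices $(\beta\alpha_0,\dots,\beta\alpha_d)$ for all $\beta\in(0,1)$, and gluing over $M$ completes the argument. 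The one genuinely delicate point --- everything else being bookkeeping and a textbook chaining argument --- is the anisotropy: one must use the non-Euclidean metric $\rho$, compute its dimension $D=\sum_i\alpha_i^{-1}$ correctly, and notice that since $D$ can exceed $2(d+1)$, the full strength of the hypothesis (arbitrarily large $p$, not merely $p>2(d+1)$) is needed both to ensure $p>D$ and to drive $\gamma$ to $1$.
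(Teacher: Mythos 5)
Your argument is correct, but there is nothing in the paper to compare it against: the paper does not prove Theorem \ref{thm:Kolmogorov} at all, it simply imports it from Chen and Dalang \cite[Proposition 4.2]{CD13b} and applies it in Section \ref{sec:cty}. Your route --- localising to the boxes $[1/M,M]\times[-M,M]^d$, gluing modifications over $M$, and on each box running a dyadic chaining argument for the anisotropic metric $\rho(u,v)=\sum_{i=0}^{d}|u_i-v_i|^{\alpha_i}$ --- is a legitimate self-contained derivation, and you handle the two points that are genuinely specific to the anisotropic setting correctly: $\rho$ is a metric because each $\alpha_i\le 1$, its covering numbers are of order $\varepsilon^{-D}$ with $D=\sum_i\alpha_i^{-1}$, and the chaining then yields $\rho$-H\"older exponents $\gamma<1-D/p$, so that the availability of the moment bound for \emph{all} $p>2(d+1)$ (with $C$ allowed to depend on $p$) is exactly what lets you take $p>D$ and push $\gamma$, hence $\beta$, up to $1$; the subadditivity step $\rho(u,v)^{\gamma}\le\sum_i|u_i-v_i|^{\gamma\alpha_i}$ and the a.s.\ agreement of continuous modifications for different $p$ and different $M$ are also fine. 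The only part left at sketch level is the metric-space Kolmogorov/chaining theorem itself (union bound over nets, Borel--Cantelli, telescoping, and extension from the dyadic nets to a continuous modification); this is standard textbook material and the exponent bookkeeping you quote for it is right, so I regard the proposal as complete in substance, just with that ingredient cited rather than written out --- which puts it on the same footing as the paper, whose ``proof'' is the citation to \cite{CD13b}.
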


\subsection{Bounded Initial Data}

We now prove the H\"older continuity of the solution to (\ref{eq:MnBounded}) by verifying the assumptions of Kolmogorov's continuity criterion.
We first estimate the increments of $J_n(t,\y) = \frac{1}{n!} \int_{\R^n} g(\y^\p) Q_t(\y,\y^\p) \;\rd{\y^\p}$ where $g$ satisfies the bound $\sup_{\y\in\R^n} \V g(\y) \V_p \leq K_{p,g}$.

\begin{lemma} 
  Let $M>1$ and $p\geq 2$. 
  There exist constants $K_i := K_i(g,M,n,p)>0$, $i=1,2$ such that for all $t$, $t^\prime \in[1/M,M]$ and $\y$, $\y^\prime\in\R^n$
  \[
    \Vert J_n(t,\y) - J_n(t^\prime,\y) \Vert_p \leq K_1 |t-t^\prime|,
  \]
  and
  \[
    \Vert J_n(t,\y) - J_n(t,\y^\prime) \Vert_p \leq K_2 |\y-\y^\prime|.
  \]
  \label{lem:MnJtermCty}
\end{lemma}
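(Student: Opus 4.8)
The plan is to reduce both bounds, via Minkowski's integral inequality and the hypothesis $\sup_{y^\p\in\R^n}\Vert g(y^\p)\Vert_p \leq K_{p,g}$, to purely deterministic $L^1$-estimates on increments of the kernel $Q_t$. Writing $J_n(t,y) = \tfrac{1}{n!}\int_{\R^n} g(y^\p)\,Q_t(y,y^\p)\,\rd{y^\p}$ and applying Minkowski gives at once
\[
  \Vert J_n(t,y) - J_n(t^\p,y)\Vert_p \le \frac{K_{p,g}}{n!}\int_{\R^n}\bigl|Q_t(y,y^\p) - Q_{t^\p}(y,y^\p)\bigr|\,\rd{y^\p},
\]
and similarly $\Vert J_n(t,y) - J_n(t,y^\p)\Vert_p \le \tfrac{K_{p,g}}{n!}\int_{\R^n}|Q_t(y,z) - Q_t(y^\p,z)|\,\rd z$. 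So it suffices to show, for $t,t^\p\in[1/M,M]$: (a) $\sup_{y}\int_{\R^n}|Q_t(y,z) - Q_{t^\p}(y,z)|\,\rd z \le C\,|t-t^\p|$, and (b) $\sup_{y\ne y^\p}|y-y^\p|^{-1}\int_{\R^n}|Q_t(y,z) - Q_t(y^\p,z)|\,\rd z \le C$, with $C$ depending only on $n$ and $M$.

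For (b) I would use that $Q_t(x,z) = \Delta(z)^2\,p_n^*(t,x,z)/(\Delta(x)\Delta(z))$ is smooth in $x$ on all of $\R^n$ by \cite[Lemma 5.11]{BBO09}, parametrise the segment $r(\rho) = (1-\rho)y^\p + \rho y$, and write $Q_t(y,z) - Q_t(y^\p,z) = \int_0^1 \nabla_x Q_t(r(\rho),z)\cdot(y-y^\p)\,\rd{\rho}$; integrating in $z$ and $\rho$ (Tonelli) bounds the left side of (b) by $\sup_{x}\int_{\R^n}|\nabla_x Q_t(x,z)|\,\rd z$. The scaling identity (\ref{eq:QtScaling}) and a change of variables show this equals $t^{-1/2}\sup_x\int_{\R^n}|\nabla_x Q_1(x,z)|\,\rd z$, and the proof of Proposition \ref{prop:L1Derivative} already bounds $\int_{\R^n}|\partial Q_1/\partial x_j(x,z)|\,\rd z$ by $2^{n^2/2}$ uniformly in $x$ for each $j$; on $[1/M,M]$ the factor $t^{-1/2}\le\sqrt M$, which gives (b).

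For (a), since $r\mapsto Q_r(y,z)$ is $C^1$ on $(0,\infty)$, the fundamental theorem of calculus together with Tonelli reduces the claim to the uniform bound $\sup_{x\in\R^n}\int_{\R^n}|\partial_r Q_r(x,z)|\,\rd z \le C_n\,r^{-1}$ for $r>0$. I would obtain this from the Harish--Chandra representation $Q_r(x,y) = (2\pi)^{-n/2}c_n\,r^{-n^2/2}\int_{\mathcal U(n)}\Delta(y)^2\,e^{-T/2r}\,\rd U$ with $T := \mathrm{Tr}(D_y - UD_xU^\dagger)^2$ (differentiation under the $\rd U$-integral being legitimate by compactness of $\mathcal U(n)$, as in the justification of (\ref{eq:dQ1})): differentiating gives $|\partial_r Q_r(x,y)| \le C_n\,r^{-n^2/2-1}\int_{\mathcal U(n)}\Delta(y)^2\,e^{-T/4r}\,\rd U$ after using $e^{-u}|u-\tfrac{n^2}{2}| \le (\tfrac{2}{e}+\tfrac{n^2}{2})e^{-u/2}$, and then the change of variables $\mathrm{d}Y = Z_n\Delta(y)^2\,\mathrm{d}y\,\mathrm{d}U$ to the space of Hermitian matrices, together with rotation invariance of Lebesgue measure on $\mathcal H(n)$ and exactly as in the proof of Proposition \ref{prop:L1Derivative}, turns $\int_{\R^n}\int_{\mathcal U(n)}\Delta(y)^2 e^{-T/4r}\,\rd U\,\rd y$ into a constant multiple of the Gaussian integral $\int_{\mathcal H(n)} e^{-\mathrm{Tr}(Y-D_x)^2/4r}\,\mathrm{d}Y = c\,r^{n^2/2}$, which is independent of $x$; the powers of $r$ cancel to leave $C_n r^{-1}$. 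Integrating over $r$ between $t^\p$ and $t$ inside $[1/M,M]$ then yields (a) with $C \le C_n M$. The main obstacle is precisely this uniform-in-$x$ control of $\partial_r Q_r$: proceeding instead through the scaling identity produces the term $x\cdot\nabla_x Q_1(x,z) + z\cdot\nabla_z Q_1(x,z)$, whose two summands each grow linearly in $x$, and it is the exact unitary-integral formula combined with the Hermitian-matrix change of variables that makes the cancellation between them visible and delivers a bound free of $x$.
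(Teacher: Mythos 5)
Your argument is correct and follows essentially the same route as the paper: reduce via Minkowski's integral inequality to deterministic $L^1$-in-$z$ estimates on the increments of $Q_t$, and then control these by derivative bounds on $Q_t$ for $t\in[1/M,M]$ (the paper compresses this second step into the phrase ``$Q_t$ has bounded derivatives in both time and space and the result follows by a direct calculation''). Your execution of that calculation --- the uniform-in-$x$ bounds $\sup_x\int_{\R^n}|\nabla_x Q_t(x,z)|\,\rd{z}\le C_n t^{-1/2}$ and $\sup_x\int_{\R^n}|\partial_r Q_r(x,z)|\,\rd{z}\le C_n r^{-1}$ via the Harish--Chandra representation and the Hermitian-matrix change of variables already used in the proof of Proposition \ref{prop:L1Derivative} --- is a sound way to supply the details the paper leaves implicit.
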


\begin{proof}
  By the assumptions on $g$ and Minkowski's integral inequality, we have
  \begin{align*}
    \Vert J_n(t,\y) - J_n(t^\prime,\y^\p) \Vert_p 
    &\leq \frac{1}{n!} \bigg(\sup_{\z\in\mathbb{R}^n} \Vert g(\z) \Vert_p\bigg) \int_{\mathbb{R}^n} \big|Q_t(\y,\z) - Q_{t^\prime}(\y^\p,\z)\big| \;\rd{\z}.
  \end{align*}
  We first consider the time increment.
  By \eqref{eq:QtTimeDerivative}, there is a constant $C$ depending only on $n$ such that for all $\y\in\R^n$ and $t$, $t^\p\in[1/M,M]$ such that
  \begin{align*}
    \int_{\R^n} |Q_t(\y,\z)-Q_{t^\p}(\y,\z)| \;\rd{\z} 
    &= \int_{\R^n} \bigg| \int_t^{t^\p} \frac{\partial Q_r}{\partial r}(\y,\z) \;\rd{r}\bigg| \;\rd{\z} \\
    &\leq C \int_t^{t^\p} r^{-1} \int_{\R^n} Q_{2r}(\y,\z)\;\rd{\z}\rd{r} \\
    &\leq Cn!M|t^\p-t|.
  \end{align*}
  Similarly for the space increment we need to estimate
  \[
    \int_{\R^n} |Q_t(\y,\z)-Q_t(\y^\p,\z)| \;\rd{\z} = \int_{\R^n} \bigg| \int_0^1 \nabla Q_t\big(\mb{r}(\rho),\z\big)\cdot \mb{r}^\p(\rho) \;\rd{\rho}\bigg| \;\rd{\z},
  \]
  where $\mb{r}(\rho) = (1-\rho)\y + \rho \y^\p$, $\rho\in[0,1]$ is the straight line from $\y^\p$ to $\y$.
  By Lemma \ref{prop:L1Derivative}, $\frac{\partial Q_t}{\partial x_j}(\y,\z) \leq CQ_{2t}(\y,\z)$ and so the previous display is less than
  \begin{align*}
    C \sqrt{n} \int_0^1 |\mb{r}^\p(\rho)| \int_{\R^n} Q_{2t}(\mb{r}(\rho),\z) \;\rd{\z}\:\rd{\rho} \leq C^\p |\y^\p-\y|,
  \end{align*}
  for all $t\in[1/M,M]$ and $\y$, $\y^\p\in\R^n$.
\end{proof}

We now turn our attention to the stochastic integral term $I_n(t,y)$. 

\begin{proposition}
  Let $M>1$ and $p\geq 2$. 
  There exists a constant $K := K(g,M,n,p)$ such that for all $(t,\y)$ and $(u,\z)\in[0,M]\times\mathbb{R}^n$
  \[
    \Vert I_n(t,\y) - I_n(u,\z) \Vert_p \leq K \big( |t-u|^{1/4} + |\y-\z|^{1/2} \big).
  \]
  \label{prop:MnItermCty}
\end{proposition}

\begin{proof}
  We consider the spatial and temporal increment separately.
  By (\ref{eq:MnPthMoment}), there is a constant $C := C(g,M,n,p)$ such that 
  \[
    \sup_{(s,\y^\p)\in[0,M]\times\mathbb{R}^n} \Vert M_n^g(s,\y^\p) \Vert_p^2 \leq C.
  \]  
  Then by Lemma \ref{lem:integral} and Theorem \ref{thm:DysonKernelCty}(a)
  \begin{align*}
    \Vert & I_n(t,\y) - I_n(t,\z) \Vert_p^2 \\
    &\leq C A_n^2 c_p^2 \int_0^t \int_\mathbb{R} \bigg( \int_{\mathbb{R}^{n-1}} | Q_{t-s}(\y,\y^\prime) - Q_{t-s}(\z,\y^\prime) | \;\rd{\y_*^\prime} \bigg)^2 \rd{y_1^\prime} \rd{s} \\
    &\leq C_1 C A_n^2 c_p^2 |\y-\z|,
  \end{align*}
where once again we denote $1/(n-1)!$ by $A_n$.

  For the temporal increment we have (assuming without loss of generality that $0\leq u\leq t\leq M$) that
  \begin{align*}
    \Vert I_n(t,\y) - I_n(u,\y) \Vert_p^2 \leq 2\mathrm{I} + 2\mathrm{II},
  \end{align*}
  where by Theorem \ref{thm:DysonKernelCty}(b)
  \begin{align*}
    \mathrm{I} 
    &:= \bigg\Vert A_n \int_0^u \int_{\mathbb{R}^n} \big| Q_{t-s}(\y,\y^\prime) - Q_{u-s}(\y,\y^\prime) \big| M_n^g(s,\y^\prime) \;\rd{\y_*^\prime} \;\W{s}{y_1^\prime} \bigg\Vert_p^2 \\
    &\leq C A_n^2 c_p^2 \int_0^u \int_\mathbb{R} \bigg( \int_{\mathbb{R}^{n-1}} | Q_{t-s}(\y,\y^\prime) - Q_{u-s}(\y,\y^\prime) | \;\rd{\y_*^\prime} \bigg)^2 \rd{y_1^\prime} \rd{s} \\
    &\leq C_2 C A_n^2 c_p^2 |t-u|^{1/2},
  \end{align*}
  and
  \begin{align*}
    \mathrm{II}
    &:= \bigg\Vert A_n \int_u^t \int_{\mathbb{R}^n} Q_{t-s}(\y,\y^\prime) M_n^g(s,\y^\prime) \;\rd{\y_*^\prime} \;\W{s}{y_1^\prime} \bigg\Vert_p^2 \\
    &\leq C_3 C A_n^2 c_p^2 |t-u|^{1/2}.
  \end{align*}
\end{proof}

By the subadditivity of the function $x\mapsto |x|^\beta$, for $\beta\in(0,1]$ we have
\[
  |\y-\y^\prime|^\beta = \left( \sum_{i=1}^n |y_i-y_i^\prime|^2 \right)^{\beta/2} \leq \sum_{i=1}^n |y_i-y_i^\prime|^\beta.
\]
Lemma \ref{lem:MnJtermCty} and Proposition \ref{prop:MnItermCty} together shows that for all $M>1$ and $p\geq 2$, there is a constant $C := C(g,M,n,p)$ such that for all $(t,\y)$ and $(t^\prime,\y^\prime)$ in $[1/M,M]\times[-M,M]^n$,
\[
  \Vert M_n^g(t,\y) - M_n^g(t^\prime,\y^\prime) \Vert_p \leq C \left( |t-t^\prime|^{1/4} + \sum_{i=1}^n |y_i-y_i^\prime|^{1/2} \right).
\]
Taking $p$ large enough and applying Theorem \ref{thm:Kolmogorov} shows that $M_n^g$ has a version that is locally H\"older continuous on $(0,\infty)\times\mathbb{R}^n$ with indices up to $1/4$ in time and up to $1/2$ in space.

\subsection{Delta Initial Data}

We now turn our attention to $M_n(t,\x,\y)$.
Observe that in this case we cannot apply the method used in Proposition \ref{prop:MnItermCty} directly since the $p$th moments of $M_n(t,\x,\y)$ are not bounded uniformly in time, for instance if $\x=\y$ then 
\begin{align*}
  \V M_n(t,\x,\x) \V_2^2 
  &= \E\big[ J_n(t,\x,\x)^2 + 2J_n(t,\x,\x)I_n(t,\x,\x) + I_n(t,\x,\x)^2 \big] \\
  &= \E\big[ J_n(t,\x,\x)^2 \big] + \E\big[ I_n(t,\x,\x)^2 \big] \\
  &\geq J_n(t,\x,\x)^2,
\end{align*}
where
\[
  J_n(t,\x,\x) = \frac{(2\pi t)^{-n/2}}{\Delta(\x)^2} \Big(1 + \sum_{\substack{\sigma\in S_n \\ \sigma\neq \text{id}}} \text{sgn}(\sigma) \prod_{i=1}^n e^{-(x_i-x_{\sigma(i)})^2/2t} \Big),
\]
which converges to infinity as $t\downarrow 0$.
However, for any $t>0$ fixed we have by (\ref{eq:limMkPthMoment}) and Lemma \ref{lem:localTimeExpMoments} that there is a constant $C := C(n,p,t)$ such that
\begin{align*}
 \Vert M_n(t,\x,\y) \Vert_p^2 
 \leq 2\bigg(\frac{p_n^*(t,\x,\y)}{\Delta(\x)\Delta(\y)}\bigg)^2 \: \mathbb{E}_{\x,\y;t}^{X,Y} \Big[ \exp\Big( 2c_p^2 \sum_{i,j=1}^n L_t(X^i-Y^j) \Big)\Big]
        \leq C t^{-n^2}, 
\end{align*}
uniformly for $\x$, $\y\in\R^n$.
Thus, for all positive times, $M_n$ belongs to the class of initial data in Theorem \ref{thm:MnMain}(a). 
It is clear that at any given time we can restart the equation taking the current solution as the new initial data.
More precisely, let $\tau>0$ and consider the shifted white noise $\dot{W}^\tau(s,y) := \dot{W}(\tau+s,y)$.
Define $M_n^\tau(t,\x,\y) := M_n(\tau+t,\x,\y)$ then it is easy to check by using the semigroup property of $Q_t$ that $M_n^\tau$ satisfies the integral equation
\begin{align*}
  M_n^\tau(t,\x,\y) 
  &= \frac{1}{n!} \int_{\mathbb{R}^n} M_n(\tau,\x,\y^\prime) Q_t(\y,\y^\prime) \;\rd{\y^\p} \\
  &\qquad + \frac{1}{(n-1)!} \int_0^t\int_{\mathbb{R}^n} Q_{t-s}(\y,\y^\prime) M_n^\tau(s,\x,\y^\prime) \;\rd{\y_*^\prime} \;W^\tau(\rd{s},\rd{y_1^\prime}).
\end{align*}
In other words, $M_n^\tau$ is the solution to (\ref{eq:MnBounded}) driven by the shifted noise $\dot{W}^\tau$ with initial data $M_n^\tau(0,\x,\y) = M_n(\tau,\x,\y)$.
Let $M>1$ and $p\geq 2$ then Lemma \ref{lem:MnJtermCty} and Proposition \ref{prop:MnItermCty} applies to show that there is a constant $C := C(M,n,p,\tau)$ such that for all $t$, $t^\p\in[\tau,M]$ and $\y$, $\y^\p\in[-M,M]^n$ and $\x\in\R^n$
\begin{equation}
  \V M_n^\tau(t,\x,\y) - M_n^\tau(t^\p,\x,\y^\p) \V_p \leq C\big(|t-t^\p|^{1/4} + |\y-\y^\p|^{1/2}\big).
  \label{eq:MnForwardCty}
\end{equation}

\subsubsection{Continuity in the Initial Condition}\label{sec:ctyInitialData}

We study the continuity of $\x\mapsto M_n(t,\x,\y)$; in fact we show that $(t,\x,\y)\mapsto M_n(t,x,y)$ is jointly continuous.
Recall the chaos expansion of $M_n(t,\x,\y)$:
\begin{equation}
  M_n(t,\x,\y) = J_n(t,\x,\y) \bigg( 1 + \sum_{k=1}^\infty \int_{\Delta_k(t)} \int_{\mathbb{R}^k} R_k(\mathbf{s}, \mathbf{y}^\prime ; t,\x,\y)  \;\mW{k}{\mathbf{s}}{\mathbf{y}^\prime} \bigg),
  \label{eq:MnChaos}
\end{equation}
where for $0<s_1<\ldots<s_k<t$, $\y_1 = (y_1^1,y_1^2,\ldots,y_1^k)$,
\begin{multline*}
R_k  (\mathbf{s},\mathbf{\y_1} ; t,\x,\y)=  
 ( (n-1)!)^{-k}  \times \\ \int_{(\mathbb{R}^{n-1})^k} \frac{p_n^*(s_1,\x,\y^1) \prod_{i=2}^k p_n^*(s_{i}-s_{i-1},\y^{i-1},\y^{i}) p_n^*(t-s_k,\y^k,\y)}{p_n^*(t,\x,\y)} \;\prod_{i=1}^k\prod_{j=2}^n \rd{y_j^i}.
\end{multline*}
It is easy to see that $J_n(t,\x,\y) = J_n(t,\y,\x)$ and from the expression of $R_k$, one can see that for all $k\geq 1$
\begin{equation}
  R_k(\mb{s},\mb{z}; t,\x,\y) = R_k(t-\tilde{\mb{s}},\tilde{\mb{z}}; t,\y,\x),
  \label{eq:RkSymmetry}
\end{equation}
where $t-\tilde{\mb{s}} := (t-s_k,\ldots,t-s_1)$, $0<t-s_k<\ldots<t-s_1<t$ and $\tilde{\z} := (z_k,z_{k-1},\ldots,z_1)$.
Therefore, it is reasonable to think that each term in the series \eqref{eq:MnChaos} above is symmetric in $\x$ and $\y$ provided one can reverse time in the multiple stochastic integral.
This motivates the following proposition.

\begin{proposition}
  For all $n\geq 1$ and $\y\in\R^n$ the random fields $(M_n(t,\x,\y), (t,\x) \allowbreak \in (0,\infty)\times\mathbb{R}^n)$ and $(M_n(t,\y,\x); (t,\x)\in (0,\infty)\times\mathbb{R}^n)$ are equal in distribution.
  \label{prop:MnSymmetry}
\end{proposition}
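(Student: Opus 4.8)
The plan is to read off the equality in law from the chaos expansion (\ref{eq:MnChaos}) by using the invariance of white noise under time reversal, in the precise form of Lemma \ref{lem:timeReversal}, together with the two symmetries $J_n(t,x,y)=J_n(t,y,x)$ and (\ref{eq:RkSymmetry}). First I would prove the statement for a single fixed time $t$, i.e.\ that the random fields $\big(M_n(t,x,y)\big)_{x\in\mathbb{R}^n}$ and $\big(M_n(t,y,x)\big)_{x\in\mathbb{R}^n}$ have the same law, and then discuss how one would upgrade this to the full random field indexed by $(t,x)$.

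Fix $t>0$ and, for $x\in\mathbb{R}^n$, write $M_n(t,x,y)[W]$ to stress the dependence on the driving noise. For each $k\geq 1$ let $\bar R_k(\cdot\,;t,x,y)\in L^2_S([0,t]^k\times\mathbb{R}^k)$ denote the symmetrisation of the extension of $R_k(\cdot\,;t,x,y)$ by zero off $\Delta_k(t)$, so that the $k$-th term in (\ref{eq:MnChaos}) equals $\int_{[0,t]^k}\int_{\mathbb{R}^k}\bar R_k(\mathbf{s},\mathbf{y}';t,x,y)\,\mW{k}{\mathbf{s}}{\mathbf{y}'}$. Applying Lemma \ref{lem:timeReversal} with terminal time $t$ rewrites this term as $\int_{[0,t]^k}\int_{\mathbb{R}^k}\bar R_k(t-\mathbf{s},\mathbf{y}';t,x,y)\,\tilde{W}^{\otimes k}(\rd{\mathbf{s}},\rd{\mathbf{y}'})$, where $\tilde{W}$ is $W$ reversed about $t$. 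The content of (\ref{eq:RkSymmetry}) is exactly that reversing the times together with reversing the order of the labels $(s_j,y'_j)$ carries the correlation kernel for $(t,x,y)$ into the one for $(t,y,x)$; since a $k$-fold integral against $W^{\otimes k}$ of a function on all of $[0,t]^k\times\mathbb{R}^k$ is unchanged when the $k$ pairs of variables are permuted, this yields $\bar R_k(t-\mathbf{s},\mathbf{y}';t,x,y)=\bar R_k(\mathbf{s},\mathbf{y}';t,y,x)$, so the $k$-th chaos term of $M_n(t,x,y)$ against $W$ equals the $k$-th chaos term of $M_n(t,y,x)$ against $\tilde{W}$. Adding the trivial ($k=0$) term and using $J_n(t,x,y)=J_n(t,y,x)$ gives the pathwise identity $M_n(t,x,y)[W]=M_n(t,y,x)[\tilde{W}]$ for every $x$, with one and the same $\tilde{W}$. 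Both sides depend only on the restriction of the noise to $[0,t]\times\mathbb{R}$, and $\big(\tilde{W}_s(A)\big)_{s\leq t,\,A\in\mathscr{B}_b(\mathbb{R})}$ is again a space-time white noise with the same law as $\big(W_s(A)\big)_{s\leq t,\,A\in\mathscr{B}_b(\mathbb{R})}$; hence $\big(M_n(t,x,y)\big)_{x}\overset{d}{=}\big(M_n(t,y,x)\big)_{x}$, and likewise any finite family of values of $M_n$ at the common time $t$ enjoys the asserted symmetry.

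To pass to finitely many space-time points with distinct times $0<t_1<\dots<t_m$ — which is what the statement about the full random field requires — a single time reversal will not suffice, since the reversal time is tied to a fixed horizon; I would instead bring in the restart/Markov structure established for $M_n^\tau$ above, namely that for $t<t'$ the entire spatial profile $M_n(t',\cdot,\cdot)$ is a fixed measurable functional of the profile $M_n(t,\cdot,\cdot)$ and of $W$ restricted to $[t,t']\times\mathbb{R}$, which is independent of $\mathscr{F}_t$, so that $t\mapsto M_n(t,\cdot,\cdot)$ is Markov in an appropriate space of functions on $\mathbb{R}^n\times\mathbb{R}^n$. Feeding the single-time symmetry in at $t_1$ and propagating it forward, the remaining ingredient is a transposition-compatibility of this transition functional (the solution operator of (\ref{eq:MnBounded}), applied in the last spatial variable to each row of the profile), which one would again attack by a time-reversal argument on equation (\ref{eq:MnBounded}), now carrying along the Vandermonde weights in $Q_t$. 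I expect precisely this last point to be the main obstacle: the single-time step is essentially just bookkeeping built on Lemma \ref{lem:timeReversal} and (\ref{eq:RkSymmetry}), whereas checking that the restart operator behaves correctly under the transposition $(x,y)\mapsto(y,x)$, and organising the induction, is where the real work lies. I note that for the continuity results that follow, only the single-time statement in its joint-in-$x$ form is actually needed: a mixed increment $\Vert M_n(t,x,y)-M_n(t',x',y)\Vert_p$ splits into a temporal increment controlled directly by (\ref{eq:MnForwardCty}) and a spatial increment at the fixed time $t'$, which the single-time symmetry turns into an increment in the last variable of $M_n(t',y,\cdot)$, again controlled by (\ref{eq:MnForwardCty}).
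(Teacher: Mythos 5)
Your fixed-time argument is exactly the paper's proof: the paper fixes $(t,x,y)$, extends $R_k$ by zero off $\Delta_k(t)$ and symmetrises it, applies Lemma \ref{lem:timeReversal} with the noise reversed about $t$, and uses (\ref{eq:RkSymmetry}) together with $J_n(t,x,y)=J_n(t,y,x)$ to get the pathwise identity $M_n(t,x,y)[W]=M_n(t,y,x)[\tilde{W}]$, concluding because $\tilde{W}$ is again a space-time white noise. Where you diverge is in the second stage: the paper does no restart/Markov induction at all — it simply declares that ``the result follows'' from this pointwise identity, even though its reversed noise $\tilde{W}$ depends on $t$, which is exactly the subtlety you flag; so the extra machinery you sketch for distinct times $t_1<\dots<t_m$ is not present in the paper, and you have not missed any device the authors use. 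As you also correctly observe, the later applications of the proposition (the mixed-increment bound fed into (\ref{eq:MnForwardCty}) for joint continuity, and the positivity of $M_n(\tau,x,0)$ in the proof of Theorem \ref{thm:strongComparision}(b)) only require the single-time, joint-in-$x$ form, which your argument (like the paper's) establishes in full.
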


\begin{proof}
  Fix $k\geq 1$ and $(t,\x,\y)\in(0,\infty)\times\mathbb{R}^n\times\mathbb{R}^n$.
  Recall the time reversed white noise $\tilde{W}$ defined by $\tilde{W}([0,s]\times A) = \dot{W}([t-s,t]\times A)$ for $s\leq t$ and $A\in\mathscr{B}_b(\R)$.
  Extend $R_k(\mb{s},\mb{z}; t,\x,\y)$ to a function on $L^2([0,t]^k\times\mathbb{R}^k)$ by setting it to be zero for $\mb{s} \notin \Delta_k(t)$.
  Let $\tilde{R}_k$ be the symmetrisation of $R_k$ given by
  \[
    \tilde{R}_k(\mb{s},\mb{z}; t,\x,\y) = \frac{1}{k!} \sum_{\pi\in S_k} R_k(\pi\mb{s},\pi\z; t,\x,\y),
  \]
  where $\pi\mb{s} = (s_{\pi(1)},\ldots,s_{\pi(k)})$ and likewise for $\pi\z$.
  Clearly, we have $\tilde{R}_k(\tilde{\mb{s}},\tilde{\z}; t,\x,\y) = \tilde{R}_k(\mb{s},\mb{z}; t,\x,\y)$.
  Therefore by Lemma \ref{lem:timeReversal} and (\ref{eq:RkSymmetry}), (recall the definition of the multiple stochastic integral in Section \ref{sec:whiteNoise}) 
  \begin{align*}
    \int_{\Delta_k(t)} \int_{\mathbb{R}^k} R_k(\mb{s},\mb{z}; t,\x,\y)  \;\mW{k}{\mb{s}}{\mb{z}} 
    &= \int_{[0,t]^k} \int_{\mathbb{R}^k} \tilde{R}_k(\mb{s},\mb{z}; t,\x,\y) \;\mW{k}{\mb{s}}{\mb{z}} \\
    &= \int_{[0,t]^k} \int_{\mathbb{R}^k} \tilde{R}_k(t-\mb{s},\mb{z}; t,\x,\y) \;\tilde{W}^{\otimes k}(\rd{\mb{s}},\rd{\mb{z}}) \\
    &= \int_{[0,t]^k} \int_{\mathbb{R}^k} \tilde{R}_k(\tilde{\mb{s}},\tilde{\mb{z}}; t,\y,\x) \;\tilde{W}^{\otimes k}(\rd{\mb{s}},\rd{\mb{z}}) \\
    &=\int_{\Delta_k(t)} \int_{\mathbb{R}^k} R_k(\mb{s},\mb{z}; t,\y,\x) \;\tilde{W}^{\otimes k}(\rd{\mb{s}},\rd{\mb{z}}).
  \end{align*}
  Thus, applying the above to each term of the sum in (\ref{eq:MnChaos}) we see that 
  \begin{align*}
    M_n(t,\x,\y) 
    &=J_n(t,\y,\x)\bigg(1 + \sum_{k=1}^\infty \int_{\Delta_k(t)} \int_{\R^k} R_k(\mb{s},\mb{z};t,\y,\x) \;\tilde{W}^{\otimes k}(\db{s},\db{z}) \bigg) \\
    &\stackrel{(d)}{=}  M_n(t,\y,\x),
  \end{align*}
  for all $(t,\x,\y)\in(0,\infty)\times\R^n\times\R^n$ and the result follows.
 \end{proof}

Finally, we return to proving the joint continuity of the solution to (\ref{eq:MnDelta}).
We bound $\Vert M_n(t,\x,\y) - M_n(t^\prime,\x^\prime,\y^\prime) \Vert_p^2$ by considering the increments in each variables separately.
Since $M_n(t,\x,\y) = M_n^\tau(t-\tau,\x,\y)$ for $t\geq 2\tau$, we have by Proposition \ref{prop:MnSymmetry} and (\ref{eq:MnForwardCty}) that for all $M>1$ and $p\geq 2$ there is a constant $C := C(M,n,p,\tau)$ such that for all $(t,\x,\y)$ and $(t^\p,\x^\p,\y^\p) \in [2\tau,M]\times[-M,M]^n\times[-M,M]^n$
\begin{align*}
  \Vert & M_n(t,\x,\y) - M_n(t^\p,\x^\p,\y^\p) \Vert_p \\
  &\quad\leq \V M_n^\tau(t-\tau,\x,\y) - M_n^\tau(t^\p-\tau,\x,\y^\p) \V_p + \V M_n^\tau(t^\p-\tau,\y^\p,\x) - M_n^\tau(t^\p-\tau,\y^\p,\x^\p) \V_p \\
  &\quad\leq C\big( |t-t^\p|^{1/4} + |\x-\x^\p|^{1/2} + |\y-\y^\p|^{1/2} \big).
\end{align*}
Since $\tau>0$ is arbitrary, we can take $2\tau = 1/M$ and thus we have shown that there exists a constant $\tilde{C} = \tilde{C}(M,n,p)$ such that for all $(t,\x,\y)$ and $(t^\prime,\x^\prime,\y^\prime) \in [1/M,M]\times[-M,M]^{2n}$ the above inequality holds with $\tilde{C}$ in place of $C$.
Finally, using the subadditivity of $\x\mapsto |\x|^\beta$ for $\beta\in(0,1]$ and applying Theorem \ref{thm:Kolmogorov} proves the existence of a H\"older continuous version.
This concludes the entire proof of Theorem \ref{thm:MnMain}.

\section{Strict Positivity}\label{sec:positivity}

\subsection{A Weak Comparision Principle}

Recall that $K_n(t,\x,\y)$ can be expressed as $K_n(t,\x,\y) = \det[u(t,x_i,y_j)]_{i,j=1}^n$ where $u(t,x,y)$ is the solution to (\ref{eq:SHEDeltaX}) with initial data $\delta_x$.
Bertini--Cancrini \cite{BC95} proved that $u(t,x,y)$ is the limit in $L^p(\Omega)$ for all $p\geq 2$ of $u^\varepsilon(t,x,y)$ as $\varepsilon\to\infty$, where $u^\varepsilon(t,x,y)$ is the solution to the stochastic heat equation subject to a mollified white noise $W^\varepsilon$ in place of the space-time white noise.
Its solution is given by the following Feymann--Kac formula which is well defined for the noise $W^\varepsilon$:
\[
  u^\varepsilon(t,x,y) = p_t(x-y) \mathbb{E}_{x,y;t}^b \bigg[\mathcal{E}\mathrm{xp}\Big(\int_0^t W^\varepsilon(s,b_s) \;\rd{s} \Big)\bigg],
\]
where the expectation is with respect to a Brownian bridge $b$ starting from $x$ at time 0 and ending in $y$ at time $t$.
By the above Feymann--Kac formula it is then clear that for all $(t,x,y)\in(0,\infty)\times\mathbb{R}\times\mathbb{R}$, with probability 1, $u(t,x,y)\geq 0$.
Using this and the determinant formula for $K_n$, the authors in \cite[Proposition 5.5]{OW11} proved by a path switching argument that $K_n(t,\x,\y) \geq 0$ almost surely, for all $(t,\x,\y)\in (0,\infty)\times W_n\times W_n$.

In fact, a stronger result is true since the above implies that $K_n(t,\x,\y) \geq 0$ for all rational points $(t,\x,\y)$ almost surely.
It is well known that $(t,x,y)\mapsto u(t,x,y)$ has a jointly continuous version and hence the same is true for $K_n$ as it is just a sum of products of the $u$'s.
Therefore, by continuity
\begin{equation*}
  \mathbb{P}[ K_n(t,\x,\y) \geq 0 \text{ for all } t>0 \text{ and } \x,\y\in W_n] = 1.
  \label{eq:KnDeltaPositive}
\end{equation*}
Since the Vandermonde determinant $\Delta(\x) = \prod_{1\leq i<j\leq n} (x_i-x_j)$ is non-negative for $\x\in W_n$, we see that $M_n(t,\x,\y) = K_n(t,\x,\y)/\big(\Delta(\x)\Delta(\y)\big) \geq 0$ for all $x$, $y\in W_n^\circ$ almost surely.
By the continuity of $M_n$ proved in the previous section, this non-negativity extends to the boundary of the Weyl chamber and by symmetry to the whole of $\R^n$.
That is,
\begin{equation}
  \mathbb{P}[ M_n(t,\x,\y) \geq 0 \text{ for all } t>0 \text{ and } \x,\y\in \R^n] = 1.
  \label{eq:MnDeltaPositive}
\end{equation}
The next lemma extends this to solutions $M_n^g(t,\y)$ of equation (\ref{eq:MnBounded}) with non-negative initial data $g$ and in fact by the linearity of the equation this is equivalent to a weak comparison principle.

\begin{lemma}[Weak comparison principle]
  Let $M_n^1(t,\y)$ and $M_n^2(t,\y)$ be the solution to \eqref{eq:MnBounded} with initial data $g_1$ and $g_2$ respectively.
  Assume in addition to the assumptions of Theorem \ref{thm:MnMain}(a) that $g_1\geq g_2$, then 
  \[
    \mathbb{P}[M_n^1(t,\y) \geq M_n^2(t,\y) \text{ for all } t>0 \text{ and } \y\in\mathbb{R}^n] = 1.
  \]
  \label{lem:weakComparison}
\end{lemma}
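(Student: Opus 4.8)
The plan is to reduce the statement to the already-established non-negativity (\ref{eq:MnDeltaPositive}) of the delta field $M_n(t,x,y)$, by representing $M_n^g$ as a superposition of delta solutions. By the linearity of (\ref{eq:MnBounded}) in the pair (initial data, solution), the difference $v:=M_n^1-M_n^2$ is the solution to (\ref{eq:MnBounded}) with initial data $g:=g_1-g_2$, which is symmetric, $\mathscr{F}_0$-measurable, a.s.\ non-negative, and has $\sup_{y\in\mathbb{R}^n}\|g(y)\|_p<\infty$ for every $p\ge2$; hence it suffices to show that for such a $g$ one has $\mathbb{P}[M_n^g(t,y)\ge0\text{ for all }t>0,\ y\in\mathbb{R}^n]=1$. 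The crux is the identity
\[
  M_n^g(t,y)=\frac{1}{n!}\int_{\mathbb{R}^n}g(x)\,\Delta(x)^2\,M_n(t,x,y)\;\rd{x}\qquad\text{a.s., for each fixed }(t,y),
\]
where $M_n(t,x,y)$ is the chaos field (\ref{eq:MnChaos1}) extended by symmetry and $g$, $\Delta(\cdot)^2$ and $M_n(t,\cdot,y)$ are all permutation symmetric.

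To establish the identity I would proceed in three steps. (i) The right-hand side is a.s.\ absolutely convergent and, as a field in $(t,y)$, lies in the uniqueness class of Theorem \ref{thm:MnMain}(a): since $g$ is independent of the white noise, $\|g(x)\Delta(x)^2M_n(t,x,y)\|_p=\|g(x)\|_p\Delta(x)^2\|M_n(t,x,y)\|_p$, and combining the moment representation (\ref{eq:MnPthMomentDeltaLocalTime}) with the elementary identity $\Delta(x)^2\,p_n^*(t,x,y)/\big(\Delta(x)\Delta(y)\big)=Q_t(y,x)$ yields $\Delta(x)^2\|M_n(t,x,y)\|_p\le\sqrt{2}\,Q_t(y,x)\big(\mathbb{E}_{x,y;t}^{X,Y}[\exp(2c_p^2\sum_{i,j}L_t(X^i-Y^j))]\big)^{1/2}$; integrating in $x$ and using $\int_{\mathbb{R}^n}Q_t(y,x)\,\rd{x}=n!$ gives $\|\text{RHS}(t,y)\|_p\le C(t)\sup_{z\in\mathbb{R}^n}\|g(z)\|_p$ with $C(t):=\sqrt{2}\big(\sup_{x,y}\mathbb{E}_{x,y;t}^{X,Y}[\exp(2c_p^2\sum_{i,j}L_t(X^i-Y^j))]\big)^{1/2}$, bounded for $t\in[0,T]$. (ii) Substituting the mild equation (\ref{eq:MnDelta}) for $M_n(t,x,y)$ into the right-hand side and applying the stochastic Fubini theorem \cite[Theorem 5.30]{Kh09} (as in the proof of Theorem \ref{thm:MnMain}(b)) shows that the right-hand side satisfies (\ref{eq:MnBounded}): its deterministic part equals $\frac{1}{n!}\int_{\mathbb{R}^n}g(x)Q_t(y,x)\,\rd{x}=J_n(t,y)$ by $Q_t(y,x)=\tfrac{\Delta(x)}{\Delta(y)}p_n^*(t,x,y)$, and the stochastic part becomes $A_n\int_0^t\int_{\mathbb{R}^n}Q_{t-s}(y,y^\prime)\,\text{RHS}(s,y^\prime)\,\rd{y_*^\prime}\,\W{s}{y_1^\prime}$. (iii) By the uniqueness in Theorem \ref{thm:MnMain}(a), the right-hand side coincides with $M_n^g$.

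Granting the identity, let $\Omega_0$ be the full-probability event on which $g\ge0$ and $M_n(t,x,y)\ge0$ for all $(t,x,y)$ (the latter being (\ref{eq:MnDeltaPositive})). On $\Omega_0$ the integrand $g(x)\Delta(x)^2M_n(t,x,y)$ is non-negative for every $x$ and every $(t,y)$, so the right-hand side is $\ge0$ at every $(t,y)$ where the identity holds; since the identity holds a.s.\ for each fixed $(t,y)$, it holds a.s.\ simultaneously on a countable dense subset of $(0,\infty)\times\mathbb{R}^n$, and the continuous version of $M_n^g$ from Theorem \ref{thm:MnMain}(a) upgrades this to $M_n^g(t,y)\ge0$ for all $(t,y)$, a.s. Taking $g=g_1-g_2$, $M_n^g=M_n^1-M_n^2$ finishes the proof.

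The main obstacle is step (i), which rests on the quantitative bound $\sup_{x,y\in W_n}\mathbb{E}_{x,y;t}^{X,Y}\big[\exp\big(2c_p^2\sum_{i,j}L_t(X^i-Y^j)\big)\big]<\infty$ for each fixed $t>0$, with $C(t)$ remaining bounded as $t\downarrow0$; this is what legitimises both the a.s.\ convergence of the $\rd{x}$-integral and the stochastic Fubini exchange, and it is obtained from the intersection-local-time estimates underlying Lemma \ref{lem:localTimeExpMoments}. Should one wish to avoid a global-in-$x$ estimate, the identity — and hence $M_n^g\ge0$ — can first be proved for compactly supported $g$, where the needed integrability is immediate from (\ref{eq:MnPthMomentDelta}), and then extended to general $g\ge0$ by passing to the monotone limit $g_R:=g\,\1_{[-R,R]^n}\uparrow g$: the bound (\ref{eq:MnPthMoment}) is uniform in $R$ since $\sup_{z\in\mathbb{R}^n}\|g_R(z)\|_p\le\sup_{z\in\mathbb{R}^n}\|g(z)\|_p$, and Lemmata \ref{lem:integral} and \ref{lem:integralofKSquared} let one pass the limit through (\ref{eq:MnBounded}) and conclude via uniqueness.
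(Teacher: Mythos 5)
Your proposal is correct and follows essentially the same route as the paper's proof: reduce by linearity to non-negative $g$, represent the solution as $\frac{1}{n!}\int_{\mathbb{R}^n} g(x)\Delta(x)^2 M_n(t,x,y)\,\rd{x}$, identify it with $M_n^g$ via the uniqueness of Theorem \ref{thm:MnMain}(a), and conclude from (\ref{eq:MnDeltaPositive}) together with continuity. The only difference is that you spell out (and honestly flag the local-time moment input for) the membership in the uniqueness class and the stochastic Fubini exchange, which the paper compresses into ``a direct calculation.''
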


\begin{proof}
  By linearity of the equation \eqref{eq:MnBounded}, it suffices to prove the lemma in the case $g_1 = g$ and $g_2 = 0$.
  By Theorem \ref{thm:MnMain}(b) we have that
  \[
    M_n^g(t,\y) = \frac{1}{n!} \int_{\R^n} g(\x)M_n(t,\x,\y) \Delta(\x)^2 \;\rd{\x}.
  \] 
  From \eqref{eq:MnDeltaPositive} and the non-negativity of $g$ we see that for all $(t,\y) \in[0,\infty)\times\R^n$, $M_n^g(t,\y) \geq 0$ almost surely which combined with the continuity of $(t,\y)\mapsto M_n^g(t,\y)$ gives the conclusion of the lemma.
\end{proof}

\subsection{A Strong Comparison Principle}

We now prove a strong comparision principle of which Theorem \ref{thm:strictPositivity} is an easy corollary.
\begin{theorem}[Strong comparision principle]
  \hfill
  \begin{itemize}
    \item[(a)] Let $M_n^{1}(t,\y)$ and $M_n^{2}(t,\y)$ be two solutions to (\ref{eq:MnBounded}) with initial data $g_1$ and $g_2$ respectively where $g_1$ and $g_2$ are as in Theorem \ref{thm:MnMain}(a) and are also continuous.
  If furthermore $g_1\geq g_2$ and $g_1(\y) > g_2(\y)$ for some $\y\in\mathbb{R}^n$ almost surely, then
  \[
    \mathbb{P}[ M_n^{1}(t,\y) > M_n^{2}(t,\y) \text{ for all } t>0 \text{ and } \y\in\mathbb{R}^n ] = 1.
  \]
\item[(b)] Let $M_n(t,\x,\y)$ be a solution to (\ref{eq:MnDelta}), then
  \[
    \mathbb{P}[ M_n(t,\x,\y) > 0 \text{ for all } t>0 \text{ and } \x,\y\in\R^n] = 1.
  \]
  \end{itemize}
  \label{thm:strongComparision}
\end{theorem}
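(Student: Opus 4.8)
The plan is to adapt the approach of Mueller \cite{Mu91} and Chen--Kim \cite{CK14}: reduce everything to a single solution of (\ref{eq:MnBounded}) with nonnegative initial data, prove a large deviation bound for its stochastic integral term, and then propagate positivity forward in time by an iteration that, on each short time step, compares the solution with its strictly positive deterministic part. For part (a), linearity of (\ref{eq:MnBounded}) in the initial data lets me take $g_2\equiv 0$: with $g:=g_1-g_2\geq 0$ and $u:=M_n^{g_1}-M_n^{g_2}=M_n^{g}$, Lemma \ref{lem:weakComparison} already gives $u\geq 0$ everywhere a.s., so it remains to show that $u(t,y)>0$ for all $t>0$ and $y\in\R^n$, a.s., whenever $g\geq 0$ is not a.s.\ identically zero. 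The crucial deterministic observation is that $J_n(t,y)=\frac1{n!}\int_{\R^n}g(y^\prime)Q_t(y,y^\prime)\,\rd{y^\prime}$ is strictly positive on all of $\R^n$ for every $t>0$: by the Harish--Chandra formula (\ref{eq:HCIZ}), $Q_t(y,\cdot)>0$ Lebesgue-almost everywhere (it vanishes only on $\{\Delta(y^\prime)=0\}$), so $J_n(t,\cdot)$ is continuous, strictly positive, hence bounded below by a positive constant on every compact set.

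The main analytic ingredient is a large deviation estimate for the stochastic term
\[
  I_n(t,y)=A_n\int_0^t\int_{\R^n}Q_{t-s}(y,y^\prime)M_n(s,y^\prime)\,\rd{y_*^\prime}\,\W{s}{y_1^\prime}.
\]
Combining Lemma \ref{lem:integral}, Lemma \ref{lem:integralofKSquared} and $c_p\leq 2\sqrt p$ with the moment bound (\ref{eq:MnPthMoment}) on a short time interval --- where it reads $\V M_n(t,y)\V_p^2\leq Ce^{Cp^2 t}$ --- and then interpolating with the spatial and temporal modulus-of-continuity estimates of Section \ref{sec:cty} so as to gain decay in the step length $h$, one expects a bound of the form: for every $M\geq 1$, $\lambda>0$ and integer $p\geq 2$ there is $C=C(M,\lambda,p,n)$ with
\[
  \mathbb{P}\Big[\sup_{(s,y)\in[0,h]\times[-M,M]^n}|I_n(s,y)|>\lambda\Big]\leq C\,h^{p/4},\qquad 0<h\leq 1.
\]
Establishing this estimate --- squeezing enough $h$-decay out of the moment and continuity bounds, uniformly over the compact spatial box --- is, I expect, the principal technical obstacle.

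With these two facts in hand, fix $T>1$, $M\geq 1$ and partition $[0,T]$ into steps of length $h\leq 1/T$. On the first step, $u(h,\cdot)=J_n(h,\cdot)+I_n(h,\cdot)$, and since $\inf_{[-M,M]^n}J_n(h,\cdot)>0$ (conditionally on $\mathscr{F}_0$), the estimate above gives $u(h,\cdot)\geq a_1\,\1_{[-M,M]^n}$ for some $a_1>0$ off an event of controlled probability. Inductively, given $u(kh,\cdot)\geq a_k\,\1_{[-M,M]^n}$, the weak comparison principle (Lemma \ref{lem:weakComparison}) and linearity give $u(kh+s,\cdot)\geq \tilde u_k(s,\cdot)$ for $0\leq s\leq h$, where $\tilde u_k$ solves (\ref{eq:MnBounded}) started from $a_k\,\1_{[-M,M]^n}$; its deterministic part stays $\geq c\,a_k$ on $[0,h]\times[-M,M]^n$ for some $c=c(h,M)>0$ (again because $Q_s>0$ a.e.), so by the estimate above $u>0$ on $[kh,(k+1)h]\times[-M,M]^n$ and $u((k+1)h,\cdot)\geq (c a_k/2)\,\1_{[-M,M]^n}$, off an event of probability $\leq C_k h^{p/4}$. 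Summing over the $O(T/h)$ steps the exceptional probability is $O(h^{p/4-1})$, which tends to $0$ as $h\downarrow 0$ with $p$ fixed large; since $\{u>0\text{ on }[1/T,T]\times[-M,M]^n\}$ does not depend on $h$, it then has probability $1$, and letting $T,M\to\infty$ proves (a). (The one delicate point is the first step, where the fluctuation $I_n(h,\cdot)$ need not be negligible compared with $\inf J_n(h,\cdot)$ if $g$ degenerates on part of the box: there one restarts (\ref{eq:MnBounded}) from a rational time at which $u$ is already strictly positive and uses the Markov property of (\ref{eq:MnBounded}) under restarting, together with a zero--one argument, to pass from ``positive probability'' to ``probability one''.)

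Part (b) is obtained the same way. Since $J_n(t,x,y)=p_n^*(t,x,y)/(\Delta(x)\Delta(y))=Q_t(x,y)/\Delta(y)^2$ is, by (\ref{eq:HCIZ}), strictly positive for \emph{all} $x,y\in\R^n$ and $t>0$, and $\sup_{x,y}\V M_n(t,x,y)\V_p^2\leq C_{n,p}t^{-n^2}$ by (\ref{eq:MnPthMomentDelta}), the iteration above applies to $(t,y)\mapsto M_n(t,x,y)$ once $t$ is bounded away from $0$ --- with $x$ treated as an extra parameter in a compact box, on which $J_n(t,x,y)$ is still bounded below, and with (\ref{eq:MnDeltaPositive}) replacing the weak comparison principle; the restart near $t=0$ is legitimate because $M_n(\tau,x,\cdot)$ is a.s.\ not identically zero, which one checks by applying part (a) to deterministic indicator initial data and using the representation $M_n^{g}(t,y)=\frac1{n!}\int g(x)M_n(t,x,y)\Delta(x)^2\,\rd{x}$ from the proof of Lemma \ref{lem:weakComparison} together with Proposition \ref{prop:MnSymmetry}. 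Finally, Theorem \ref{thm:strictPositivity} is the special case $g_2\equiv 0$ of part (a), respectively part (b).
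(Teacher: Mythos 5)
There is a genuine gap, and it sits exactly at the quantitative heart of the argument. After your reduction (legitimate, and the same as the paper's) to initial data $g=\1_{(-d,d)^n}$ with $d$ small, your scheme tries to produce, in a \emph{single} step of length $h$, a lower bound $u(h,\cdot)\geq a_1\1_{[-M,M]^n}$ on the \emph{whole} box by dominating the noise term by $\inf_{[-M,M]^n}J_n(h,\cdot)$. But when $g$ vanishes outside a small ball, that infimum is of order $e^{-c(M-d)^2/h}$ (Gaussian decay of $Q_h$ at distance of order $M$), i.e.\ exponentially small in $1/h$, while your large deviation estimate only gives $\mathbb{P}[\sup_{[0,h]\times[-M,M]^n}|I_n|>\lambda]\leq C(\lambda)h^{p/4}$ with $C(\lambda)\sim\lambda^{-p}$ from Chebyshev; with $\lambda=\lambda(h)\sim e^{-c/h}$ the bound is useless. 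The same defect recurs in your union bound: even granting the first step, your thresholds satisfy $a_{k+1}=ca_k/2$, so over the $O(T/h)$ steps they decay geometrically in $k$ up to $k\sim T/h$, the per-step constants blow up like $a_k^{-p}\sim e^{\kappa pk}$, and the total exceptional probability is \emph{not} $O(h^{p/4-1})$ --- it diverges as $h\downarrow 0$. Your proposed remedy for the ``delicate'' first step (restart at a rational time where $u$ is ``already strictly positive'' plus a zero--one argument) is circular: strict positivity of $u$ at a positive time, uniformly over the box, is precisely what is to be proven, and positivity \emph{somewhere} does not yield a lower bound of the size needed to beat the noise on $[-M,M]^n$ after a short time.

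The paper's proof avoids both problems by Mueller's expanding-box mechanism, which is the missing idea here. Time $[0,t]$ is cut into $m$ steps, and at each step positivity is propagated only to a box enlarged by $M/m$: Lemma \ref{lem:positiveLemma1} (via Weyl's eigenvalue inequality and a GUE probability) shows that the deterministic part of the solution started from $\1_{(-h,h)^n}$ is bounded below on the slightly larger box by a \emph{fixed} constant $\beta$, uniformly in $m$ and in $h\leq 2M$; and because the equation (\ref{eq:MnBounded}) is linear, dividing by $\gamma^{k-1}$ resets each step to indicator initial data, so the per-step failure probability is a single $\delta(m)$ (Lemma \ref{lem:positiveInductionStep}, proved by Chebyshev plus the Kolmogorov modulus bound, much as you propose) with $(1-\delta(m))^m\to 1$. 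The lower bound $\gamma^m$ on the final box $(-d-M,d+M)^n$ is tiny but positive, and the event of positivity on $(0,t]\times[-M,M]^n$ does not depend on $m$, so letting $m\to\infty$ gives probability one. If you replace your fixed-box induction by this expanding-box/renormalization scheme, the rest of your outline (weak comparison, restart with shifted noise for general $g$ at a small time $\tau$, symmetry via Proposition \ref{prop:MnSymmetry} and the moment bound (\ref{eq:MnPthMomentDelta}) for part (b) --- where you also need the paper's random sets $B_N$ and a countable union to get the statement simultaneously for all $x$) goes through essentially as in the paper.
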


We begin with a lemma which provides a lower bound for the deterministic term $J_n(t,\y)$ in (\ref{eq:MnBounded}). Recall that Dyson Brownian motion describes the eigenvalues of a Hermitian Brownian motion; the eigenvalues of a GUE matrix  have  the law of the Dyson Brownian motion at time $1$ when it is started from the origin, given by \eqref{eq:QtBoundary} with $a=0$ and $t=1$. 

\begin{lemma}
    Let $\beta := \frac{1}{2} \min_{k=0,1,\ldots,n} \mathbb{P}_{\mathrm{GUE}}[\lambda_{min}^{(n-k)} \geq 1] \mathbb{P}_{\mathrm{GUE}}[\lambda_{max}^{(k)} \leq -1]$ with the interpretation that if $k$ or $n-k$ is equal to 0 then the corresponding probability is equal to 1 and where $\lambda_{min}^{(k)}$, $\lambda_{max}^{(k)}$ denotes the smallest and largest eigenvalue of a $k\times k$ GUE matrix respectively.
  For all $h>h_0>0$, $t>0$, $M>0$, there exists an $m_0 := m_0(h_0,M,n,t)$ such that for all $m \geq m_0$, all $s \in [t/2m, t/m]$ and $\x\in W_n$,
\[
  Q(h,s,\x) := \int_{W_n} Q_s(\x,\y) 1_{(-h,h)^n}(\y) \;\rd{\y} \geq \beta 1_{(-h-M/m, h+M/m)^n}(\x).
\]
  \label{lem:positiveLemma1}
\end{lemma}

\begin{proof}
Fix $h>0$ and an enlargement parameter $\sigma$. 
We first bound $Q(h,s,\x)$ from below for small times $s$ and $\x = (x_1,\ldots,x_n)$ satisfying $h + \sigma \sqrt{ s } \geq x_1 \geq \ldots \geq x_n \geq -h - \sigma \sqrt{s}$.
Set $\delta= 2h/(n+2)$, and consider $K\geq 0$ to be chosen later but satisfying $ 2 K\sqrt{s} \leq  \delta$.
Certainly,
\begin{equation*}
  Q(h, s, \x)\geq  \int_{ \substack{y_1> \ldots >y_n \\ \max_i |y_i-x_i| \leq K\sqrt{s}} } Q_s(\x,\y) 1_{(-h,h)^n}(\y) \;\rd{\y},
\end{equation*}
Now for $k \in \{1, \ldots, n-1\}$ and indeed trivially for $k=0$ or $k=n$ also, we may write by means of the Laplace expansion of determinants 
\begin{equation}
  Q_s(\x,\y) = \sum_{\substack{S\subseteq \{1,2,\ldots,n\} \\ |S|=k}}  \text{sgn}(S) \frac{\Delta_{S}(\y)}{\Delta_{[1,k]}(\x)}
  Q_s\bigl(\x^{[1,k]}, \y^S\bigr)
  Q_s\bigl(\x^{[k+1,n]}, \y^{S^c}\bigr),
  \label{eq:Laplace}
\end{equation}
where
\begin{equation*}
\frac{\Delta_{S}(\y)}{\Delta_{[1,k]}(\x)}= \frac{\prod_{i\in S, j\in S^c} (y_i-y_j)}{ \prod_{i\in [1,k], j\in [k+1,n]}(x_i-x_j)},
\end{equation*}
$\textrm{sgn}(S) = (-1)^{(\sum_{i\in S} i + \sum_{j\in[1,k]} j )}$, $\y^S = (y_1^S,\ldots,y_k^S) = (y_i)_{i\in S}$ and similarly for $\x^{[1,k]}$, $\x^{[k,n+1]}$ and $\y^{S^c}$ where $[1,k] = \{1,2,\ldots,k\}$ and $S^c$ is the complement of $S$.

If we set $x_0 = h + \sigma\sqrt{s}$ and $x_{n+1}=  -h - \sigma \sqrt{s}$ then there must exist a $k\in\{0, 1, \ldots,n  \}$ with $x_{k}-x_{k+1} >\delta$.
We first consider the case when $k\notin\{0, n\}$.
With such a choice of $k$, we obtain from the Laplace expansion,
\begin{equation*}
  Q(h, s, \x) \geq  Q(h,s,K,\x) +\sum_{\substack{S\subseteq \{1,2,\ldots,n\} \\ |S|=k, S \neq [1,k]}} 
R(h,s,K,\x,S)
\end{equation*}
where 
\begin{multline*}
  Q(h,s,K,\x) = \\
  \int_{ \substack{y_1> \ldots >y_n \\ \max_i |y_i-x_i| \leq K\sqrt{s}} }  \frac{\Delta_{[1,k]}(\y)}{\Delta_{[1,k]}(\x)} 
  Q_s\bigl( \x^{[1,k]}, \y^{[1,k]}\bigr) Q_s\bigl( \x^{[k+1,n]}, \y^{[k+1,n]}\bigr)
  1_{(-h,h)^n}(\y) \;\rd{\y}
\end{multline*}
which corresponds to the term in the sum \eqref{eq:Laplace} with $S = \{1,2,\ldots,k\}$ and $R(h,s,K,\x,S)$ are the analogous terms from taking the other possibilities for $S$.

Let us bound each $|R(h,s,K,\x,S)|$ from above. 
We have, assuming that $\sigma \sqrt{s} \leq \delta$, 
\begin{equation*}
  \left|\frac{\prod_{i\in S, j\in S^c} (y_i-y_j)}{ \prod_{i\in [1,k], j\in [k+1,n]}(x_i-x_j)} \right| \leq \left( \frac{2(h+\sigma\sqrt{s})}{\delta} \right)^{k(n-k)} \leq (n+4)^{k(n-k)}=: C(k,n).
\end{equation*}
Enlarging the domain of integration gives
\begin{align*}
  |R(h,s,K,\x, & S)| \\
  &\leq C(k,n) \int_{ \substack{y_1> \ldots >y_n \\ \max_i |y_i-x_i| \leq K\sqrt{s}} }  Q_s\bigl( \x^{[1,k]}, \y^{S}\bigr) Q_s\bigl( \x^{[k+1,n]}, \y^{S^c}\bigr)
 1_{(-h,h)^n} (\y) \;\rd{\y} \\
  &\leq C(k,n) \int_{ \substack{y^S_1> \ldots >y^S_k \\y^{S^c}_1> \ldots >y^{S^c}_{n-k}  \\ \max_i |y_i-x_i| \leq K\sqrt{s}} }  
  Q_s\bigl( \x^{[1,k]}, \y^{S}\bigr) Q_s\bigl( \x^{[k+1,n]}, \y^{S^c}\bigr) \;\rd{\y} \\
  &= C(k,n) \int_{ \substack{y^S_1> \ldots >y^S_k   \\ \max_i |y^S_i-x^S_i| \leq K\sqrt{s}} }
  Q_s\bigl( \x^{[1,k]}, \y^{S}\bigr) \;\rd{\y^S} \\
  &\qquad\qquad\times \int_{ \substack{y^{S^c}_1> \ldots >y^{S^c}_{n-k}  \\ \max_i |y^{S^c}_i-x^{S^c}_i| \leq K\sqrt{s}}  }
  Q_s\bigl( \x^{[k+1,n]}, \y^{S^c}\bigr) \;\rd{\y^{S^c}} \\
  &\leq C(k,n) \int_{ \substack{y^S_1> \ldots >y^S_k   \\ \max_i |y^S_i-x^S_i| \leq K\sqrt{s}} }
  Q_s\bigl( \x^{[1,k]}, \y^S\bigr) \;\rd{\y^S}.
\end{align*}
Denote the elements of $S$ as $S(1) > S(2) > \cdots > S(k)$ and the $i$th component of $\x^S$ as $x_{S(i)} = x_i^S$. 
Since $S \neq [1,k]$ there must exists at least one $j \in \{1,\ldots, k\}$ such that $S(j) \in [k+1,n]$.
Suppose that $\z\in\R^k$ satisfies $ |z_i-x^S_i| \leq K\sqrt{s}$ for all $i=1,\ldots k$ and recall that $2K\sqrt{s} \leq \delta$. Then we can estimate
\begin{equation*}
|z_j -x_j| \geq |x^S_j- x_j| -  |z_j -x^S_j|\geq \delta-  K\sqrt{s}  \geq  K\sqrt{s}.
\end{equation*}
Consequently, the last displayed integral above is bounded above by 
\begin{equation*}
 C(k,n) \int_{\substack{ z_1> \ldots >z_k \\ |z_j -x_j| \geq K\sqrt{s}} } Q_s\bigl( \x^{[1,k]}, \z\bigr) \;\rd{\z} 
\end{equation*}
Now $Q_s\bigl( \x^{[1,k]}, \cdot \bigr)$ is the density of the ordered eigenvalues of the matrix $M=D+\sqrt{s}G$ where $D$ diagonal matrix with entries ${\x}^{[1,k]}$ and $G$ is a $k\times k$ GUE matrix.
By Weyl's eigenvalue inequality \cite[Theorem III.2.1]{Bh97} the $i$th eigenvalue of $M$ lies in the interval $[x_i+\sqrt{s}\lambda_{min}^{(k)}, x_i+ \sqrt{s}\lambda_{max}^{(k)}]$ where $\lambda_{min}^{(k)}$ and $\lambda_{max}^{(k)}$ denote the smallest and the largest eigenvalue of $G$ respectively. 
Thus, we obtain
\begin{equation*}
  |R(h,s,K,\x,S)| \leq  C(k,n) \mathbb{P}\bigl[ \min(\lambda_{min}^{(k)}, -\lambda_{max}^{(k)} ) \geq K\bigr].
\end{equation*}

We turn to bounding $Q(h,s,K,\x)$ from below.  
For $\y$ in the region of integration we have
\begin{align*}
  \frac{\prod_{i\in [1,k], j\in [k+1,n]} (y_i-y_j)}{ \prod_{i\in [1,k], j\in [k+1,n]}(x_i-x_j)}  
  &\geq \prod_{i\in [1,k], j\in [k+1,n]}\left( \frac{x_i-x_j-2K\sqrt{s})}{x_i-x_j} \right) \\
  &\geq \left(1- \frac{2K\sqrt{s}}{\delta}\right)^{k(n-k)} \\
  &=: C(K,s,\delta,k,n).
\end{align*}
Then
\begin{align*}
  C(&K,s,\delta,k,n)^{-1} Q(h,s,K,\x) \\
  &\geq \int_{ \substack{y_1> \ldots >y_n \\ \max_i |y_i-x_i| \leq K\sqrt{s}} }  
  Q_s\bigl( \x^{[1,k]},\ \y^{[1,k]}\bigr) Q_s\bigl( \x^{[k+1,n]}, \y^{[k+1,n]}\bigr)
  1_{(-h,h)^n}(\y) \;\rd{\y} \\
  &= \int_{ \substack{y_1> \ldots >y_k   \\ \max_{i\in[1,k]} |y_i-x_i| \leq K\sqrt{s}} }
  Q_s\bigl( \x^{[1,k]},\y^{[1,k]}\bigr) 1_{(-h,h)^k} (\y^{[1,k]}) \;\rd{\y^{[1,k]}} \\
  &\qquad\times \int_{ \substack{y^{}_{k+1}> \ldots >y_{n}  \\ \max_{i\in[k+1,n]} |y_i-x_i| \leq K\sqrt{s}} }
  Q_s\bigl( \x^{[k+1,n]}, \y^{[k+1,n]}\bigr) 1_{(-h,h)^{n-k}} (\y^{[k+1,n]}) \;\rd{\y^{[k+1,n]}}
\end{align*}
where the integral factorises by virtue of the inequality $2K\sqrt{s} \leq \delta$. 
We now bound each of these two resulting integrals. 
Assume that $2\sigma \sqrt{s} \leq \delta$.  
Considering the first factor, and applying Weyl's inequality to the matrix $M$ as before, noting that $x_1-\sigma \sqrt{s} \leq h$ and $x_k - K\sqrt{s}\geq -h$, gives a lower bound of 
\begin{equation}
  \mathbb{P}\left[  \lambda_{max}^{(k)} \leq -\sigma \text{ and } \lambda_{min}^{(k)} \geq -K \right].
  \label{eq:lowerBound1}
\end{equation}
Similarly, noting that $x_{k+1} + K\sqrt{s} \leq h$ and $x_n + \sigma\sqrt{s} \geq -h$, the second factor is bounded below by 
\begin{equation}
  \mathbb{P}\left[ \lambda_{max}^{(n-k)} \leq K \text{ and } \lambda_{min}^{(n-k)} \geq \sigma \right].
  \label{eq:lowerBound2}
\end{equation}
Thus, we obtain the following lower bound,
\begin{multline*}
  Q(h,s,K,\x) \geq 
  C(K,s,\delta,k,n) \mathbb{P}\left[  \lambda_{max}^{(k)}  \leq -\sigma \text{ and } \lambda_{min}^{(k)} \geq -K \right] \\
  \times \mathbb{P}\left[ \lambda_{max}^{(n-k)}  \leq K\text{ and } \lambda_{min}^{(n-k)} \geq \sigma \right].
\end{multline*}

We can make $\mathbb{P}\left[ \lambda_{max}^{(n-k)} \geq K\right]$ and $\mathbb{P}\left[  \lambda_{min}^{(k)} \leq- K\right]$ arbitrarily small, simultaneously for all $k=1,2, \ldots n$, by choosing $K$ large enough. 
Then, for a chosen $K$, $C(K,s,\delta,k,n)$ can be made arbitrarily close to $1$ by choosing any sufficiently small $s$, and moreover the desired inequality $2 K\sqrt{s} \leq \delta$, will also hold for all sufficiently small $s$ too.  

In the case $k=0$, there is only one term in \eqref{eq:Laplace} and 
\begin{align*}
  Q(h,s,K,\x) 
  &= \int_{ \substack{y_{1}> \ldots > y_{n} \\ \max_{i\in[1,n]} |y_i-x_i| \leq K\sqrt{s}} }
  Q_s\bigl( \x^{[1,n]}, \y^{[1,n]}\bigr) 1_{(-h,h)^{n}} (\y^{[1,n]}) \;\rd{\y^{[1,n]}}.
\end{align*}
By the same reasoning as above and noting that in this case $x_1 - x_0 > \delta$ so that $x_1 + K\sqrt{s} \leq h$ and $x_n + \sigma\sqrt{s} \geq -h$, we have the lower bound \eqref{eq:lowerBound2}.
The same argument applies in the case $k=n$ to obtain the lower bound \eqref{eq:lowerBound1}.
Thus we have shown that given any $\varepsilon>0$ there exists a $s_0>0$ depending on $\varepsilon$, $n$, $h$ and $\sigma$ alone so that
\begin{equation*}
  Q(h,s,\x) + \varepsilon  \geq \beta(n,\sigma)>0,
\end{equation*}
for all $s<s_0$ and $\x$ satisfying $h + \sigma\sqrt{s} \geq x_1 \geq \ldots \geq x_n \geq -h - \sigma \sqrt{s}$, where 
\begin{equation*}
  \beta(n,\sigma) = \min_{k=0,1,\ldots n} \mathbb{P}\left[ \lambda_{min}^{(n-k)} \geq \sigma \right]  {\mathbb P}\left[ \lambda_{max}^{(k)}  \leq -\sigma \right]  
\end{equation*}
with the interpretation that one of the probability on the right hand side is equal to 1 if $k$ or $n-k$ is zero. 

We now deduce the statement of the lemma from the above.
Set $\sigma = 1$, $\varepsilon = \beta(n,1)/2$ and fix $h_0>0$ then there is a $s_0 = s_0(h_0,n)$ such that for all $s<s_0$ and $\x\in (-h_0-\sqrt{s}, h_0+\sqrt{s})^n$ we have
\begin{equation}
  Q(h_0,s,\x) \geq \frac{1}{2}\beta(n,1).
  \label{eq:QLowerBound}
\end{equation}
Now fix $M>0$, $t>0$ then for all $m\geq 2M^2/t$ and $s\in[t/2m,t/m]$ we have $\sqrt{s} \geq M/m$.
Therefore, the infimum of $Q(h_0,s,\x)$ over $x\in(-h_0-M/m, h_0+M/m)^n$ is larger than the infimum of the same quantity over $\x\in(-h_0-\sqrt{s}, h_0+\sqrt{s})^n$.
Hence, choosing $m_0$ such that $m_0 \geq 2M^2/t$ and $t/m_0 \leq s_0$ we have that the inequality \eqref{eq:QLowerBound} holds for all $m\geq m_0$, $s\in [t/2m,t/m]$ and $\x\in(-h_0-M/m,h_0+M/m)^n$.
Moreover, if $h > h_0$ then $\delta = 2h/(n+2) > \delta_0 = 2h_0/(n+2)$ and thus with the same $s_0$ and hence the same $m_0 = m_0(h_0,M,n,t)$ as above, the inequality \eqref{eq:QLowerBound} holds with $h$ in place of $h_0$ for all $m\geq m_0$, $s\in[t/2m,t/m]$ and $\x\in (-h-M/m, h+M/m)^n$.
\end{proof}

\begin{lemma}
  Let $\beta$ be the constant in Lemma \ref{lem:positiveLemma1}.
  Let $t>0$, $M>0$ and $h>h_0>0$ be such that $(-h,h)\subseteq (-2M,2M)$ and let $M_n^g$ be the solution to (\ref{eq:MnBounded}) with initial data $g = 1_{(-h,h)^n}$.
  Then, there exists an $m_0 := m_0(h_0,M,n,t)$ such that for all $m\geq m_0$ 
  \[
    \mathbb{P}\Big[ M_n^g(s,\y) \geq \frac{\beta}{2} 1_{(-h-M/m,h+M/m)^n}(\y) \text{ for all } s\in[t/2m, t/m] \text{ and } \y\in\mathbb{R}^n \Big] \geq 1 - \delta(m),
  \]
  where $\delta(m)$ is such that $(1-\delta(m))^m \to 1$ as $m\to\infty$.
  \label{lem:positiveInductionStep}
\end{lemma}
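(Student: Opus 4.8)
The plan is to combine the deterministic lower bound from Lemma \ref{lem:positiveLemma1} with a uniform control on the stochastic integral term $I_n(s,y)$ over the time window $[t/2m, t/m]$. Write $M_n(s,y) = J_n(s,y) + I_n(s,y)$ as in \eqref{eq:MnBounded}, where here $J_n(s,y) = \frac{1}{n!}\int_{\mathbb{R}^n} g(y')Q_s(y,y')\,\rd{y'}$ with $g = 1_{(-h,h)^n}$. First I would apply Lemma \ref{lem:positiveLemma1} (after extending the integral from $W_n$ to $\mathbb{R}^n$ by symmetry, which introduces the harmless factor $1/n!$ already built into the definitions) to get that for all $m$ large enough, all $s\in[t/2m,t/m]$ and all $y\in\mathbb{R}^n$,
\[
  J_n(s,y) \geq \beta\, 1_{(-h-M/m,\,h+M/m)^n}(y).
\]
It then suffices to show that the ``bad event'' on which $\sup_{s\in[t/2m,t/m]}\sup_{y\in\mathbb{R}^n}|I_n(s,y)| > \beta/2$ has probability at most $\delta(m)$ with $(1-\delta(m))^m\to 1$, i.e. $\delta(m) = o(1/m)$; on the complement of this event we get $M_n(s,y) \geq \beta - \beta/2 = \beta/2$ on the support of the indicator, which is the claim.

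The heart of the argument is therefore a quantitative tail bound on $\sup_{s,y}|I_n(s,y)|$ for $s$ in a short time interval near zero. I would proceed as follows. Using the moment estimate \eqref{eq:MnPthMoment} (the initial data $g = 1_{(-h,h)^n}$ is bounded, so $K_{p,g}\le 1$) together with Lemma \ref{lem:integral} and Lemma \ref{lem:integralofKSquared}, one gets
\[
  \Vert I_n(s,y)\Vert_p^2 \leq A_n^2 c_p^2 \int_0^s \int_{\mathbb{R}} \Big(\int_{\mathbb{R}^{n-1}} Q_{s-r}(y,y')\Vert M_n(r,y')\Vert_p\,\rd{y_*'}\Big)^2 \rd{y_1'}\,\rd{r} \leq C\, c_p^2\, \sqrt{s} \leq C\, c_p^2 \sqrt{t/m},
\]
uniformly in $y$, for $s\le t/m$. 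The same estimates applied to the increments $I_n(s,y) - I_n(s',y')$, using Theorem \ref{thm:DysonKernelCty} exactly as in Proposition \ref{prop:MnItermCty} and the spatial-increment bound from Lemma \ref{lem:MnJtermCty}, give Hölder-type control of the form $\Vert I_n(s,y) - I_n(s',y')\Vert_p \leq C c_p (|s-s'|^{\alpha/2} + |y-y'|^{1/2})$ on the box $[0,t/m]\times[-M',M']^n$. Feeding these two ingredients into a chaining / Kolmogorov-type tail estimate (e.g. a Garsia--Rodemich--Rumsey argument, or directly Markov's inequality at a single point combined with a union bound over a fine mesh and the modulus of continuity) yields, for $p$ taken large depending on $n$,
\[
  \mathbb{P}\Big[\sup_{s\in[0,t/m]}\ \sup_{y\in[-M',M']^n} |I_n(s,y)| > \tfrac{\beta}{2}\Big] \leq \frac{C (t/m)^{p/4}}{(\beta/2)^p} =: \delta(m),
\]
where the restriction of $y$ to a compact box is legitimate because outside a box of size comparable to $2M$ both $J_n$ and the relevant indicator vanish and $I_n$ can be controlled by the same estimate with rapidly decaying $Q_s$; I would spell this out by noting $\Vert M_n(r,y')\Vert_p$ is uniformly bounded and $Q_{s-r}(y,\cdot)$ concentrates near $y$, so the contribution is small for $|y|$ large (this uses the Gaussian tails of $Q$ from \eqref{eq:supHCIntegrand}). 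Since $p/4 > 1$ for $p$ large, $\delta(m) = C' m^{-p/4}$ and $(1-\delta(m))^m = (1 - C'm^{-p/4})^m \to 1$ as $m\to\infty$.

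The main obstacle I anticipate is getting the tail bound on the \emph{supremum} over the whole strip $[t/2m, t/m]\times\mathbb{R}^n$ (not just a fixed point, and not just a compact box) with a rate $\delta(m)$ that beats $1/m$; the single-point Markov bound $\mathbb{P}[|I_n(s,y)| > \beta/2] \le C c_p^p (t/m)^{p/4}(\beta/2)^{-p}$ has the right shape, but one must carefully control the passage to the supremum via the continuity estimates without losing powers of $m$, and one must handle the non-compactness in $y$. This is exactly the place where one leans on Theorem \ref{thm:DysonKernelCty}, Lemma \ref{lem:integralofKSquared}, and the uniform-in-space $p$th moment bound \eqref{eq:MnPthMoment}; everything else is bookkeeping. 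Once $\delta(m) = o(1/m)$ is established, the decomposition $M_n = J_n + I_n$ and the deterministic bound of Lemma \ref{lem:positiveLemma1} close the proof immediately.
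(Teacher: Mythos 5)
Your overall strategy is the one the paper follows: decompose $M_n = J_n + I_n$, use Lemma \ref{lem:positiveLemma1} to get $J_n(s,y)\geq\beta\,1_{(-h-M/m,h+M/m)^n}(y)$ on the window $[t/2m,t/m]$, and then bound the probability that $\sup|I_n|$ exceeds $\beta/2$ by Chebyshev at a large power $p$ combined with the continuity estimates (the paper uses the quantitative modulus bound from Kolmogorov's criterion together with $I_n(0,\cdot)\equiv 0$ to extract the factor $(t/m)^{\theta p}$, then takes $p=4(n+1)/\alpha$, $\theta=\alpha/2$, giving $\delta(m)\sim m^{-(n+1)}=o(1/m)$). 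Your claimed rate $(t/m)^{p/4}$ is slightly optimistic: chaining over the $(n+1)$-dimensional parameter box forces $\theta<\alpha-\tfrac{n+1}{p}$ with $\alpha<1/4$, so one gets $(t/m)^{\theta p}$ rather than $(t/m)^{p/4}$; since $p$ is at your disposal this is immaterial, and the rest of your moment and increment bookkeeping (Lemma \ref{lem:integral}, Lemma \ref{lem:integralofKSquared}, Theorem \ref{thm:DysonKernelCty}, Proposition \ref{prop:MnItermCty}, $K_{p,g}\le 1$) matches the paper.

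The genuine soft spot is your treatment of $y$ outside the enlarged box. For $y\notin(-h-M/m,h+M/m)^n$ the event in the lemma requires $M_n(s,y)\geq 0$, and on the complement of your bad event you only obtain $M_n\geq J_n-\beta/2\geq-\beta/2$ there, so your conclusion ``which is the claim'' does not cover those $y$. Moreover, your proposed justification for restricting to a compact box --- that $I_n$ is small for large $|y|$ because $Q$ has Gaussian tails and $\Vert M_n(r,\cdot)\Vert_p$ is uniformly bounded --- does not work as stated: uniform boundedness of the moments gives no decay in $y$ (decay would require propagating the compact support of $g$ through the Picard iteration), and in any case no smallness of $I_n$ can produce the sign condition $M_n\geq 0$. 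The missing ingredient is the weak comparison principle already established (Lemma \ref{lem:weakComparison} with $g_2\equiv 0$, equivalently (\ref{eq:MnDeltaPositive})): since $g\geq 0$, $M_n\geq 0$ for all $(s,y)$ almost surely, so the event can only fail for $y$ in the box and the supremum of $|I_n|$ may be taken over the compact set $(-h-M/m,h+M/m)^n\subset[-3M,3M]^n$ from the outset --- which is exactly what the paper does. With that one-line fix, your argument closes and coincides with the paper's proof.
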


\begin{proof}
  Let $\beta$ be as in Lemma \ref{lem:positiveLemma1} and let $M>0$, $t>0$, $h>h_0>0$ be given, then by Lemma \ref{lem:positiveLemma1} there exist an $m_0 = m_0(h_0,M,n,t)$ such that for all $m\geq m_0$, all $s\in [t/2m,t/m]$ and $\y\in\mathbb{R}^n$
  \[
    J_n(s,\y) \geq \beta 1_{(-h-M/m,h+M/m)^n}(\y).
  \]
  Since $J_n$ is deterministic, we have
  \begin{align}
    \mathbb{P} \Big[ & M_n^g(s,\y) < \frac{\beta}{2} 1_{(-h-M/m,h+M/m)^n}(\y) \text{ for some } s\in [t/2m,t/m] \text{ and } \y\in \mathbb{R}^n \Big] \notag \\
    &\leq \mathbb{P} \Big[ I_n(s,\y) < -\frac{\beta}{2} 1_{(-h-M/m,h+M/m)^n}(\y) \text{ for some } s\in [t/2m,t/m] \text{ and } \y\in \mathbb{R}^n \Big] \notag \\
    &\leq \mathbb{P} \left[ \sup_{\substack{s\in [t/2m,t/m]\\ \y\in (-h-M/m,h+M/m)^n}} | I_n(s,\y) | > \frac{\beta}{2} \right] \notag \\
    &\leq \left( \frac{\beta}{2} \right)^{-p} \mathbb{E} \left[  \sup_{\substack{s\in [t/2m,t/m]\\ \y\in (-h-M/m,h+M/m)^n}} | I_n(s,\y) |^p \right] \notag \\
    &\leq \left( \frac{\beta}{2} \right)^{-p} \mathbb{E} \left[  \sup_{(s,\y) \in [t/2m,t/m]\times[-3M,3M]^n} | I_n(s,\y) |^p \right],
    \label{eq:supPthMoment}
  \end{align}
  for all $p\geq 2$ by Markov's inequality.
  We shall bound the final expectation.
  Fix $\theta \in \big(0,\frac{1}{4} - \frac{n+1}{p} \big)$ then since $I_n(0,\y) \equiv 0$ for all $\y$, we have
  \begin{align}
    \mathbb{E}\left[ \sup_{\substack{s\in [t/2m,t/m]\\ \y\in [-3M,3M]^n}} \left| \frac{I_n(s,\y)}{(t/m)^{\theta}} \right|^p \right]
    &\leq \mathbb{E} \left[ \sup_{\substack{s\in [t/2m,t/m] \\ \y\in [-3M,3M]^n}} \left| \frac{ I_n(s,\y) - I_n(0,\y) }{s^{\theta}} \right|^p \right] \notag \\
    &\leq \mathbb{E} \left[ \sup_{\substack{s,s^\prime\in [0,t/m], s\neq s^\p \\ \y \in [-3M,3M]^n}} \left| \frac{I_n(s,\y) - I_n(s^\prime,\y)}{ |s-s^\prime|^{\theta} } \right|^p \right].
      \label{eq:KolmogorovBound}
  \end{align}
  Recall that Kolmogorov's continuity criterion (see \cite[Chapter I, Theorem 2.1]{RY98}) states that for a stochastic process $(X(\mb{t}):\mb{t}\in[0,T]^d)$, if there exist strictly positive constants $C$, $\alpha$ and $p$ with $\alpha p>d$ such that
  \[
    \Vert X(\mb{s}) - X(\mb{t}) \V_p \leq C |\mb{s}-\mb{t}|^\alpha, \quad\text{for all } \mb{s},\mb{t}\in[0,T]^d,
  \]
  then $X$ has a H\"older continuous modification which satisfies for all $\theta\in[0,\alpha-d/p)$,
  \begin{equation}
    \left\V \sup_{\substack{\mb{s}\neq \mb{t} \\ \mb{s},\mb{t}\in[0,T]^d}} \frac{|X(\mb{s})-X(\mb{t})|}{|\mb{s}-\mb{t}|^\theta} \right\V_p \leq C T^{\alpha-\theta} \frac{2^{\theta+1} 2^{d/p}}{1 - 2^{d/p} 2^{-(\alpha-\theta)}}.
    \label{eq:modulusOfCty}
  \end{equation}
  Note that for $\theta$ fixed, the right hand side of (\ref{eq:modulusOfCty}) is bounded for all $p\geq 2$.
 
  From the proof of Proposition \ref{prop:MnItermCty} we see that for all $p\geq 2$ there is a constant $C := C(n)$ such that for all $(s,\y)$, $(s^\p,\y^\p) \in[0,t/m]\times[-3M,3M]^n$,
  \begin{equation}
    \V I_n(s,\y) - I_n(s^\p,\y^\p) \V_p \leq Cc_p \sup_{\substack{s\in[0,t/m] \\ \y\in[-3M,3M]^n}} \V M_n^g(s,\y)\V_p \big(|s-s^\prime|^{1/4} + |\y-\y^\p|^{1/2}\big).
    \label{eq:KolmogorovCheck}
  \end{equation}
  Then by Kolmogorov's continuity criterion, for $p > 4(n+1)$ there is a constant $K^\p := K^\p(M,m,n,t)$ such that (\ref{eq:KolmogorovBound}) is bounded by 
  \[
    (K^\p)^p c_p^p \sup_{\substack{s\in[0,t/m] \\ \y\in[-3M,3M]^n}} \V M_n^g(s,\y)\V_p^p \leq (4K^\p\sqrt{p})^p e^{Ap^3 t/m},
  \]
  for a constant $A$ depending only on $n$, where to obtain the inequality we have used the moment bound \eqref{eq:MnPthMoment} and the fact that $g\leq 1$ and $c_p \leq 2\sqrt{p}$.
  Furthermore, if $m > m_0\wedge t$ then $t/m\leq 1$ and thus for such $m$ we can, by the explicit bound on the right hand side (\ref{eq:modulusOfCty}), replace the constant $K^\p$ in the previous display with a constant $K := K(M,n)$. 
  Consequently, for all $p > 4(n+1)$ 
  \begin{align*}
    \left(\frac{\beta}{2}\right)^{-p} \mathbb{E}\left[ \sup_{\substack{s\in[t/2m,t/m] \\ \y\in[-3M,3M]}} |I_n(s,\y)|^p \right] 
    &\leq \left(\frac{8K\sqrt{p}}{\beta} \left(\frac{t}{m}\right)^\theta \right)^p e^{A p^3t/m} \\
    &\leq \exp\left( \frac{A p^3 t}{m} + p\log(8K\beta^{-1}t^\theta \sqrt{p}) - p\theta\log(m) \right).
  \end{align*}
Choose $p = 8(n+1)$ and $\theta\in(\frac{1}{p},\frac{1}{8})$ and for such choice denote the exponential in the last line above by $\delta(m)$, then for $m$ large, $\delta(m) \approx \exp(-\log(m^{\rho(n+1)}))$ with $\rho = 8\theta >1/(n+1)$ and therefore
  \[
    (1-\delta(m))^m \approx \bigg( 1 - \frac{1}{m^{\rho(n+1)}} \bigg)^m \to 1, \quad\text{as } m\to\infty,
  \]
  for all $n\geq 1$ as required.
\end{proof}

We are now ready to prove the main result of this section.

\begin{proof}[Proof of Theorem \ref{thm:strongComparision}]
  By linearity, $M_n^{1} - M_n^{2}$ is the solution to (\ref{eq:MnBounded}) with initial data $g_1-g_2$ and so it suffices to prove that $\mathbb{P}[ M_n^g(t,\y) > 0 \text{ for all } t>0 \text{ and }\y\in\mathbb{R}^n ] = 1$, for $g$ such that $g\geq 0$ and $g(\y) > 0$ for some $\y\in\mathbb{R}^n$ almost surely.
  
  Since $g$ is continuous by assumption one can find constants $c>0$, $d>0$ small enough such that $g(\x) \geq c\prod_{i=1}^n 1_{(y_i-d,y_i+d)}(\x)$ for all $\x\in\R^n$.
  Without loss of generality, we can assume $c=1$ and take $\y$ to be the origin for convenience.
  By the weak comparision principle (Lemma \ref{lem:weakComparison}), we can assume that the initial data is $g(\cdot) = 1_{(-d,d)^n}(\cdot)$. 

  Let $\gamma = \beta/2$ where $\beta$ is the constant in Lemma \ref{lem:positiveLemma1}. 
  Fix $t>0$ and $M>0$ such that $(-d,d) \subset (-M,M)$.
  For $k=1,\ldots,m$, define the events
  \begin{align*}
    & A_k := \left\{ M_n^g(s,\y) \geq \gamma^{k} 1_{(-d-\frac{Mk}{m}, d+\frac{Mk}{m})^n}(\y) \text{ for all } s\in \left[ \frac{(2k-1)t}{2m}, \frac{kt}{m} \right] \text{ and } \y\in\mathbb{R}^n \right\},
  \end{align*}
  and for $k=2,\ldots,m$ the events
  \begin{align*}
    B_1 &:= \left\{ M_n^g(t/2m,\y) \geq \gamma 1_{(-d-\frac{M}{m},d+\frac{M}{m})^n}(\y) \text{ for all } \y\in\mathbb{R}^n \right\} \\
    B_k &:= \\
    \;\; &\left\{ M_n^g(s,\y) \geq \gamma^{k} 1_{(-d-\frac{Mk}{m}, d+\frac{Mk}{m})^n}(\y) \text{ for all } s\in \left[ \frac{(k-1)t}{m}, \frac{(2k-1)t}{2m} \right] \text{ and } \y\in\mathbb{R}^n \right\}.
  \end{align*}
  We consider first the sets $A_k$.
  By Lemma \ref{lem:positiveInductionStep}, there is an $m_0 = m_0(d,M,n,t)$ such that for all $m\geq m_0$ there is a $\delta(m)$ such that
  \[
    \mathbb{P}[A_1] \geq 1 - \delta(m).
  \]
  Now assume that $A_1 \cap\cdots\cap A_{k-1}$ occurs.
  On the event $A_{k-1}$ we have $M_n^g((k-1)t/m,\y) \geq \gamma^{k-1}1_{(-d-M(k-1)/m, d+M(k-1)/m)^n}(\y)$ for all $\y\in\mathbb{R}^n$ almost surely.
  Define a time shifted white noise by $\dot{W}^k(s,\y) = \dot{W}((k-1)t/m + s,\y)$.
  Let $M_n^k(s,\y)$ be the solution driven by the noise $\dot{W}^k$ with initial data given by $\gamma^{k-1} 1_{(-d-M(k-1)/m, d+M(k-1)/m)^n}(\y)$.
  On the event $A_{k-1}$, by the weak comparison principle, $M_n^g( (k-1)t/m + s,\y) \geq M_n^k(s,\y)$ for all $s\geq 0$ and $\y\in\mathbb{R}^n$ almost surely.
It is easy to see that $\tilde{M}_n^k(s,\y) := \gamma^{-(k-1)}M_n^k(s,\y)$ is the solution to (\ref{eq:MnBounded}) with initial data $1_{(-d-M(k-1)/m, d+M(k-1)/m)^n}(\y)$.
  Lemma \ref{lem:positiveInductionStep} applied to $\tilde{M}_n^k$ with $h_0 = d$ and $h = d + M(k-1)/m$ shows that with the same $m_0$ and $\delta(\cdot)$ as above that for all $m\geq m_0$
  \[
    \mathbb{P}\left[ \tilde{M}_n^k(s,\y) \geq \gamma 1_{(-d-\frac{Mk}{m}, d+\frac{Mk}{m})^n}(\y) \text{ for all } s\in\left[ \frac{t}{2m}, \frac{t}{m} \right] \text{ and } \y\in\mathbb{R}^n \right] \geq 1 - \delta(m),
  \]
  and hence
  \[
    \mathbb{P}\left[ M_n^k(s,\y) \geq \gamma^{k} 1_{(-d-\frac{Mk}{m}, d+\frac{Mk}{m})^n}(\y) \text{ for all } s\in\left[ \frac{t}{2m}, \frac{t}{m} \right] \text{ and } \y\in\mathbb{R}^n \right] \geq 1 - \delta(m).
  \]
  By the above discussion, this implies that
  \[
    \mathbb{P}[A_k | A_1 \cap\cdots\cap A_{k-1}] \geq 1 - \delta(m) \quad\text{for } 1\leq k\leq m.
  \]
  Now since $A_1 \subseteq B_1$, $\mathbb{P}[B_1] \geq 1-\delta(m)$ and then proceeding in the same manner as before, we have
  \[
    \mathbb{P}[B_k | B_1 \cap\cdots\cap B_{k-1}] \geq 1 - \delta(m) \quad\text{for } 1\leq k\leq m.
  \]
  Finally, by the union bound
  \begin{align*}
    \mathbb{P}\left[ \bigcap_{k=1}^{m} A_k \cap \bigcap_{k=1}^{m} B_k \right] 
      &= 1 - \mathbb{P}\left[ \left( \bigcap_{k=1}^{m} A_k \right)^c \cup \left( \bigcap_{k=1}^{m} B_k \right)^c \right] \\
      &\geq 1 - \left( 1 - \mathbb{P}\left[ \bigcap_{k=1}^{m} A_k \right] \right) - \left( 1 - \mathbb{P}\left[ \bigcap_{k=1}^{m} B_k \right] \right) \\
      &\geq 2\big( 1 - \delta(m) \big)^m - 1.
  \end{align*}
  Since $(1-\delta(m))^m \to 1$ as $m\to\infty$, we conclude that
  \begin{align*}
    \mathbb{P}\left[ M_n^g(s,\y) > 0 \text{ for all } s\in\left[ \frac{t}{2}, t \right] \text{ and } \y\in \left[-\frac{M}{2}, \frac{M}{2} \right]^n \right] 
      &\geq \lim_{m\to\infty} \mathbb{P}\left[ \bigcap_{k=1}^{m} A_k \cap \bigcap_{k=1}^{m} B_k \right] \\
      &= 1.
  \end{align*}
  Since $t>0$ and $M>0$ are arbitrary, this completes the proof in the case when the initial data $g$ is a continuous function.

  We now prove part (b) of the theorem; the everywhere strict positivity of a solution $M_n(t,\x,\y)$ to \eqref{eq:MnDelta}.
  We first prove that for all $\x\in\R^n$
  \begin{equation}
    \mathbb{P}[ M_n(t,\x,\y) > 0 \text{ for all } t>0 \text{ and } \y\in\mathbb{R}^n] = 1.
      \label{eq:yPositivity}
  \end{equation}
  Let $\dot{W}^\tau(s,\y) = \dot{W}(\tau + s,\y)$ be the time shifted white noise and let $M_n^\tau$ be the solution to \eqref{eq:MnBounded} driven by the noise $\dot{W}^\tau$ and with initial data $M_n(\tau,\x,\cdot)$.
  Recall that, see \eqref{eq:MnDeltaPositive}, $\mathbb{P}[M_n(t,\x,\y) \geq 0 \text{ for all } t > 0 \text{ and } \y\in\R^n] = 1$.
  If $\mathbb{P}[M_n(\tau,x,\y)>0 \text{ for some } \y] = 1$ then since $\y\mapsto M_n(\tau,\x,\y)$ is continuous by Theorem \ref{thm:MnMain} (b), the strong comparison principle for continuous initial data proved above applied to the solution $M_n^\tau$ shows that $\mathbb{P}[M_n^\tau(s,\x,\y) > 0 \text{ for all } s>0 \text{ all } \y\in\R^n] = 1$.
  On the other hand, if $\mathbb{P}[M_n(\tau,\x,\y) = 0 \text{ for all } \y] > 0$ then $M_n^\tau(s,\x,\y) = 0$ for all $s>0$ and $\y\in\R^n$ with strictly positive probability. 
  Since $\tau$ is arbitrary, this shows that for each $\x$, either the event $\{M_n(t,\x,\y) > 0 \text{ for all } t>0 \text{ and } \y\in\R^n\}$ or the event $\{M_n(t,\x,\y) = 0 \text{ for all } t>0 \text{ and } \y\in\R^n\}$ occurs. 
  Let $p(\x)$ denote the probability of the latter event.
  By \cite[Corollary 6.2]{OW11}, we have for all $t>0$ and $\x\in\R^n$ with probability one,
  \[
    M_n(2t,\x,\x) = \frac{1}{n!} \int_{\R^n} M_n(t,\x,\y) M_n^t(t,\y,\x) \Delta(\y)^2 \;\rd{\y},
  \]
  where $M_n(t,\x,\y)$ and $M_n^t(t,\y,\x)$ are independent and by Proposition \ref{prop:MnSymmetry} equal in distribution.
  If either of the two is identically zero then so is $M_n(2t,\x,\x)$ and since the event $\{M_n(2t,\x,\x) = 0\}$ and $\{M_n(t,\x,\y) = 0 \text{ for all } (t,\y)\}$ are equal up to a null set, this implies that $p(\x) \geq 2p(\x) - p(\x)^2$.
  Hence, $p(\x)$ is either 0 or 1 but $\mathbb{E}[M(t,\x,\y)] = J_n(t,\x,\y)$ which is non-zero and so $p(\x)$ must be equal to 0. 

  Fix $\tau>0$ then \eqref{eq:yPositivity} together with Proposition \ref{prop:MnSymmetry} shows that there is a set $\mathcal{N}$ of probability zero such that on its complement, $M_n$ is jointly continuous and $M_n(\tau,\x,0)>0$ for all $\x$.
  Define $c(\x) := M_n(\tau,\x,0)/2$ and $d(\x) = \inf\{|\y| : \y\in\R^n \text{ with } M_n(\tau,\x,\y) = c(\x)\}$, then on the complement of $\mathcal{N}$, $c(\x)$ and $d(\x)$ are strictly positive and $M_n(\tau,\x,\y) \geq c 1_{(-d,d)^n}(\y)$ for all $\x$, $\y\in\R^n$.
  For $N\geq 1$, define the random set $B_N := \{\x\in\R^n : c(\x) \geq 1/N, d(\x) \geq 1/N \}$ then $M_n(\tau,\x,\y) \geq (1/N) 1_{(-1/N, 1/N)^n}(\y)$ for all $\y$ and all $\x\in B_N$.
  The strict positivity result proved above applied to the solution with initial data $(1/N) 1_{(-1/N, 1/N)^n}(\y)$ together with the weak comparison principle implies that
  \[
    \mathbb{P}[ E_N ] := \mathbb{P}[ M_n(\tau + s, \x, \y) > 0 \text{ for all } s>0 \text{ and } \y\in\R^n, \x\in B_N ] = 1.
  \]
  On the set $\mathcal{N}^c$ we have $\bigcup_{N=1}^\infty B_N  = \R^n$ otherwise there exists an $\x\in\R^n$ such that either $c(\x)=0$ or $d(\x)=0$ which is a contradiction and therefore $\mathbb{P}[ \bigcap_{N=1}^\infty E_N ] = \mathbb{P}[ M_n(\tau + s, \x, \y) > 0 \text{ for all } s>0 \text{ and } \x,\y\in\R^n ] = 1$ as required.
\end{proof}

\def\MR#1{\href{http://www.ams.org/mathscinet-getitem?mr=#1}{MR#1}}

\providecommand{\bysame}{\leavevmode\hbox to3em{\hrulefill}\thinspace}

 
\end{document}